\newcommand{\Dchaintwo}[4]{
\rule[-3\unitlength]{0pt}{8\unitlength}
\begin{picture}(14,5)(0,3)
\put(1,2){\ifthenelse{\equal{#1}{l}}{\circle*{2}}{\circle{2}}}
\put(2,2){\line(1,0){10}}
\put(13,2){\ifthenelse{\equal{#1}{r}}{\circle*{2}}{\circle{2}}}
\put(1,5){\makebox[0pt]{\scriptsize #2}}
\put(7,4){\makebox[0pt]{\scriptsize #3}}
\put(13,5){\makebox[0pt]{\scriptsize #4}}
\end{picture}}
\def \To{\longrightarrow}
\def \dim{\operatorname{dim}}
\def \D{\operatorname{d}}
\def \gr{\operatorname{gr}}
\def \Hom{\operatorname{Hom}}
\def \Vec{\operatorname{Vec}}
\def \comod{\operatorname{comod}}
\def \mod{\operatorname{mod}}
\def \C{\mathbb{C}}
\def \N{\mathbb{N}}
\def \M{\mathrm{M}}
\def \D{\Delta}
\def \e{\varepsilon}
\def \M{\mathrm{M}}
\def \Z{\mathbb{Z}}
\def \k{\mathbbm{k}}
\def \1{\mathbf{1}}
\def \id{\operatorname{id}}
\def \GL{\operatorname{GL}}
\numberwithin{equation}{section}
\newtheorem{theorem}{Theorem}[section]
\newtheorem{lemma}[theorem]{Lemma}
\newtheorem{proposition}[theorem]{Proposition}
\newtheorem{corollary}[theorem]{Corollary}
\newtheorem{definition}[theorem]{Definition}
\newtheorem{example}[theorem]{Example}
\newtheorem{conjecture}[theorem]{Conjecture}
\newtheorem{remark}[theorem]{Remark}
\begin{document}

\title[Nondiagonal finite quasi-qantum groups]{On nondiagonal finite quasi-qantum groups \\ over finite abelian groups$^\dag$}\thanks{\tiny $^\dag$Supported by NSFC 11471186, 11571199 and 11701468.}

\subjclass[2010]{16T05, 18D10}

\keywords{quasi-quantum group, Nichols algebra, tensor category}

\author[H.-L. Huang]{Hua-Lin Huang}
\address{Huang: School of Mathematical Sciences, Huaqiao University, Quanzhou 362021, China}
\email{hualin.huang@hqu.edu.cn}

\author[Y. Yang]{Yuping Yang*}\thanks{*Corresponding author.}
\address{Yang: School of Mathematics and statistics, Southwest University, Chongqing 400715, China}
\email{yupingyang@swu.edu.cn}

\author[Y. Zhang]{Yinhuo zhang}
\address{Zhang: Department of Mathematics and statistics, University of Hasselt, Universitaire Campus, 3590 Diepenbeek, Belgium}
\email{yinhuo.zhang@uhasselt.be}

\date{}
\maketitle

\begin{abstract}
In this paper, we initiate the study of nondiagonal finite quasi-quantum groups over finite abelian groups. We mainly study the Nichols algebras in the twisted Yetter-Drinfeld module category $_{\k G}^{\k G}\mathcal{YD}^\Phi$ with $\Phi$ a nonabelian $3$-cocycle on a finite abelian group $G.$ A complete clarification is obtained for the Nichols algebra $B(V)$ in case $V$ is a simple twisted Yetter-Drinfeld module of nondiagonal type. This is also applied to provide a complete classification of finite-dimensional coradically graded pointed coquasi-Hopf algebras over abelian groups of odd order and confirm partially the generation conjecture of pointed finite tensor categories due to Etingof, Gelaki, Nikshych and Ostrik.
\end{abstract}

\section{Introduction}
This is a further contribution to the classification problem of finite quasi-quantum groups and pointed finite tensor categories over finite abelian groups beyond some previous works \cite{HLYY, HLYY2} by the first two authors jointly with Liu and Ye. Throughout, we work over an algebraically closed field $\k$ of characteristic zero. Unless stated otherwise, in this paper all spaces, maps, (co)algebras, (co)modules, and categories, etc., are over $\k.$

 In \cite{HLYY, HLYY2}, finite quasi-quantum groups of diagonal type are classified. A key observation is that the study of such algebras can be transformed to that of finite-dimensional pointed Hopf algebras over abelian groups. The latter has been successfully developed and featured with many powerful tools such as Nichols algebras, Weyl groupoids, and arithmetic root systems, see e.g. \cite{ AS, AS2, H0, H4}. In this paper, we initiate the investigation of finite quasi-quantum groups of nondiagonal type. In the following we use some concrete notations to provide more explicit explanations.

Once and for all, let $G$ be a finite abelian group and $\Phi$ be a $3$-cocycle on $G.$ A complete understanding of the Nichols algebras in the twisted Yetter-Drinfeld module category $_{\k G}^{\k G}\mathcal{Y}\mathcal{D}^{\Phi}$ is the crux for the classification of finite-dimensional pointed coquasi-Hopf algebras. A twisted Yetter-Drinfeld module $V \in {_{\k G}^{\k G}\mathcal{Y}\mathcal{D}^{\Phi}}$ is said to be of diagonal type, if it is a direct sum of $1$-dimensional twisted Yetter-Drinfeld modules. The associated Nichols algebra $B(V)$ and the coquasi-Hopf algebra $B(V)\#\k G $ are called diagonal if $V$ is so. It is shown in \cite{HLYY2} that, all $V \in {_{\k G}^{\k G}\mathcal{Y}\mathcal{D}^{\Phi}}$ are diagonal if and only if the $3$-cocycle $\Phi$ is abelian, and if and only if there exists a bigger finite abelian group $\mathbb{G}$ with canonical projection $\pi: \mathbb{G} \to G$ such that $\pi^* (\Phi)$ is a 3-coboundary on $\mathbb{G}.$ If this is the case, then the diagonal Nichols algebras in $_{\k G}^{\k G}\mathcal{Y}\mathcal{D}^{\Phi}$ can essentially be reduced to those in  $^{\k\mathbbm{G}}_{\k\mathbbm{G}} \mathcal{YD}^{\pi^* (\Phi)},$ and thus in $^{\k\mathbbm{G}}_{\k\mathbbm{G}} \mathcal{YD}$ as $\pi^* (\Phi)$ is a 3-coboundary. So to go further beyond \cite{HLYY2}, we shall consider the case with $\Phi$ nonabelian and $V \in {_{\k G}^{\k G}\mathcal{Y}\mathcal{D}^{\Phi}}$ nondiagonal.

To this end, in principle we need to develop a theory for the Nichols algebras of semisimple twisted Yetter-Drinfeld modules. The Hopf version of such a theory was developed in \cite{AHS}. However, at present it seems not easy to extend this theory to the quasi-Hopf case directly. As a trial step, firstly we study the Nichols algebras of semisimple twisted Yetter-Drinfeld modules with few summands. It turns out that if the number of summands is less than or equal to $2,$ then we are able to make a connection from this to the diagonal case. The main idea is to consider the support groups of such easy Yetter-Drinfeld modules and carry out the base group change as in our previous works \cite{HLYY, HLYY2}. More precisely, if $V \in {_{\k G}^{\k G}\mathcal{Y}\mathcal{D}^{\Phi}}$ is nondiagonal and has at most $2$ simple summands, then its support group $G_V$ is either a cyclic group or the direct product of two cyclic groups. Moreover, the Nichols algebra $B(V) \in {_{\k G}^{\k G}\mathcal{Y}\mathcal{D}^{\Phi}}$ is essentially nothing other than $B(V) \in {_{\k G_V}^{\k G_V}\mathcal{YD}^{\Phi|_{G_V}}}.$ In this situation, all $3$-cocycles on $G_V$ are abelian and then \cite{HLYY, HLYY2} can be applied.

Our first main result is a complete clarification of the Nichols algebra $B(V)$ when $V$ is a simple twisted Yetter-Drinfeld module of nondiagonal type. In particular, we provide an explicit necessary and sufficient condition on $V$ for $B(V)$ to be finite-dimensional. The same idea and process can be applied to $B(V)$ when $V$ is a direct sum of $2$ simple twisted Yetter-Drinfeld modules. As this will not provide more insights for our ultimate aim, we do not include a detailed discussion of this case. Instead, we present several simple examples to offer the reader some flavor. Surprisingly, the result on $B(V)$ with $V$ simple is already enough for us to achieve half of our final aim. Our second main result is a complete classification of finite-dimensional coradically graded pointed coquasi-Hopf algebras over abelian groups of odd order. The key observation is that $B(V) \in {_{\k G}^{\k G}\mathcal{Y}\mathcal{D}^{\Phi}}$ is infinite-dimensional for any simple nondiagonal twisted Yetter-Drinfeld module $V$ if the order of $G$ is odd. As an application, we also prove that any pointed finite tensor category over an abelian group of odd order is tensor generated by objects of length $2,$ which partially confirms the generation conjecture \cite[Conjecture 5.11.10.]{EGNO} of pointed finite tensor categories due to Etingof, Gelaki, Nikshych and Ostrik.

The paper is organized as follows. Section 2 is devoted to some preliminaries. In Section 3, we consider mainly the Nichols algebras in $ {_{\k G}^{\k G}\mathcal{Y}\mathcal{D}^{\Phi}}$ with $\Phi$ nonabelian. A full description of the Nichols algebra of a simple nondiagonal twisted Yetter-Drinfeld module is obtained. This is applied in Section 4 to the generation problem and a complete classification of finite-dimensional  pointed coquasi-Hopf algebras over finite abelian groups of odd order. Finally, in Section 5 we provide some further examples and problems of finite-dimensional  pointed coquasi-Hopf algebras over finite abelian groups of even order.

\section{Preliminaries}
In this section, we recall some necessary notions and basic facts on pointed finite tensor categories, pointed coquasi-Hopf algebras, twisted Yetter-Drinfeld modules, Nichols algebras, and arithmetic root systems. The reader is referred to \cite{EGNO, HLYY, HLYY2} for any unexplained concepts and notations.

\subsection{Pointed finite tensor categories and coquasi-Hopf algebras}
A finite tensor category is called pointed if every simple object is invertible. According to \cite{EO}, every pointed finite tensor category is tensor equivalent to the category of comodules of a finite-dimensional pointed coquasi-Hopf algebras.

Recall that a coquasi-Hopf algebra is a coalgebra $(\M,\D,\e)$ equipped with a
compatible quasi-algebra structure and a quasi-antipode. Namely,
there exist two coalgebra homomorphisms $$m: \M \otimes \M \To \M, \ a
\otimes b \mapsto ab \quad \text{and} \quad \mu: \k \To \M,\ \lambda \mapsto \lambda
1_\M,$$ a convolution-invertible map $\Phi: \M^{\otimes 3} \To \k$
called associator, a coalgebra antimorphism $S: \M \To \M$ and two
functions $\alpha,\beta: \M \To \k$ such that for all $a,b,c,d \in
\M$ the following equalities hold:
\begin{eqnarray*}
&a_1(b_1c_1)\Phi(a_2,b_2,c_2)=\Phi(a_1,b_1,c_1)(a_2b_2)c_2,\\
&1_\M a=a=a1_\M, \\
&\Phi(a_1,b_1,c_1d_1)\Phi(a_2b_2,c_2,d_2) =\Phi(b_1,c_1,d_1)\Phi(a_1,b_2c_2,d_2)\Phi(a_2,b_3,c_3),\\
&\Phi(a,1_\M,b)=\e(a)\e(b). \\
&S(a_1)\alpha(a_2)a_3=\alpha(a)1_\M, \quad a_1\beta(a_2)S(a_3)=\beta(a)1_\M, \label{2.11} \\
&\Phi(a_1,S(a_3),a_5)\beta(a_2)\alpha(a_4)
=\Phi^{-1}(S(a_1),a_3, S(a_5)) \alpha(a_2)\beta(a_4)=\e(a).
 \end{eqnarray*}
The triple $(S,\alpha,\beta)$ is called a quasi-antipode. $M$ is called a {\bf pointed coquasi-Hopf algebra} if $(\M,\D,\e)$ is a pointed coalgebra, i.e., every simple comodule of $M$ is $1$-dimensional.

Let $C$ be a coalgebra, the coradical $C_0$ of $C$ is the sum of all simple subcoalgebras of $C$.
Fix a coalgebra $C$ with coradical $C_0$, define $C_n$ inductively as follows: for each $n\geq 1$, define
$$C_n=\D^{-1}(C\otimes C_{n-1}+C_0\otimes C).$$
Then we get a filtration $C_0\subset C_1\subset \cdots \C_n\subset \cdots, $ which is called the coradical filtration of $C$. A coquasi-Hopf algebra also has a coradical filtration since every coquasi-Hopf algebra is a coalgebra.

Given a coquasi-Hopf algebra $(\M,\D, \e,
m, \mu, \Phi,S,\alpha,\beta),$ let $\{\M_n\}_{n \ge 0}$ be its
coradical filtration, and let $$\gr \M = \M_0 \oplus \M_1/\M_0 \oplus \M_2/\M_1
\oplus \cdots,$$ the corresponding coradically graded coalgebra. Then naturally
$\gr \M$ inherits from $\M$ a graded coquasi-Hopf algebra structure. The
corresponding graded associator $\gr\Phi$ satisfies
$\gr\Phi(\bar{a},\bar{b},\bar{c})=0$ for all homogeneous
$\bar{a},\bar{b},\bar{c} \in \gr \M$ unless they all lie in $\M_0.$
Similar conditions hold for $\gr\alpha$ and $\gr\beta.$ A coquasi-Hopf algebra $M$ is called {\bf coradically graded} if $M\cong \gr(M)$ as coquasi-Hopf algebras.

Here is an example with some useful terms and notations for our later investigations.

\begin{example}
Let $G$ be a group. Clearly the group algebra $\k G$ is a Hopf algebra with $\D(g)=g\otimes g,\; S(g)=g^{-1}$ and
$\e(g)=1$ for any $g\in G$. Let $\omega$ be a normalized $3$-cocycle on $G$, i.e.
\begin{eqnarray}
&\omega(ef,g,h)\omega(e,f,gh)=\omega(e,f,g)\omega(e,fg,h)\omega(f,g,h),\\
&\omega(f,1,g)=1
\end{eqnarray}
for all $e,f,g,h \in G$. By linearly extending, $\omega \colon (\k G)^{\otimes 3} \to \k$ becomes a
 convolution-invertible map. Define two linear functions
 $\alpha,\beta \colon \k G \to \k$ by \[ \alpha(g):=\e(g) \quad \text{and} \quad \beta(g):=\frac{1}{\omega(g,g^{-1},g)} \]
 for any $g\in G$. Then $kG$ together with these $\omega, \ \alpha$ and $\beta$
makes a coquasi-Hopf algebra, which will be written as $(\k G,\omega)$ in the following. By definition, the Gr-category $\Vec_G^\omega$ is just the category of comodules of $(\k G,\omega)$.
\end{example}

 It is well known that a pointed fusion category over $\k$ is equivalent to a Gr-category, see \cite{EGNO} for details. The crux to determine all the pointed fusion categories is to give a complete list of the representatives of the $3$-cohomology classes in $\operatorname{H}^3(G,\k^*)$ for all groups $G.$ However, when $G$ is a finite abelian group, the problem is solved in \cite{HLYY2}, and a list of the representatives of $\operatorname{H}^3(G,\k^*)$ can be given as follows.

Let $\N$ denote the set of nonnegative integers, $\Z$ the ring of integers, and $\Z_m$ the cyclic group of order $m.$ Any finite abelian group $G$ is of the form $\mathbb{Z}_{m_{1}}\times\cdots \times\mathbb{Z}_{m_{n}}$ with $m_j\in \mathbb{N}$
for $1\leq j\leq n.$  Denote by $\mathcal{A}$ the set of all $\N$-sequences
\begin{equation}\label{cs}(c_{1},\ldots,c_{l},\ldots,c_{n},c_{12},\ldots,c_{ij},\ldots,c_{n-1,n},c_{123},
\ldots,c_{rst},\ldots,c_{n-2,n-1,n})\end{equation}
such that $ 0\leq c_{l}<m_{l}, \ 0\leq c_{ij}<(m_{i},m_{j}), \ 0\leq c_{rst}<(m_{r},m_{s},m_{t})$ for $1\leq l\leq n, \ 1\leq i<j\leq n, \ 1\leq r<s<t\leq n$, where $c_{ij}$ and $c_{rst}$ are ordered in the lexicographic order of their indices. We denote by $\underline{\mathbf{c}}$ the sequence \eqref{cs}  in the following. Let $g_i$ be a generator of $\mathbb{Z}_{m_{i}}, 1\leq i\leq n$. For any $\underline{\mathbf{c}}\in \mathcal{A}$, define
\begin{eqnarray}\label{2.28}
&& \omega_{\underline{\mathbf{c}}}:\;G\times G\times G\To \k^{\ast} \notag \\
&&[g_{1}^{i_{1}}\cdots g_{n}^{i_{n}},g_{1}^{j_{1}}\cdots g_{n}^{j_{n}},g_{1}^{k_{1}}\cdots g_{n}^{k_{n}}] \mapsto \\ && \prod_{l=1}^{n}\zeta_{m_l}^{c_{l}i_{l}[\frac{j_{l}+k_{l}}{m_{l}}]}
\prod_{1\leq s<t\leq n}\zeta_{m_{t}}^{c_{st}i_{t}[\frac{j_{s}+k_{s}}{m_{s}}]}
\prod_{1\leq r<s<t\leq n}\zeta_{(m_{r},m_{s},m_{t})}^{c_{rst}k_{r}j_{s}i_{t}}. \notag
\end{eqnarray}
Here and below $\zeta_m$ stands for an $m$-th primitive root of unity.

\begin{proposition}\cite[Proposition 3.8]{HLYY2}\label{p2.26}   $\{\omega_{\underline{\mathbf{c}}} \mid \underline{\mathbf{c}}\in \mathcal{A}\}$ forms a complete set of representatives of the normalized $3$-cocycles on $G$ up to $3$-cohomology.
\end{proposition}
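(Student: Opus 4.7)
The plan is to establish the proposition in two steps: first verify each $\omega_{\underline{\mathbf{c}}}$ is a normalized $3$-cocycle, then show that the assignment $\underline{\mathbf{c}} \mapsto [\omega_{\underline{\mathbf{c}}}]$ is a bijection from $\mathcal{A}$ onto $H^3(G, \k^*)$. For the target cardinality, the standard divisibility argument ($\k^*$ being a divisible abelian group with torsion subgroup $\mathbb{Q}/\Z$) gives $H^3(G, \k^*) \cong H^4(G, \Z)$, which is accessible via the K\"unneth formula.

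\textbf{Step 1 (cocycle identity).} Decompose $\omega_{\underline{\mathbf{c}}} = \omega^{\mathrm{I}}\cdot \omega^{\mathrm{II}}\cdot \omega^{\mathrm{III}}$ according to the three kinds of factors in \eqref{2.28}. The type~I contribution $\prod_l \zeta_{m_l}^{c_l i_l[(j_l+k_l)/m_l]}$ is a product of the classical cyclic $3$-cocycles on each $\Z_{m_l}$; the cocycle identity is a short floor-function manipulation. The type~II factor $\zeta_{m_t}^{c_{st} i_t[(j_s+k_s)/m_s]}$ is a cup product of the character on $G$ sending $g_1^{i_1}\cdots g_n^{i_n}$ to $\zeta_{m_t}^{c_{st} i_t}$ with the standard $2$-cocycle on $\Z_{m_s}$ pulled back along $G\twoheadrightarrow \Z_{m_s}$, hence automatically a $3$-cocycle. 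The type~III factor $\zeta_{(m_r,m_s,m_t)}^{c_{rst} k_r j_s i_t}$ is visibly trilinear in $(k_r, j_s, i_t)$, and the cocycle identity collapses to a routine verification. Products of cocycles are cocycles, and normalization is immediate from the floor-function formulas.

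\textbf{Step 2 (bijectivity).} For injectivity I would recover the parameters from the class $[\omega_{\underline{\mathbf{c}}}]$ via canonical invariants. The totally alternating ratio
$$\prod_{\sigma\in S_3}\omega(g_{\sigma(r)},g_{\sigma(s)},g_{\sigma(t)})^{\operatorname{sgn}(\sigma)}$$
is a cohomology invariant whose evaluation on $\omega_{\underline{\mathbf{c}}}$ extracts $c_{rst}\pmod{(m_r,m_s,m_t)}$. After dividing out the type~III contribution, the restriction of $[\omega_{\underline{\mathbf{c}}}]$ to $\langle g_s, g_t\rangle$ isolates $c_{st}$, and the further restriction to $\langle g_l\rangle$ isolates $c_l$. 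For surjectivity, a cardinality match suffices: from $H^{2k}(\Z_m, \Z)\cong \Z_m$ for $k\ge 1$ and the K\"unneth formula, one computes
$$|H^4(G,\Z)|=\prod_{l} m_l\cdot \prod_{s<t}(m_s,m_t)\cdot\prod_{r<s<t}(m_r,m_s,m_t)=|\mathcal{A}|,$$
so injectivity already forces bijectivity.

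\textbf{Main obstacle.} The hard part will be sharpening the injectivity argument so that the bounds $0\le c_{ij}<(m_i,m_j)$ and $0\le c_{rst}<(m_r,m_s,m_t)$ are recovered tightly rather than modulo some coarser integer. This reduces to exhibiting explicit normalized $2$-cochains whose coboundaries shift $c_{ij}$ by $(m_i,m_j)$ (respectively $c_{rst}$ by $(m_r,m_s,m_t)$); the constructions are mechanical but require careful bookkeeping of how the mixed indices interact across the three types of factors in \eqref{2.28}.
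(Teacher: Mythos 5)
The paper itself does not prove Proposition \ref{p2.26}: it is imported from \cite[Proposition 3.8]{HLYY2}, where the statement is obtained by an explicit homological computation --- a free resolution of $\Z$ over $\Z G$ is assembled as the tensor product of the periodic resolutions of the cyclic factors, chain maps to and from the bar resolution are written down, and $H^3(G,\k^*)\cong H^4(G,\Z)$ together with the explicit representatives $\omega_{\underline{\mathbf{c}}}$ come out of one and the same computation, so completeness and pairwise distinctness of the classes are byproducts rather than separate steps. Your route (direct cocycle verification, injectivity via invariants, surjectivity by a cardinality count) is genuinely different, and much of it is sound: Step 1 is correct (the cup-product description of the mixed and trilinear factors works, and normalization is immediate); the total $S_3$-alternation is indeed invariant under coboundaries for abelian $G$ and recovers $c_{rst}$ modulo $(m_r,m_s,m_t)$; restriction to $\langle g_l\rangle$ recovers $c_l$ because $H^3(\Z_{m_l},\k^*)\cong\Z_{m_l}$ with the standard generator; and the K\"unneth computation of $|H^4(G,\Z)|$ does give $|\mathcal{A}|$, so injectivity would force bijectivity.

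The genuine gap is the separation of the mixed parameters $c_{st}$. Saying that ``the restriction of $[\omega_{\underline{\mathbf{c}}}]$ to $\langle g_s,g_t\rangle$ isolates $c_{st}$'' only renames the problem: it reduces it to injectivity over a rank-two group, where the alternation invariant vanishes identically and the restrictions to the two cyclic subgroups only see $c_s$ and $c_t$; you produce no invariant distinguishing the classes of the cocycles $\zeta_{m_t}^{c_{st} i_t[\frac{j_s+k_s}{m_s}]}$ on $\Z_{m_s}\times\Z_{m_t}$ for $0\le c_{st}<(m_s,m_t)$. Moreover, the fix proposed in your ``main obstacle'' paragraph points the wrong way: exhibiting $2$-cochains whose coboundaries shift $c_{st}$ by $(m_s,m_t)$ (or $c_{rst}$ by $(m_r,m_s,m_t)$) would prove that certain classes \emph{coincide} --- useful for showing the list is exhaustive among cocycles of this shape --- but it cannot sharpen injectivity, which requires showing that classes with distinct parameters in the stated ranges are \emph{different}. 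To close the gap you need an actual argument on the rank-two piece, e.g.\ the resolution/chain-map computation of \cite{HLYY2}, a Lyndon--Hochschild--Serre argument, or a genuine invariant detecting the mixed class; without it the bijectivity claim, and hence the proposition, is not established.
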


\subsection{Twisted Yetter-Drinfeld module categories}
The Yetter-Drinfeld module category $^H_H\mathcal{YD}$ of a quasi-Hopf algebra $H$ may be defined as the center $Z(H$-$\mod)$ of its module category
$H$-$\mod$ and it is braided tensor equivalent to the module category of the quantum double $D(H)$ of $H,$ see \cite{majid, m3} for more details.
The Yetter-Drinfeld module category of a coquasi-Hopf algebra can be defined in a dual manner, see \cite{HLYY2,HY}.

In this paper we are mainly concerned with the Yetter-Drinfeld module category of the coquasi-Hopf algebra $(\k G,\Phi)$ of a finite abelian group $G$ and a normalized
3-cocycle $\Phi$ on $G$ for our purpose. To emphasize $\Phi,$ we denote the Yetter-Drinfeld category of $(\k G,\Phi)$ as $_{\k G}^{\k G}\mathcal{Y}\mathcal{D}^{\Phi}$, and the objects in it are called {\bf twisted Yetter-Drinfeld modules}. Define
\begin{equation}
\widetilde{\Phi}_g(x,y)=\frac{\Phi(g,x,y)\Phi(x,y,g)}{\Phi(x,g,y)}
\end{equation}
 for all $g,x,y\in G$. By direct computation one can show that $\widetilde{\Phi}_g$ is a 2-cocycle on $G$. The construction of category $_{\k G}^{\k G}\mathcal{Y}\mathcal{D}^{\Phi}$ can be summarized as follows, the detailed computation can be found in \cite{HLYY2,HY}.

\begin{proposition}\label{p2.4}
A vector space $V$ is an object in $_{\k G}^{\k G}\mathcal{Y}\mathcal{D}^{\Phi}$ if and only if $V=\oplus_{g\in G}V_g$ with each $V_g$ a projective
$G$-representation with respect to the 2-cocycle $\widetilde{\Phi}_g,$ namely
\begin{equation}\label{eq2.6}
e\triangleright(f\triangleright v)=\widetilde{\Phi}_g(e,f) (ef)\triangleright v.
\end{equation}
The tensor product $V_g\otimes V_h$ is determined by
\begin{equation}\label{n3.8}
e\triangleright (X\otimes Y)=\widetilde{\Phi}_e(g,h)e\triangleright X\otimes e\triangleright Y, \ X\in V_g, \ Y\in V_h.
\end{equation}
The associativity and the braiding constraints of $_{\k G}^{\k G}\mathcal{Y}\mathcal{D}^{\Phi}$ are given respectively by
\begin{eqnarray}
&\ a_{V_e,V_f,V_g}((X\otimes Y)\otimes Z) =\Phi(e,f,g)^{-1} X\otimes (Y\otimes Z )\\
&R(X\otimes Y)=e \triangleright Y\otimes X
\end{eqnarray}
for all $X\in V_e,\  Y\in V_f,\  Z \in V_g.$
\end{proposition}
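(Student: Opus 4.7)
The plan is to unpack the general definition of Yetter--Drinfeld modules over the coquasi-Hopf algebra $(\k G,\Phi)$ and to read off the three pieces of structure (objects, tensor product, constraints) by direct calculation.

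First I would identify the underlying comodule structure. A left $\k G$-comodule is the same as a $G$-graded vector space via $\rho(v)=g\otimes v$ for $v\in V_g$, so any $V\in{_{\k G}^{\k G}\mathcal{Y}\mathcal{D}^{\Phi}}$ decomposes as $V=\bigoplus_{g\in G}V_g$. The nontrivial part is the action: over a coquasi-Hopf algebra, the action is only quasi-associative, with the failure of associativity measured by $\Phi$. Combining the quasi-module axiom with the Yetter--Drinfeld compatibility between action and coaction yields, for $v\in V_g$ and $e,f\in G$, an identity of the shape $e\triangleright(f\triangleright v)=\lambda(e,f;g)\,(ef)\triangleright v$ for a scalar $\lambda(e,f;g)$ built from three occurrences of $\Phi$: one associator normalises $(ef)\triangleright v$ against the intermediate $e\triangleright(f\triangleright v)$, and the YD compatibility brings in a $\Phi(g,e,f)\Phi(e,f,g)/\Phi(e,g,f)$ contribution from moving the grading past the two acting elements. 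One then checks that $\lambda(e,f;g)=\widetilde{\Phi}_g(e,f)$ and, using only the 3-cocycle equation for $\Phi$, that $\widetilde{\Phi}_g$ is normalized and satisfies the 2-cocycle identity
\[
\widetilde{\Phi}_g(e,f)\widetilde{\Phi}_g(ef,h)=\widetilde{\Phi}_g(f,h)\widetilde{\Phi}_g(e,fh),
\]
so each $V_g$ is a projective representation of $G$ with 2-cocycle $\widetilde{\Phi}_g$, as in \eqref{eq2.6}.

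Next I would derive the tensor product structure. The coaction on $V\otimes W$ is the diagonal one, so $V_g\otimes W_h$ is of degree $gh$. For the action, the general coquasi-Hopf formula expresses $e\triangleright(X\otimes Y)$ as the naive diagonal action $e\triangleright X\otimes e\triangleright Y$ corrected by the associator contributions that arise when one compares $(e\triangleright X)\otimes(e\triangleright Y)$ with the putative element of degree $gh$. A direct bookkeeping shows this correction is precisely $\widetilde{\Phi}_e(g,h)$, giving \eqref{n3.8}; one then checks that this action is again twisted-associative with the same cocycle $\widetilde{\Phi}_{gh}$, so the tensor product lies in ${_{\k G}^{\k G}\mathcal{Y}\mathcal{D}^{\Phi}}$. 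The associativity constraint $a_{V_e,V_f,V_g}$ is inherited from the associator of $(\k G,\Phi)$ on $V_e\otimes V_f\otimes V_g$, which by convention accounts for the sign $\Phi(e,f,g)^{-1}$. Finally, the braiding is the standard YD braiding $R(X\otimes Y)=X_{(-1)}\triangleright Y\otimes X_{(0)}$, which for $X\in V_e$ specializes to $R(X\otimes Y)=e\triangleright Y\otimes X$.

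The routine but delicate step—essentially the only real obstacle—is tracking all associator contributions consistently. In particular, the identity $\widetilde{\Phi}_g(x,y)=\Phi(g,x,y)\Phi(x,y,g)/\Phi(x,g,y)$ encodes exactly the three places where a ternary bracket involving $g,x,y$ must be re-associated, and the 2-cocycle identity for $\widetilde{\Phi}_g$ as well as the compatibility of the tensor product action with the associator both reduce, after some rearrangement, to the 3-cocycle equation for $\Phi$. Since these computations are lengthy but mechanical and are already recorded in \cite{HLYY2,HY}, I would sketch only the derivation of $\widetilde{\Phi}_g$ and of the twist $\widetilde{\Phi}_e(g,h)$ in \eqref{n3.8}, and appeal to those references for the remaining verifications.
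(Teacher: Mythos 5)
Your proposal is correct and matches the paper's treatment: the paper gives no independent proof of this proposition but, exactly as you do, presents it as a direct unpacking of the Yetter--Drinfeld structure over $(\k G,\Phi)$ and defers the mechanical associator bookkeeping to \cite{HLYY2,HY}, and your identified scalars $\widetilde{\Phi}_g(e,f)$ for the projective action and $\widetilde{\Phi}_e(g,h)$ for the tensor product action agree with \eqref{eq2.6} and \eqref{n3.8}. The only small point worth making explicit is that the $2$-cocycle identity for $\widetilde{\Phi}_g$ uses not just the $3$-cocycle equation but also the commutativity of $G$ (in general it only holds on the centralizer of $g$), which is harmless here since $G$ is abelian throughout.
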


\begin{remark}\label{rm2.4}
For a simple twisted Yetter-Drinfeld module $V$ in ${_{\k G}^{\k G}\mathcal{Y}\mathcal{D}^{\Phi}}$, there exists some $g \in G$ such that $V=V_g$ and we define $g_V:=g$ in this case. Recall that a $2$-cocycle $\varphi$ on $G$ is called symmetric if $\varphi(g,h)=\varphi(h,g)$ for all $h,g\in G$. By \eqref{eq2.6},  it is not hard to show that a simple Yetter-Drinfeld module $V$ with $g_V=g$ is $1$-dimensional if and only if $\widetilde{\Phi}_g$ is symmetric.
\end{remark}

From the representatives of $3$-cocycles on abelian groups given in Proposition \ref{p2.26}, one can verify directly that
\begin{equation}
\widetilde{\Phi}_g\widetilde{\Phi}_h=\widetilde{\Phi}_{gh},\  \forall g,h\in G.
\end{equation}

The following proposition is fundamental and the proof follows from \eqref{n3.8} and the fact that $S^2=\id$.
\begin{proposition}\label{p2.5}
Suppose $V_g$ is $(G,\widetilde{\Phi}_g)$-representation, $V_h$ is a $(G,\widetilde{\Phi}_h)$-representation, then $V_g\otimes V_h$ is a $(G,\widetilde{\Phi}_{gh})$-representation.
In particular, the dual object $V_g^*$ of $V_g$ is a $(G,\widetilde{\Phi}_{g^{-1}})$-representation and $(V_g^*)^*=V_g$.
\end{proposition}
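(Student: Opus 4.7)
The plan is to verify both assertions by direct computation, relying only on \eqref{n3.8}, \eqref{eq2.6}, and the cocycle identity $\widetilde{\Phi}_e\widetilde{\Phi}_f=\widetilde{\Phi}_{ef}$ recorded just before the proposition.

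For the first claim, I would fix $X\in V_g$, $Y\in V_h$, and arbitrary $e,f\in G$, and expand $e\triangleright(f\triangleright(X\otimes Y))$ by applying \eqref{n3.8} twice. Because the $G$-action preserves the homogeneous components $V_g$ and $V_h$, the intermediate vectors $f\triangleright X$ and $f\triangleright Y$ remain in the respective summands, so that \eqref{eq2.6} applies on each tensor factor separately. This produces the scalar $\widetilde{\Phi}_f(g,h)\widetilde{\Phi}_e(g,h)\widetilde{\Phi}_g(e,f)\widetilde{\Phi}_h(e,f)$ in front of $(ef)\triangleright X\otimes(ef)\triangleright Y$. The identity $\widetilde{\Phi}_e\widetilde{\Phi}_f=\widetilde{\Phi}_{ef}$ then collapses the first two factors to $\widetilde{\Phi}_{ef}(g,h)$ and the last two to $\widetilde{\Phi}_{gh}(e,f)$. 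Dividing out $\widetilde{\Phi}_{ef}(g,h)$, which is the prefactor of $(ef)\triangleright(X\otimes Y)$ itself by \eqref{n3.8}, leaves exactly the scalar $\widetilde{\Phi}_{gh}(e,f)$ required for $V_g\otimes V_h$ to be a $(G,\widetilde{\Phi}_{gh})$-representation.

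For the second claim I would invoke the rigid structure of $_{\k G}^{\k G}\mathcal{YD}^\Phi$, under which $V_g^*$ inherits its grading from the antipode $S$ of $\k G$; since $S(g)=g^{-1}$, the space $V_g^*$ is homogeneous of degree $g^{-1}$. Its $G$-action is the transpose action through $S$, corrected by the evaluation and coevaluation morphisms of the category. The same bookkeeping as in the previous paragraph, now with $g$ replaced by $g^{-1}$ and the action routed through $S$, shows that $V_g^*$ carries a $(G,\widetilde{\Phi}_{g^{-1}})$-representation structure; this is consistent with $\widetilde{\Phi}_g\widetilde{\Phi}_{g^{-1}}=\widetilde{\Phi}_1=1$. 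Finally, $(V_g^*)^*=V_g$ drops out because $S^2=\id$ on $\k G$, so taking the dual twice restores both the grading $(g^{-1})^{-1}=g$ and the original cocycle $\widetilde{\Phi}_g$, and the natural double-dual isomorphism is compatible with this identification.

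The only real obstacle I anticipate is the scalar bookkeeping in the first step, where four different values of $\widetilde{\Phi}$ appear and must be rearranged into the two expected factors. Once the identity $\widetilde{\Phi}_e\widetilde{\Phi}_f=\widetilde{\Phi}_{ef}$ is applied the computation is essentially mechanical, and the dual statement then reduces to the observation that $S$ is an involution on $\k G$.
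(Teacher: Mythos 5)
Your computation is correct and fleshes out exactly the argument the paper intends: the paper gives no detailed proof beyond noting that the statement "follows from \eqref{n3.8} and $S^2=\id$", and your expansion via \eqref{n3.8}, \eqref{eq2.6} and the recorded identity $\widetilde{\Phi}_e\widetilde{\Phi}_f=\widetilde{\Phi}_{ef}$, together with the rigidity/$S^2=\id$ argument for the dual, is precisely that route. No gaps.
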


A $3$-cocycle $\Phi$ on $G$ is called an {\bf abelian $3$-cocycle} if $^{\k G}_{\k G}\mathcal{Y}\mathcal{D}^\Phi$ is pointed, i.e. each simple object of $^{\k G}_{\k G}\mathcal{Y}\mathcal{D}^\Phi$ is $1$-dimensional. Using the representatives of normalized $3$-cocycles listed in Proposition \ref{p2.26}, we can write out the representatives of abelian $3$-cocycles of a finite abelian group.

\begin{proposition}\cite[Proposition 3.14]{HLYY2}\label{p2.6}
Suppose $G=\Z_{m_1}\times \Z_{m_2}\times \cdots\times \Z_{m_n}$, $e_i$ is a generator of $\Z_{m_i}$ for all $1\leq i\leq n$, and $\Phi$ is an abelian $3$-cocycle on $G.$ Then up to cohomology $\Phi$ must be of the form
\begin{equation}\label{3.8}
\Phi(e_1^{i_1}\cdots e_n^{i_n},e_1^{j_1}\cdots e_n^{j_n},e_1^{k_1}\cdots e_N^{k_n})
=\prod_{l=1}^n\zeta_l^{c_li_l[\frac{j_l+k_l}{m_l}]}\prod_{1\leq s<t\leq n}\zeta_{m_t}^{c_{st}i_t[\frac{j_s+k_s}{m_s}]}.
\end{equation}
\end{proposition}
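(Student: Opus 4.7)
By Proposition \ref{p2.26} we may replace $\Phi$ by a cohomologous cocycle $\omega_{\underline{\mathbf{c}}}$ for some $\underline{\mathbf{c}}\in \mathcal{A}$. The plan is to show that $\omega_{\underline{\mathbf{c}}}$ is abelian if and only if every triple coefficient $c_{rst}$ vanishes, which leaves precisely the expression displayed in \eqref{3.8}.

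First I would reduce the abelian condition to a symmetry condition on the $2$-cocycles $\widetilde{\Phi}_g$. By Remark \ref{rm2.4}, each simple object $V$ of ${}_{\k G}^{\k G}\mathcal{YD}^{\Phi}$ is supported at a single $g \in G$ and corresponds to a simple projective representation of $G$ with $2$-cocycle $\widetilde{\Phi}_g$. For a finite abelian group over an algebraically closed field of characteristic zero, the twisted group algebra $\k^{\widetilde{\Phi}_g} G$ is semisimple, and it is commutative (hence a direct sum of copies of $\k$, so all its simples are $1$-dimensional) exactly when $\widetilde{\Phi}_g$ is symmetric. Thus $\Phi$ is abelian if and only if $\widetilde{\Phi}_g(x,y) = \widetilde{\Phi}_g(y,x)$ for all $g,x,y \in G$.

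Next I would compute $\widetilde{\Phi}_g(x,y)$ separately for each of the three factors in \eqref{2.28}. For the diagonal factor $\prod_l \zeta_{m_l}^{c_l i_l [\frac{j_l+k_l}{m_l}]}$ and the pairwise factor $\prod_{s<t} \zeta_{m_t}^{c_{st} i_t [\frac{j_s+k_s}{m_s}]}$, each depends on the last two arguments only through the symmetric expressions $[\frac{j_l+k_l}{m_l}]$ and $[\frac{j_s+k_s}{m_s}]$; consequently $\Phi(x,y,g) = \Phi(x,g,y)$ for these pieces, so their contribution to $\widetilde{\Phi}_g(x,y)$ collapses to $\Phi(g,x,y)$, which is itself manifestly symmetric in $(x,y)$. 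These factors therefore give a symmetric contribution to $\widetilde{\Phi}_g$ for any choice of $c_l$ and $c_{st}$.

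Finally I would analyze the triple factor $\prod_{r<s<t}\zeta_{(m_r,m_s,m_t)}^{c_{rst}k_r j_s i_t}$. A direct computation shows that for each fixed triple $r<s<t$ the ratio $\widetilde{\Phi}_g(x,y)\widetilde{\Phi}_g(y,x)^{-1}$ equals $\zeta_{(m_r,m_s,m_t)}^{E}$ with $E = c_{rst}\bigl(g_t(x_sy_r - y_sx_r) + g_r(x_ty_s - y_tx_s) - g_s(x_ty_r - y_tx_r)\bigr)$. Specializing to $g=e_t$, $x=e_s$, $y=e_r$ reduces $E$ to $c_{rst}$, so the symmetry requirement forces $c_{rst}\equiv 0 \pmod{(m_r,m_s,m_t)}$, and the constraint $0\le c_{rst} < (m_r,m_s,m_t)$ gives $c_{rst}=0$. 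Conversely, once every $c_{rst}$ vanishes, \eqref{2.28} reduces to \eqref{3.8} and the previous paragraph shows that $\widetilde{\Phi}_g$ is symmetric for every $g$, i.e.\ $\Phi$ is abelian. The only subtlety worth flagging is the very first step, the standard reduction from the Yetter-Drinfeld semisimplicity condition to pointwise symmetry of $\widetilde{\Phi}_g$ via projective representations of abelian groups; once that is in hand, the remainder is a bookkeeping computation on the explicit cocycle representatives of Proposition \ref{p2.26}.
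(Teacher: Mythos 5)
Your argument is correct, but note that the paper does not prove this proposition at all: it is imported verbatim from \cite[Proposition 3.14]{HLYY2}, so there is no in-paper proof to match. Your self-contained derivation is sound and in fact parallels the computation the paper does later in Lemma \ref{l5.4}, where $\Phi$ is likewise split as $\Psi\Gamma$ with $\Psi$ the diagonal-plus-pairwise part and $\Gamma$ the triple part, and the symmetry of $\widetilde{\Phi}_g$ is tested against the $c_{rst}$-coefficients; your explicit exponent $E$ and the specialization $g=e_t$, $x=e_s$, $y=e_r$ check out (only the triple $(r,s,t)$ survives that specialization, so the different roots of unity do not mix), and the reduction of ``abelian'' to ``$\widetilde{\Phi}_g$ symmetric for all $g$'' is exactly Remark \ref{rm2.4} combined with the standard fact about twisted group algebras of abelian groups. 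The one step you use silently and should state is that abelianness is a cohomology-invariant property, so that replacing $\Phi$ by the representative $\omega_{\underline{\mathbf{c}}}$ is legitimate; this follows either from the twisting tensor equivalence $F_J:\ {}^{\k G}_{\k G}\mathcal{YD}^{\Phi}\to{}^{\k G}_{\k G}\mathcal{YD}^{\Phi\ast\partial J}$ recalled in Section 3 (it preserves underlying spaces, hence pointedness), or from the direct check that for abelian $G$ one has $\widetilde{(\partial J)}_g(x,y)=\frac{J(g,xy)}{J(xy,g)}\cdot\frac{J(x,g)}{J(g,x)}\cdot\frac{J(y,g)}{J(g,y)}$, which is symmetric in $x,y$, so coboundaries do not affect the symmetry of $\widetilde{\Phi}_g$. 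With that sentence added, your proof is complete.
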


An object $V$ of $^{\k G}_{\k G}\mathcal{Y}\mathcal{D}^\Phi$ is said to be {\bf of diagonal type} if $V$ is a direct sum of $1$-dimensional objects. It is not hard to verify that each object of $^{\k G}_{\k G}\mathcal{Y}\mathcal{D}^\Phi$ is of diagonal type if and only if $\Phi$ is an abelian $3$-cocycle on $G$.

\subsection{Quasi-version of bosonization}\label{ss2.3}
The study of pointed coquasi-Hopf algebras may be reduced to that of Hopf algebras in twisted Yetter-Drinfeld categories. The related notions of algebras and Hopf algebras in a braided tensor category can be found in \cite{majid}.

Assume that $$\M=\bigoplus_{i \in \N} \M_{i}$$ is a coradically graded pointed coquasi-Hopf algebra over an abelian group $G$.  So $\M_0=(\k G,\Phi)$ for a $3$-cocycle $\Phi$ on $G$.
 Let $\pi:\; \M\to \M_{0}$ be
the canonical projection. Then $\M$ is a $kG$-bicomodule naturally via
$$\delta_{L}:=(\pi\otimes \id)\D,\;\;\;\;\delta_{R}:=(\id\otimes \pi)\D.$$
Thus there is a $G$-bigrading on $\M$, that is,
$$\M=\bigoplus_{g,h\in G}\;^{g}\M^{h}$$
where $^{g}\M^{h}=\{m\in \M \mid \delta_{L}(m)=g\otimes m,\;\delta_{R}(m)=m\otimes h\}$. Define the coinvariant subalgebra of $\M$ by
$$R:=\{m\in \M \mid (\id\otimes \pi)\D(m)=m\otimes 1\}.$$
Then $R=\oplus_{i\geq 0} R_i$ is a coradically graded Hopf algebra in $^{\k G}_{\k G}\mathcal{Y}\mathcal{D}^\Phi$ such that $R_0=\k$.

Conversely, let $H=\oplus_{i\geq 0} H_i$ be a coradically graded Hopf algebra in $_{G}^{G}\mathcal{YD}^{\Phi}$ such that $H_0=\k$. If $X\in H_{n}$, then we say that $X$ has length $n$. Since $H$ is a left $G$-comodule, there is a $G$-grading on $H$:
 $$H=\bigoplus_{x\in G} {^{x}H}$$
 where $^{x}H=\{X\in H|\delta_{L}(X)=x\otimes X\}.$   Hence $$H=\bigoplus_{g\in G} {^gH} =\bigoplus_{g\in G,n\in \mathbb{N}} {^{g}H_{n}}.$$
 As a convention, homogeneous elements in $H$ are denoted by capital letters, say $X, Y, Z, \dots,$ and the associated degrees are denoted by their lower cases, say $x, y, z, \dots.$ For any $X\in H$, we write its comultiplication as $$\D_{H}(X)=X_{(1)}\otimes X_{(2)}.$$

\begin{lemma}\cite[Proposition 3.3]{HY}
Keep the notations as above. We define a coquasi-Hopf algebra on $H\otimes \k G$ as follows. The product is given by
\begin{equation}
(X\otimes g)(Y\otimes h)=\frac{\Phi(xg,y,h)\Phi(x,y,g)}{\Phi(x,g,y)\Phi(xy,g,h)}X(g\triangleright Y)\otimes gh,
\end{equation}
and the coproduct is determined by
\begin{equation}
\D(X\otimes g)=\Phi(x_{(1)},x_{(2)},g)^{-1}(X_{(1)}\otimes x_{(2)}g)\otimes (X_{(2)}\otimes g).
\end{equation}
The quasi-antipode $(S,\alpha,\beta)$ is given by
\begin{eqnarray}
&S(X\otimes g)=\frac{\Phi(g^{-1},g,g^{-1})}{\Phi(x^{-1}g^{-1},xg,g^{-1})\Phi(x,g,g^{-1})}(1\otimes x^{-1}g^{-1})(S_H(X)\otimes 1), \\
&\alpha(1\otimes g)=1,\ \ \ \alpha(X\otimes g)=0,  \\
&\beta(1\otimes g)=\Phi(g,g^{-1},g)^{-1},\ \ \beta(X\otimes g)=0,
\end{eqnarray}
here $g,h\in G$ and $X,Y$ are homogeneous elements of length  $\geq 1.$
\end{lemma}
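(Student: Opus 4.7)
My strategy is to verify the coquasi-Hopf algebra axioms of Section~2 directly on $H \otimes \k G$. Conceptually, this is the coquasi analogue of Radford's biproduct (bosonization) for a Hopf algebra in a braided category: the various $\Phi$-factors appearing in the formulas for the product, coproduct and antipode are inserted exactly so as to absorb both the nontrivial associator on $(\k G, \Phi)$ and the projective $G$-actions $\widetilde{\Phi}_g$ governing the homogeneous pieces of $H$. The resulting associator on $H \otimes \k G$ should be the pullback of $\Phi$ along the projection onto $\k G$, which vanishes on positive-length components by the coradically graded hypothesis on $H$, matching the vanishing property of $\gr\Phi$ recalled earlier.

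First I would check the coalgebra axioms: coassociativity of $\D$ reduces, after expanding both $(\D \otimes \id)\D$ and $(\id \otimes \D)\D$ on a generic $X \otimes g$, to coassociativity of $\D_H$ inside $^{\k G}_{\k G}\mathcal{YD}^{\Phi}$ together with the normalized $3$-cocycle identity for $\Phi$ applied to the triple $(x_{(1)}, x_{(2)}, x_{(3)} g)$; the counit axioms are immediate. Next I would verify quasi-associativity of the product: expanding $(A \otimes e)((B \otimes f)(C \otimes g))$ and $((A \otimes e)(B \otimes f))(C \otimes g)$, the tensor-factor part is controlled by associativity of multiplication in $H$ inside the braided category together with Proposition~\ref{p2.5}, which ensures that iterated $G$-actions on products of homogeneous elements behave multiplicatively in the group variable; the scalar discrepancy collapses, after repeated use of the $3$-cocycle identity for $\Phi$, precisely to the prescribed associator $\Phi(e,f,g)$ pulled back from $\k G$. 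The quasi-bialgebra compatibility of $\D$ with $m$ follows by the same mechanism, using that $\D_H$ is an algebra morphism in the braided category together with the very definition $\widetilde{\Phi}_g(x,y) = \Phi(g,x,y)\Phi(x,y,g)/\Phi(x,g,y)$.

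For the quasi-antipode, the key observation is that $\alpha$ and $\beta$ both vanish on elements $X \otimes g$ with $X$ of positive length. Hence when evaluating axioms such as $S(a_1)\alpha(a_2) a_3 = \alpha(a) 1_\M$ on a general $X \otimes g$, the only surviving contributions come from those terms in the iterated coproduct that place all the $H$-content in a single tensor factor while the others lie in $\k G$. The $\Phi$-twist in the formula for $S(X \otimes g)$ is exactly what is needed to reproduce the antipode identities of the coquasi-Hopf algebra $(\k G, \Phi)$ on the $\k G$-parts while simultaneously invoking the antipode identity of $H$ as a Hopf algebra in the braided category on the $H$-part.

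The principal obstacle is the quasi-associativity check, which requires comparing two long strings of scalar $\Phi$-values attached to triples built from the degrees $x, y, z$ of $X, Y, Z$ and the group labels $e, f, g$. Naively applying the $3$-cocycle identity does not telescope cleanly; one must first reorganize one side using the identity $\widetilde{\Phi}_g \widetilde{\Phi}_h = \widetilde{\Phi}_{gh}$ to trade ratios of $\widetilde{\Phi}$-values for explicit $\Phi$-values on the appropriate triples, after which two applications of the $3$-cocycle identity for $\Phi$ match the two sides up to the desired factor $\Phi(e,f,g)$. Once this bookkeeping is in place, the remaining verifications reduce essentially to repackaging familiar braided-category identities into the notation of $H \otimes \k G$.
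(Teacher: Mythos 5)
The paper gives no proof of this lemma at all: it is imported verbatim from \cite[Proposition 3.3]{HY}, so there is no internal argument to compare with. Your plan --- a direct verification of the coquasi-Hopf axioms, with coassociativity reduced to the braided coassociativity of $\D_H$ plus the $3$-cocycle identity on $(x_{(1)},x_{(2)},x_{(3)},g)$, quasi-associativity reduced to the projective-action and module-algebra identities \eqref{eq2.6}, \eqref{n3.8} with the resulting associator being the pullback of $\Phi$ along the projection to $\k G$, and the antipode axioms exploited through the vanishing of $\alpha,\beta$ in positive length --- is exactly the standard route and the one taken in the cited source, so the architecture is sound.

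Two caveats. First, what you have written is a plan rather than a proof: for a statement of this kind the entire content is the scalar bookkeeping with $\Phi$, and none of it is actually executed, so the quasi-associativity and compatibility checks remain to be carried out rather than merely described. Second, your description of the antipode verification is imprecise in a way that would mislead the computation: in the axiom $S(a_1)\alpha(a_2)a_3=\alpha(a)1_\M$ the condition $\alpha(a_2)\neq 0$ only forces the \emph{middle} leg of the iterated coproduct to lie in $\k G$; the two outer legs can both carry positive-length $H$-components, so the surviving sum is not concentrated in ``a single tensor factor'' and its vanishing for $X$ of length $\geq 1$ must come from the braided antipode identity $S_H(X_{(1)})X_{(2)}=\e(X)1$ (and its twin) together with the quasi-antipode identities of $(\k G,\Phi)$, not from a support argument. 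Only in the last axiom, the one involving $\Phi(a_1,S(a_3),a_5)\beta(a_2)\alpha(a_4)$, does the vanishing of the associator outside $\k G$ kill every term outright. With that correction, and with the computations actually done, your approach goes through.
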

In the following, by $H\# \k G$ we denote the resulting coquasi-Hopf algebra defined on $H\otimes \k G.$
\begin{lemma}\cite[Proposition 3.4]{HY}\label{l2.7}
Let $\M$ be a coradically graded pointed coquasi-Hopf algebra over abelian group $G$ and $R$ the coinvariant subalgebra of $\M$. Then we have $R\#\k G\cong \M$ as coquasi-Hopf algebras.
\end{lemma}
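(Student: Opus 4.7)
The plan is to exhibit a linear map
\[
\Psi : R \# \k G \To \M, \quad X \otimes g \mapsto Xg
\]
with the product taken inside $\M,$ and to verify that it is an isomorphism of coquasi-Hopf algebras. Four things must be checked: bijectivity, compatibility with the coproduct, compatibility with the product, and compatibility with the quasi-antipode.

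First, for bijectivity, I would use the bicomodule decomposition $\M = \bigoplus_{g,h \in G} {}^{g}\M^{h}.$ Since $\M$ is coradically graded and $\pi$ kills everything of positive degree, the coinvariance condition unfolds to ${}^{g} R = R \cap {}^{g}\M^{1},$ and more generally right multiplication by the grouplike $h \in \k G$ carries ${}^{g}\M^{1}$ bijectively onto ${}^{g}\M^{h}.$ Summing over $(g,h)$ gives bijectivity of $\Psi.$

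Second, for the coalgebra property, I would unfold $\D_\M(Xg).$ Writing $\D_R(X) = X_{(1)} \otimes X_{(2)}$ with $X_{(1)}$ lying in bihomogeneous left-right degree $(x_{(1)}, x_{(2)})$ and $X_{(2)}$ in degree $(x_{(2)},1),$ compute $\D_\M(Xg) = \D_\M(X)\D_\M(g),$ and then apply quasi-coassociativity on the triple $(X_{(1)}, x_{(2)}, g)$ (using $\D_\M(g) = g \otimes g$) to pull out the factor $\Phi(x_{(1)}, x_{(2)}, g)^{-1}.$ The resulting expression matches exactly the coproduct of $X \otimes g$ in $R \# \k G$ from the preceding lemma, and the counit compatibility is immediate.

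Third, for the algebra property, start from $(Xg)(Yh)$ in $\M$; two quasi-reassociations on the left and one on the right bring it into the form $X(g \triangleright Y)(gh),$ where the projective $G$-action on $R$ is defined by $gY = (g \triangleright Y)\, g$ in $\M.$ Collecting the four $\Phi$-factors produced by these reassociations, together with the one coming from the definition of $\triangleright,$ a single application of the $3$-cocycle identity collapses them to
\[
\frac{\Phi(xg,y,h)\,\Phi(x,y,g)}{\Phi(x,g,y)\,\Phi(xy,g,h)},
\]
matching the smash product formula. Finally, the formulas for $S,$ $\alpha,$ $\beta$ on $R \# \k G$ were tailored to agree with those inherited from $\M$ via $\Psi$: on the coradical part they reduce to the quasi-antipode of $(\k G,\Phi)$ from Example 2.1, and on elements of length $\geq 1$ both $\alpha$ and $\beta$ vanish by the grading constraint on $\gr\alpha$ and $\gr\beta$ recalled earlier, so nothing beyond transport via $\Psi$ need be checked.

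The main obstacle is the third step: keeping track of the successive $\Phi$-cocycle factors produced by the reassociations and recognizing that they collapse cleanly, via a single use of the $3$-cocycle identity, to the compact ratio above. The other three steps are essentially forced by the definitions.
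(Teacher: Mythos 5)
The paper offers no proof of this lemma at all---it is imported verbatim from \cite[Proposition 3.4]{HY}---and your argument is precisely the standard quasi-bosonization proof used there: the multiplication map $X\otimes g\mapsto Xg$, bijective by the $G$-bigrading of the coradically graded $\M$, shown to intertwine the smash coproduct, product and quasi-antipode data by quasi-associativity bookkeeping, with $\alpha,\beta$ handled by the grading constraint; so your proposal is correct and takes essentially the same route as the cited source. Only cosmetic slips occur: right multiplication by $h$ carries ${}^{g}\M^{1}$ onto ${}^{gh}\M^{h}$ rather than ${}^{g}\M^{h}$ (the summation over $(g,h)$ still gives bijectivity); the factor $\Phi(x_{(1)},x_{(2)},g)^{-1}$ arises from quasi-associativity of the \emph{product} inside the first tensor leg, not ``quasi-coassociativity'' (the coproduct of a coquasi-Hopf algebra is strictly coassociative); and the collapse of the product-side $\Phi$-factors to $\frac{\Phi(xg,y,h)\Phi(x,y,g)}{\Phi(x,g,y)\Phi(xy,g,h)}$ uses the $3$-cocycle identity twice together with commutativity of $G$, not a single application.
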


\subsection{Nichols algebras and arithmetic root systems}
 Nichols algebras are the analogue of the usual symmetric algebras in more general braided tensor categories. Here for our purpose we only give the definition of a Nichols algebra in twisted Yetter-Drinfeld module categories $^{\k G}_{\k G}\mathcal{Y}\mathcal{D}^\Phi$.

Let $V$ be a nonzero object in $^{\k G}_{\k G}\mathcal{Y}\mathcal{D}^\Phi.$ By $T_{\Phi}(V)$ we denote the tensor algebra in $^{\k G}_{ \k G}\mathcal{Y}\mathcal{D}^\Phi$ generated freely by $V.$ It is clear that $T_{\Phi}(V)$ is isomorphic to $\bigoplus_{n \geq 0}V^{\otimes \overrightarrow{n}}$ as a linear space, where $V^{\otimes \overrightarrow{n}}$ means
$\underbrace{(\cdots((}_{n-1}V\otimes V)\otimes V)\cdots \otimes V).$ This induces a natural $\mathbb{N}$-graded structure on $T_{\Phi}(V).$ Define a comultiplication on $T_{\Phi}(V)$ by $\Delta(X)=X\otimes 1+1\otimes X, \ \forall X \in V,$ a counit by $\varepsilon(X)=0,$ and an antipode by $S(X)=-X.$ These provide a graded Hopf algebra structure on $T_{\Phi}(V)$ in the braided tensor category $^{\k G}_{\k G}\mathcal{Y}\mathcal{D}^\Phi.$

\begin{definition}\label{d2.9}
The Nichols algebra $B(V)$ of $V$ is defined to be the quotient Hopf algebra $T_{\Phi}(V)/I$ in $^{\k G}_{\k G}\mathcal{Y}\mathcal{D}^\Phi,$ where $I$ is the unique maximal graded Hopf ideal generated by homogeneous elements of degree greater than or equal to 2.
\end{definition}

A Nichols algebra $B(V)$ is called {\bf of diagonal type} if $V$ is a twisted Yetter-Drinfeld module of diagonal type. When $\Phi$ is an abelian $3$-cocycle on $G$, any Nichols algebra in $^{\k G}_{\k G}\mathcal{Y}\mathcal{D}^\Phi$ is of diagonal type.
In the classification of finite-dimensional pointed Hopf algebras, one is mainly concerned with Nichols algebras in $_{\k G}^{\k G}\mathcal{YD}^\Phi$ with $\Phi$ trivial. Such a Yetter-Drinfeld module category is often written in the form $_{\k G}^{\k G}\mathcal{YD}$. The Nichols algebras in $_{\k G}^{\k G}\mathcal{YD}$ are called usual Nichols algebras in order to distinguish from those in $_{\k G}^{\k G}\mathcal{YD}^\Phi$ with $\Phi$ nontrivial.

Arithmetic root systems are invariants of usual Nichols algebras of diagonal type with certain finiteness property. A complete classification of arithmetic root systems was given in \cite{H4} by Heckenberger. In \cite{HLYY,HLYY2} arithmetic root systems are applied to classify finite-dimensional pointed coquasi-Hopf algebras of diagonal type.

Suppose $B(V)$ is a usual Nichols algebra of diagonal type in $^{\k G}_{\k G}\mathcal{Y}\mathcal{D}.$ Then there is a basis $\{X_i|1\leq i\leq n\}$ of $V$ called canonical basis such that $\k X_i$ is a simple Yetter-Drinfeld module for each $1\leq i\leq n$. Suppose $\delta_L(X_i)=h_i\otimes X_i, 1\leq i\leq n$. The structure constants of $B(V)$ are $\{q_{ij}|1\leq i,j\leq n\}$ such that $h_i\triangleright X_j=q_{ij}X_j.$
Let $E=\{e_i|1\leq i\leq d\}$ be a canonical basis of $\Z^n,$ and $\chi$ be a bicharacter of $\Z^n$ determined by $\chi(e_i,e_j)=q_{ij}.$ As defined in \cite[Sec.3]{H0}, $\Delta^{+}(B(V))$ is the set of degrees of the (restricted) Poincare-Birkhoff-Witt generators counted with multiplicities and $\Delta(B(V)):= \Delta^{+}(B(V))\bigcup -\Delta^{+}(B(V))$, which is called the root system of $B(V)$. Moreover, the triple $(\Delta=\Delta(B(V)),\chi, E)$ is called an arithmetic root system of $B(V)$ if the corresponding Weyl groupoid $W_{\chi, E}$ is full and finite (see \cite[Sec.2,3]{H4}). In this case, we denote this arithmetic root system by $\Delta(B(V))_{\chi,E}$ for brevity.
If there is another arithmetic root system $\Delta_{\chi',E'},$ and an isomorphism $\tau:\Z^n\to \Z^n$ such that
$$\tau(E)=E',\ \ \ \ \chi'(\tau(e),\tau(e))=\chi(e,e),$$
$$\chi'(\tau(e_1),\tau(e_2))\chi'(\tau(e_2),\tau(e_1))=\chi(e_1,e_2)\chi'(e_2,e_1),$$ then we say that $\Delta_{\chi,E}$ and $\Delta_{\chi',E'}$ are twist equivalent.

A generalized Dynkin diagram is an invariant of arithmetic root systems, and it can determine arithmetic root systems up to twist equivalence.

\begin{definition}
The generalized Dynkin diagram of an arithmetic root system $\Delta_{\chi,E}$ is a nondirected graph $\mathcal{D}_{\chi,E}$ with the following properties:
\begin{itemize}
\item[1)]There is a bijective map $\phi$ from $I=\{ 1, 2, \dots, d \}$ to the set of vertices of $\mathcal{D}_{\chi,E}.$
\item[2)]For all $1\leq i\leq d,$ the vertex $\phi(i)$ is labelled  by $q_{ii}.$
\item[3)]For all $1\leq i,j\leq d,$ the number  $n_{ij}$ of edges between $\phi(i)$ and $\phi(j)$ is either $0$ or $1.$ If $i=j$ or $q_{ij}q_{ji}=1$ then $n_{ij}=0,$ otherwise $n_{ij}=1$ and the edge is labelled by $\widetilde{q_{ij}}=q_{ij}q_{ji}$ for all $1\leq i<j\leq n.$
\end{itemize}
\end{definition}
An arithmetic root system $\Delta_{\chi,E}$ is called \emph{connected} if and only if the corresponding generalized Dynkin diagram $\mathcal{D}_{\chi,E}$ is connected. All the generalized Dynkin diagrams of connected arithmetic root systems are listed in \cite{H4}.

\section{Nichols algebras in twisted Yetter-Drinfeld categories}
In this section, we focus on the Nichols algebras in twisted Yetter-Drinfeld module categories $ {_{\k G}^{\k G}\mathcal{Y}\mathcal{D}^{\Phi}}$ with $\Phi$ nonabelian. With a help of \cite{HLYY2}, an explicit description of the finite-dimensional Nichols algebra of a simple nondiagonal twisted Yetter-Drinfeld module is obtained.

\subsection{Some basic facts on Nichols algebras} It is known that there is an $\N$-graded structure on $B(V)$. We will show that there is actually a $\Z^l$-graded structure on $B(V)$ for $V=\oplus_{i=1}^lV_i\in {^{\k G}_{\k G}\mathcal{YD}^\Phi}$, where the $V_i$'s are simple. Let $\{e_i:1\leq i\leq l\}$ be a set of free generators of $\Z^l$. Then we have the following proposition, which in fact is a generalization of \cite[Proposition 4.2]{HLYY2}.

\begin{proposition}\label{p4.1}
There is a $\Z^{l}$-grading on the Nichols algebra $B(V)\in {_{\k G}^{\k G} \mathcal{YD}^\Phi}$ by setting $\deg V_i=e_i$.
\end{proposition}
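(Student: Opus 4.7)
The plan is to lift the $\mathbb{N}$-grading on $B(V)$ to the finer $\mathbb{Z}^l$-grading by encoding the direct sum decomposition $V=\bigoplus_{i=1}^l V_i$ as a torus symmetry of $T_\Phi(V)$, and then to invoke the uniqueness of the defining ideal. First I would equip the tensor algebra $T_\Phi(V)$ with the evident $\mathbb{N}^l$-grading in which $V_i$ sits in multidegree $e_i$; concretely, $(T_\Phi(V))_{(n_1,\dots,n_l)}$ is spanned by iterated tensor products with exactly $n_j$ factors drawn from $V_j$ for each $j$. Since the multiplication is the free tensor product and the braided coproduct is determined by $\Delta(X)=X\otimes 1+1\otimes X$ for $X\in V$, compatibility of this refined grading with the braided Hopf algebra structure on $T_\Phi(V)\in{_{\k G}^{\k G}\mathcal{YD}^\Phi}$ reduces to the assertion that the braiding $R(X\otimes Y)=g_X\triangleright Y\otimes X$ preserves the $V_i$-decomposition. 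This holds because each $V_i$ is itself a subobject of $V$ in $_{\k G}^{\k G}\mathcal{YD}^\Phi$, so the $G$-action stabilizes $V_i$.

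Second, I would realize this $\mathbb{Z}^l$-grading as the weight decomposition for an action of the algebraic torus $(\k^*)^l$. Define $t\cdot v=t_i v$ for $v\in V_i$ and $t=(t_1,\dots,t_l)\in(\k^*)^l$; this is a morphism in $_{\k G}^{\k G}\mathcal{YD}^\Phi$ because scaling each homogeneous summand by a constant is $G$-linear and commutes with the $G$-coaction. Extending multiplicatively to $T_\Phi(V)$ gives an algebra automorphism which is simultaneously a coalgebra automorphism, since the formula $\Delta(X)=X\otimes 1+1\otimes X$ on generators is visibly equivariant and the braiding used to propagate $\Delta$ to products is a morphism in the category. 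Hence each $t$ is a braided Hopf algebra automorphism of $T_\Phi(V)$ preserving the coarser $\mathbb{N}$-grading.

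Third, recall that $I$ is characterized as the \emph{unique} maximal Hopf ideal of $T_\Phi(V)$ generated by elements of $\mathbb{N}$-degree $\geq 2$. For any $t\in(\k^*)^l$, the image $t\cdot I$ is again a Hopf ideal generated by elements of $\mathbb{N}$-degree $\geq 2$, so the uniqueness forces $t\cdot I=I$. Thus $I$ is stable under the whole torus, and the standard weight-space decomposition for a $(\k^*)^l$-action in characteristic zero over an algebraically closed field forces $I$ to split as a sum of its $\mathbb{Z}^l$-homogeneous components. Passing to the quotient, $B(V)=T_\Phi(V)/I$ inherits a $\mathbb{Z}^l$-grading with $\deg V_i=e_i$, as claimed. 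The main obstacle to address carefully is showing that $(\k^*)^l$ really acts by \emph{braided} Hopf algebra automorphisms; the subtlety is that the coproduct on products of generators involves the nontrivial braiding of $_{\k G}^{\k G}\mathcal{YD}^\Phi$, but this compatibility is automatic because each $t$ commutes with both the $G$-action and the $G$-coaction on $V$ and therefore with $R$.
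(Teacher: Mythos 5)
Your argument is correct, but it takes a genuinely different route from the paper. The paper proves the statement by induction on the $\N$-degree of the ideal $I$: assuming the components of $\N$-degree $\le k$ are $\Z^l$-graded, it decomposes an element $X\in I_{k+1}$ into $\Z^l$-homogeneous pieces $X^i$, checks via the coproduct that each $X^i$ could be adjoined to $I$ as a Hopf ideal, and then invokes maximality of $I$ to force $X^i\in I$. You instead reinterpret the $\Z^l$-grading of $T_{\Phi}(V)$ as the weight decomposition for a torus $(\k^*)^l$ acting by braided Hopf algebra automorphisms (which exist precisely because each scaling map is a morphism in $_{\k G}^{\k G}\mathcal{YD}^\Phi$, hence compatible with the braiding by naturality), observe that any such automorphism preserves the $\N$-grading and therefore permutes the maximal graded Hopf ideals generated in degree $\ge 2$, and conclude $t\cdot I=I$ by uniqueness; linear independence of distinct characters (every element lies in finitely many $\Z^l$-homogeneous components, so the action is locally finite and diagonalizable) then splits $I$ into its $\Z^l$-homogeneous parts. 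Both proofs hinge on the same maximality/uniqueness of $I$, but yours replaces the degreewise coproduct analysis by a one-shot functoriality argument: it is shorter, avoids the induction, and generalizes immediately to gradings by any group realized through automorphisms of $V$ in the category; the paper's induction is more elementary and self-contained, making explicit exactly which coproduct computations are needed. The only points you should state a bit more carefully are that $t\cdot I$ is again a \emph{graded} Hopf ideal generated in $\N$-degrees $\ge 2$ (clear, since the torus preserves the $\N$-grading) and that the weight-space splitting of $I$ uses local finiteness of the action together with linear independence of characters over the infinite field $\k$; neither is a gap, just a line each to add.
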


\begin{proof}
Obviously, there is a $\Z^{l}$-grading on the tensor algebra $T_{\Phi}(V)\in {_{\k G}^{\k G} \mathcal{YD}^\Phi}$ by assigning $\deg V_i=e_i$, that is $\deg(X)=e_i$ for all $X\in V_i$, $1\leq i\leq n$. Let $I=\oplus_{i\geq 1} I_i$ be the maximal graded Hopf ideal generated by $\N$-homogeneous elements of degree greater than or equal to $2.$ To prove that $B(V)$ is $\Z^l$-graded, it suffices to prove that $I$ is $\Z^l$-graded. This will be done by induction on the $\N$-degree.

Since $I=\oplus_{i\geq 1} I_i$ is generated by $\N$-homogeneous elements of degree greater than or equal to $2$, it is obvious that $I_1=0$. Hence $I_1$ is  $\Z^l$-graded.

Now suppose that $I^k:=\oplus_{1\leq i\leq k} I_i$ is $\Z^l$-graded. We shall prove that $I^{k+1}=\oplus_{1\leq i\leq k+1} I_i$ is also $\Z^l$-graded. Let $X\in I_{k+1}$ and $X=X^1+X^2+\cdots +X^n,$ with each $X^i$ being $\Z^l$-homogenous and $X^i$ and $X^j$ having different $\Z^l$-degrees if $i\neq j.$ Write $\Delta(X^i)=X^i\otimes 1+1\otimes X^i+(X^i)_1\otimes (X^i)_2.$ Since $\Delta(X)=X\otimes 1+1\otimes X+(X)_1\otimes (X)_2,$ where $(X)_1\otimes (X)_2\in T_{\Phi}(V)\otimes I^k+I^k\otimes T_{\Phi}(V),$ i.e., $\sum (X^i)_1\otimes (X^i)_2\in T_{\Phi}(V)\otimes I^k+I^k\otimes T_{\Phi}(V).$ According to the inductive assumption, $T_{\Phi}(V)\otimes I^k+I^k\otimes T_{\Phi}(V)$ is a $\Z^l$-graded space. So each $(X^i)_1\otimes (X^i)_2\in T_{\Phi}(V)\otimes I^k+I^k\otimes T_{\Phi}(V)$ as $\Delta$ preserves $\Z^l$-degrees. If there was an $X^i\notin I_{k+1},$ then $I+\langle X^i\rangle$ is a Hopf ideal properly containing $I,$ which contradicts to the maximality of $I.$ It follows that $X^i\in I_{k+1}$ for all $1\leq i\leq n$ and hence $I^{k+1}$ is also $\Z^l$-graded by the assumption on $X.$ This completes the proof of the proposition.
\end{proof}

If $B(V)$ is a Nichols algebra in $_{\k G}^{\k G}\mathcal{YD}^\Phi$, we say that $G$ is the {\bf base group} of $B(V)$.
From Definition \ref{d2.9} we know that a Nichols algebra depends on both the base group $G$ and the $3$-cocycle $\Phi$. But sometimes we are only concerned about the braided Hopf algebra structure of a Nichols algebra. Hence we need to omit or change the base group of the Nichols algebra in the sense of the following definition.

\begin{definition}
Let $B(V)$ and $B(U)$ be Nichols algebras in $_{\k G}^{\k G}\mathcal{YD}^\Phi$  and $_{\k H}^{\k H}\mathcal{YD}^\Psi$ respectively with $\dim V=\dim U=l.$ We say that $B(V)$ is isomorphic to $B(U)$ if there is a $\Z^l$-graded linear isomorphism $\mathcal{F}:B(V)\to B(U)$ which preserves the multiplication and comultiplication.
\end{definition}

\begin{definition}
Let $V=\oplus_{i=1}^n V_i\in {^{\k G}_{\k G}\mathcal{YD}^\Phi}$ be a Yetter-Drinfeld module, where $V_i \ (1\leq i\leq n)$ are simple Yetter-Drinfeld modules. Let $g_i$ be the corresponding degree of $V_i$. Then we call the subgroup $G'=\langle g_1,\cdots,g_n\rangle,$ generated by $g_1,\ldots, g_n,$ the support group of $V$, which is denoted by $G_{V}$.
\end{definition}

\begin{lemma}\cite[Lemma 4.4]{HLYY2}\label{l3.4}
Suppose $V\in {_{\k G}^{\k G}\mathcal{YD}^\Phi}$ and $U\in { _{\k H}^{\k H}\mathcal{YD}^\Psi}$, where $H$ is a finite abelian group.  Let $G|_V$ and $H|_U$ be the support groups of $V$ and $U$ respectively. If there is a linear isomorphism $F:V\to U$ and a group epimorphism $f:G|_V\to H|_U$ such that:
\begin{eqnarray}
&\delta\circ F=(f\otimes F)\circ \delta,\label{eq4.1} \\
& F(g\triangleright v)=f(g)\triangleright F(v),\label{eq4.2}\\
& \Phi|_{G|_V}=f^*\Psi|_{H|_U}
\end{eqnarray}
for any $g\in G|_V,$ $v\in V.$ Then $B(V)$ is isomorphic to $B(U).$
\end{lemma}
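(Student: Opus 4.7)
The plan is to extend $F$ to a map $\widetilde{F}\colon T_\Phi(V) \to T_\Psi(U)$ between the tensor algebras and verify that it descends to an isomorphism of braided Hopf algebras in the sense of Definition 3.2. First I would define $\widetilde{F}$ on generators by $\widetilde{F}|_V = F$ and extend it multiplicatively, obtaining a $\Z^l$-graded linear isomorphism with respect to the grading of Proposition \ref{p4.1}. Here $l = \dim V = \dim U$ by hypothesis, and $F$ respects the decomposition into simple summands because condition \eqref{eq4.1} implies that $F$ sends the $G|_V$-isotypic components to the $H|_U$-isotypic components.

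The heart of the argument is checking that $\widetilde{F}$ is compatible with the multiplication and comultiplication, bearing in mind that these structures live in different braided tensor categories. The product in $T_\Phi(V)$ is controlled by the braiding and the associativity constraint of ${_{\k G}^{\k G}\mathcal{YD}^\Phi}$, which depend on the $G|_V$-grading and $G|_V$-action on $V$ and on $\Phi|_{G|_V}$. Conditions \eqref{eq4.1} and \eqref{eq4.2} ensure that $F$ intertwines these data via $f$, while the third condition $\Phi|_{G|_V} = f^*\Psi|_{H|_U}$ ensures that the associators pull back correctly. Concretely, the braiding $R(X\otimes Y) = g_X \triangleright Y \otimes X$ is carried by $F\otimes F$ to $f(g_X)\triangleright F(Y)\otimes F(X)$, which is precisely the braiding applied to $F(X)\otimes F(Y)$ in $U\otimes U$, and $\Phi(g_X,g_Y,g_Z) = \Psi(f(g_X),f(g_Y),f(g_Z))$. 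The comultiplication is then automatically respected, since it is determined by the primitive condition $\Delta(v) = v\otimes 1 + 1\otimes v$ on $V$ together with the braided bialgebra axioms.

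To conclude I would invoke a standard ideal argument. Let $I\subset T_\Phi(V)$ and $J\subset T_\Psi(U)$ be the maximal graded Hopf ideals generated in degrees $\ge 2$. Since $\widetilde{F}$ is an isomorphism of graded braided Hopf algebras, $\widetilde{F}(I)$ is a graded Hopf ideal of $T_\Psi(U)$ generated in degrees $\ge 2$, hence $\widetilde{F}(I) \subset J$; applying the same reasoning to $\widetilde{F}^{-1}$ gives equality, so $\widetilde{F}$ descends to the desired $\Z^l$-graded isomorphism $B(V)\cong B(U)$. The main obstacle will be the categorical bookkeeping: the two tensor algebras live in distinct braided tensor categories, so the "multiplicativity" of $\widetilde{F}$ must be interpreted with care at each bracketing of tensor factors. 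However, the three compatibility conditions are tailored precisely to make the braiding and associativity data match up under $(f,F)$, so once the categorical setup is correctly in place the verification reduces to routine cocycle manipulations.
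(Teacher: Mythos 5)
The paper offers no proof of this lemma (it is quoted from \cite{HLYY2}), but your argument is correct and is essentially the intended one: conditions \eqref{eq4.1}, \eqref{eq4.2} and $\Phi|_{G|_V}=f^*\Psi|_{H|_U}$ let you transport the $G$-grading, the action, the braiding and the associator data of $T_{\Phi}(V)$ along $(F,f)$, so the multiplicative extension $\widetilde{F}$ is a graded isomorphism compatible with product and coproduct, and maximality of the defining ideals forces $\widetilde{F}(I)=J$, giving $B(V)\cong B(U)$. The only point worth making explicit is that the maximal graded ideal-and-coideal generated in degrees $\geq 2$ is automatically homogeneous for the group grading and stable under the group action (so it coincides with the categorical Hopf ideal of Definition \ref{d2.9}), which is what legitimizes comparing $\widetilde{F}(I)$ with $J$ by maximality.
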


\begin{corollary}\label{c3.5}
Let $B(V)$ be a Nichols algebra in $_{\k G}^{\k G}\mathcal{YD}^\Phi$, $H=G_V$ and $\Psi=\Phi|_{G_V}.$ Then there is a Yetter-Drinfeld module $U$ in $_{\k H}^{\k H}\mathcal{YD}^\Psi$ such that $B(V)\cong B(U)$.
\end{corollary}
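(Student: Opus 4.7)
This should be a direct application of Lemma \ref{l3.4}: the plan is to take $U$ to be the same underlying vector space as $V$, equipped with the $H$-comodule and $H$-module structure obtained by restricting those of $V$ to $H=G_V$, and then to verify that the identity maps satisfy the hypotheses of the lemma.

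First I would construct $U$ explicitly. Write $V=\bigoplus_{i=1}^n V_{g_i}$ with $g_1,\ldots,g_n\in G_V=H$, so the $G$-grading of $V$ is already concentrated on $H$; setting $U:=V$ as a vector space with the induced $H$-grading $\delta_U=\delta_V$ gives a well-defined $\k H$-comodule. For the $H$-action, simply restrict the $G$-action on $V$ to $H$. Because $\Psi=\Phi|_{G_V}$, the associated $2$-cocycle $\widetilde{\Psi}_g$ coincides with $\widetilde{\Phi}_g|_{H\times H}$ for every $g\in H$, so the projective representation identity \eqref{eq2.6} for $V$ automatically yields the corresponding identity for $U$, and $U$ is an object of ${}_{\k H}^{\k H}\mathcal{YD}^{\Psi}$.

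Next, I would apply Lemma \ref{l3.4} with $F=\id_V\colon V\to U$ and $f=\id_H\colon G_V\to H_U$. By construction the support group of $U$ in ${}_{\k H}^{\k H}\mathcal{YD}^{\Psi}$ is again $G_V=H$, so $f$ is a (trivially surjective) group epimorphism. The three compatibility conditions required by Lemma \ref{l3.4} are then immediate: $\delta\circ F=(f\otimes F)\circ\delta$ and $F(g\triangleright v)=f(g)\triangleright F(v)$ hold because both $F$ and $f$ are identity maps, and the cocycle condition $\Phi|_{G_V}=f^{*}\Psi|_{H_U}$ is nothing but the defining equality $\Psi=\Phi|_{G_V}$. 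Lemma \ref{l3.4} then yields $B(V)\cong B(U)$.

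The only point requiring any care is the check that restricting both the action and the $3$-cocycle really produces a twisted Yetter--Drinfeld module, and, as noted above, this reduces to the trivial identity $\widetilde{\Psi}_g=\widetilde{\Phi}_g|_{H\times H}$ for $g\in H$. There is no substantive obstacle; the corollary is best viewed as a convenient repackaging of Lemma \ref{l3.4} that will allow us in the sequel to replace the ambient group $G$ by the (typically much smaller) support group $G_V$.
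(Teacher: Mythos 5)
Your proposal is correct and coincides with the paper's own argument: the paper likewise takes $U=V$ with the module and comodule structures inherited (restricted) from $V$ and concludes $B(V)\cong B(U)$ by Lemma \ref{l3.4}. Your extra verification that the restriction gives an object of ${}_{\k H}^{\k H}\mathcal{YD}^{\Psi}$ via $\widetilde{\Psi}_g=\widetilde{\Phi}_g|_{H\times H}$ is just a spelled-out version of what the paper leaves implicit.
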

\begin{proof}
Let $U=V$ as linear space with module and comodule structures inherited from those of $V$. Then $U$ is a Yetter-Drinfeld module in $_{\k H}^{\k H}\mathcal{YD}^\Psi$, and $B(V)$ is isomorphic to $B(U)$ by Lemma \ref{l3.4}.
\end{proof}

 Next we will introduce the twist of a Nichols algebra. Let $(V,\triangleright, \delta_{L})\in {^{\k G}_{\k G}\mathcal{YD}^\Phi}$,  and let $J$ be a $2$-cochain of $G$. Then we can define a new action $\triangleright_J$ of $G$ on $V$ by
\begin{equation}\label{3.4}
g\triangleright_J X=\frac{J(g,x)}{J(x,g)}g\triangleright X
\end{equation}
 for $X\in V$ and $g\in G.$  We denote $(V, \triangleright_{J},\delta_L)$ by $V^{J}$, and by definition we have $V^J\in {^{\k G}_{\k G}\mathcal{Y}\mathcal{D}^{\Phi\ast \partial(J)}}.$ Moreover there is a tensor equivalence $(F_J,\varphi_0,\varphi_2):\ ^{\k G}_{\k G}\mathcal{Y}\mathcal{D}^\Phi \to {^{\k G}_{\k G}\mathcal{Y}\mathcal{D}^{\Phi\ast\partial (J)}}$ which takes $V$ to $V^J$ and $$\varphi_2(U,V):(U\otimes V)^J\to U^J\otimes V^J,\ \ Y\otimes Z\mapsto J(y,z)^{-1}Y\otimes Z$$ for $Y\in U,\  Z\in V.$

 Let $B(V)$ be a usual Nichols algebra in $^{\k G}_{\k G}\mathcal{Y}\mathcal{D}.$ Then it is clear that $B(V)^J$ is a Hopf algebra in $^{\k G}_{\k G}\mathcal{Y}\mathcal{D}^{\partial J}$ with multiplication $\circ$ determined by
\begin{equation}
X\circ Y=J(x,y)XY
\end{equation} for all homogenous elements $X,Y\in B(V),$ here $x=\deg X, \ y=\deg Y$ are the associated $G$-degrees as defined in Subsection \ref{ss2.3}. Using the same terminology as for coquasi-Hopf algebras , we call $B(V)$ and $B(V)^{J}$ twist equivalent. The following fact is obvious.
\begin{lemma}\cite[Lemma 2.12]{HLYY2}\label{l3.6}
The twisting $B(V)^J$ of $B(V)$ is a Nichols algebra in $^{\k G}_{\k G}\mathcal{Y}\mathcal{D}^{\partial J}$ and $B(V)^J\cong B(V^J)$.
\end{lemma}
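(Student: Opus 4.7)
The plan is to exploit the braided tensor equivalence $(F_J,\varphi_0,\varphi_2)\colon {}^{\k G}_{\k G}\mathcal{YD}\to {}^{\k G}_{\k G}\mathcal{YD}^{\partial J}$ introduced just above the lemma statement. Since the Nichols algebra of a twisted Yetter-Drinfeld module is characterized intrinsically, in purely categorical terms, as the $\N$-graded connected braided Hopf algebra generated in degree one whose space of primitives coincides with the degree-one part, a braided tensor equivalence must carry Nichols algebras to Nichols algebras. So both assertions of the lemma should follow from a transport-of-structure argument along $F_J$.

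First I would apply $F_J$ to the braided Hopf algebra $B(V)\in {}^{\k G}_{\k G}\mathcal{YD}$ to obtain a braided Hopf algebra $F_J(B(V))\in {}^{\k G}_{\k G}\mathcal{YD}^{\partial J}$, whose multiplication is the composite
\[
F_J(B(V))\otimes F_J(B(V)) \xrightarrow{\;\varphi_2^{-1}\;} F_J(B(V)\otimes B(V)) \xrightarrow{\;F_J(m)\;} F_J(B(V)).
\]
Since $\varphi_2$ scales homogeneous tensors by $J(y,z)^{-1}$, its inverse scales by $J(y,z)$, so the resulting product on homogeneous elements is $X\circ Y=J(x,y)XY$. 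This matches precisely the definition of $B(V)^J$. The comultiplication transfers automatically via the tensor structure of $F_J$, and $F_J$ being a braided functor ensures compatibility with the braiding. Hence $F_J(B(V))=B(V)^J$ as braided Hopf algebras in ${}^{\k G}_{\k G}\mathcal{YD}^{\partial J}$, which gives the first assertion.

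For the second assertion, note that $F_J(V)=V^J$ by construction, and $F_J$ preserves the $\N$-grading, the property of being generated in degree one, and the property that the space of primitives equals the degree-one piece (this last is categorical, since primitives are defined via $\Delta$ and $F_J$ bijects them through the tensor isomorphism $\varphi_2$). Consequently $B(V)^J$ satisfies the defining universal property of the Nichols algebra of $V^J$, and by the uniqueness of Nichols algebras one concludes $B(V)^J\cong B(V^J)$.

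The main obstacle is bookkeeping: one must verify that the maximality condition on the defining ideal in Definition \ref{d2.9} is preserved under $F_J$. This reduces to the fact that $F_J$ is an equivalence of categories and hence sends graded Hopf ideals to graded Hopf ideals bijectively, so maximal ones correspond. All the remaining cocycle manipulations (for instance, checking that $\partial J$-compatibility of the new action $\triangleright_J$ in \eqref{3.4} matches the twisted coproduct of $B(V)^J$) are routine and already encoded in the fact that $F_J$ is a well-defined braided tensor equivalence.
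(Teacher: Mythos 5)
Your transport-of-structure argument along $(F_J,\varphi_0,\varphi_2)$ is correct and is essentially the intended justification: the paper gives no proof here (it labels the fact obvious and cites [HLYY2, Lemma 2.12]), and the standard argument is exactly this twisting computation, with $\varphi_2^{-1}$ producing the product $X\circ Y=J(x,y)XY$ that defines $B(V)^J$ and with graded Hopf ideals (hence the maximal one) corresponding under the equivalence. The only point you pass over silently is that $F_J$ is \emph{braided}, not merely a tensor equivalence as stated in the text; this is a one-line check comparing \eqref{3.4} with the formula for $\varphi_2$, so nothing essential is missing.
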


\subsection{Nichols algebras of diagonal type} \label{sub3.2}
In general, there are both Nichols algebras of diagonal type and of nondiagonal type in  $_{\k G}^{\k G}\mathcal{YD}^\Phi$ if $\Phi$ is nonabelian.
Recall that for a simple twisted Yetter-Drinfeld module $V$ in ${_{\k G}^{\k G}\mathcal{Y}\mathcal{D}^{\Phi}}$, there exists a $g\in G$ such that $\delta_L(X)=g \otimes X$ for all $X \in V$ and in this case we write $g_V:=g.$

\begin{example}
Let $G=\langle g_1\rangle \times \langle g_2\rangle \times\langle g_3\rangle \times\langle g_4\rangle=\Z_{m_1}\times \Z_{m_2}\times \Z_{m_3}\times \Z_{m_4}$ such that $m_i|m_j$ if $1\leq i<j\leq 4$, $\Phi$ a $3$-cocycle on $G$ given by
\begin{equation}
\Phi(g_1^{i_1}\cdots g_4^{i_4},g_1^{j_1}\cdots g_4^{j_4},g_1^{k_1}\cdots g_4^{k_4})
=\zeta_{m_1}^{k_1j_2i_3}.
\end{equation}
Let $U$ and $V$ be two simple twisted Yetter-Drinfeld modules in $_{\k G}^{\k G}\mathcal{YD}^\Phi$ such that $g_U=g_1, \ g_V=g_4$. Then $\widetilde{\Phi}_{g_1}$ is not symmetric since $\widetilde{\Phi}_{g_1}(g_2,g_3)\neq \widetilde{\Phi}_{g_1}(g_3,g_2)$, and $\widetilde{\Phi}_{g_4}$ is symmetric. Hence by Remark \ref{rm2.4}, $U$ is of nondiagonal type, while $V$ is of diagonal type.
\end{example}

The following lemma says that the study of Nichols algebras of diagonal type can always be reduced to those in a suitable twisted Yetter-Drinfeld module category $_{\k H}^{\k H}\mathcal{YD}^\Psi$ such that $\Psi$ is an abelian $3$-cocycle on $H$.

\begin{lemma}\cite[Lemma 4.1]{HLYY2}\label{l3.8}
Let $B(V)$ be a Nichols algebras of diagonal type in $_{\k G}^{\k G}\mathcal{YD}^\Phi$. Then $\Phi|_{G_V}$ is an abelian $3$-cocycle on $G_V$.
\end{lemma}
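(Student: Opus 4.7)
The plan is to reduce the question to the symmetry of the associated $2$-cocycles $\widetilde{\Phi}_g$, exploit the multiplicativity identity $\widetilde{\Phi}_g\widetilde{\Phi}_h=\widetilde{\Phi}_{gh}$ to propagate the symmetry across all of $G_V$, and then recognise that symmetry on $G_V$ is precisely what makes $\Phi|_{G_V}$ abelian via Remark \ref{rm2.4}.

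First I would write $V=\bigoplus_{i=1}^{n}\k X_i$ with each $\k X_i$ a simple one-dimensional object of $_{\k G}^{\k G}\mathcal{YD}^\Phi$, and set $g_i\in G$ such that $\delta_L(X_i)=g_i\otimes X_i$, so that $G_V=\langle g_1,\ldots,g_n\rangle$ by definition. Since $\k X_i$ is one-dimensional and simple, Remark \ref{rm2.4} forces $\widetilde{\Phi}_{g_i}(x,y)=\widetilde{\Phi}_{g_i}(y,x)$ for all $x,y\in G$ and each $1\le i\le n$.

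Next I would use the identity $\widetilde{\Phi}_g\widetilde{\Phi}_h=\widetilde{\Phi}_{gh}$ recorded after Proposition \ref{p2.5}. Because the set of symmetric $2$-cochains on $G$ is closed under pointwise multiplication (and inversion), an easy induction on the word length in the generators $g_1,\ldots,g_n$ shows that $\widetilde{\Phi}_g$ is symmetric for \emph{every} $g\in G_V$. In particular, restricting to $G_V\times G_V$, the $2$-cocycle $\widetilde{\Phi}_g|_{G_V\times G_V}$ is symmetric for each $g\in G_V$.

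Finally, directly from the formula defining $\widetilde{\Phi}_g$ in terms of $\Phi$, one has $\widetilde{(\Phi|_{G_V})}_g = \widetilde{\Phi}_g|_{G_V\times G_V}$ for all $g\in G_V$, which by the previous step is symmetric. Applying Remark \ref{rm2.4} inside the category $_{\k G_V}^{\k G_V}\mathcal{YD}^{\Phi|_{G_V}}$, this means that every simple object of $_{\k G_V}^{\k G_V}\mathcal{YD}^{\Phi|_{G_V}}$ is one-dimensional, i.e.\ $\Phi|_{G_V}$ is abelian by the definition given just before Proposition \ref{p2.6}. The one step that deserves the most care is the induction on word length, since it uses that being a symmetric $2$-cochain is preserved under multiplication and inverse; this is elementary but is the real mechanism that propagates the one-dimensionality of the summands of $V$ into a property of the restricted $3$-cocycle.
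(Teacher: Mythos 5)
The paper itself gives no proof of this lemma -- it is imported from \cite[Lemma 4.1]{HLYY2} -- so your argument should be judged as a self-contained derivation, and as such it is correct and is exactly in the spirit of how the authors argue later (compare Lemma \ref{l5.4}, which likewise deduces abelianness of $\Phi$ from symmetry of the $2$-cocycles $\widetilde{\Phi}_{g_i}$ attached to a generating set). Your chain is sound: diagonality gives one-dimensional simple summands $\k X_i$ of degrees $g_i$, Remark \ref{rm2.4} turns this into symmetry of $\widetilde{\Phi}_{g_i}$ on all of $G$, multiplicativity of $g\mapsto\widetilde{\Phi}_g$ propagates symmetry to every element of $G_V=\langle g_1,\dots,g_n\rangle$, the tilde construction visibly commutes with restriction to $G_V$, and Remark \ref{rm2.4} applied over the base group $G_V$ yields that every simple object of $_{\k G_V}^{\k G_V}\mathcal{YD}^{\Phi|_{G_V}}$ is one-dimensional, i.e.\ $\Phi|_{G_V}$ is abelian. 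The one step that needs an extra sentence is the use of $\widetilde{\Phi}_g\widetilde{\Phi}_h=\widetilde{\Phi}_{gh}$: the paper verifies this identity only for the representative cocycles $\omega_{\underline{\mathbf{c}}}$ of Proposition \ref{p2.26} (it also sits just before, not after, Proposition \ref{p2.5}), and for an arbitrary $3$-cocycle it can fail on the nose, since multiplying $\Phi$ by a $3$-coboundary multiplies each $\widetilde{\Phi}_g$ by a symmetric $2$-cochain. So strictly you should first replace $\Phi$ by its cohomologous representative -- harmless, because that symmetric correction factor affects neither the symmetry of any $\widetilde{\Phi}_g$ nor the abelianness of the restriction -- or, equivalently, observe that the antisymmetrization $(x,y)\mapsto\widetilde{\Phi}_g(x,y)/\widetilde{\Phi}_g(y,x)$ is genuinely multiplicative in $g$ for every $\Phi$, which is all your induction on word length actually uses. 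With that sentence added, your proof is complete and matches the level of rigor at which the paper itself handles this identity.
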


Suppose $\mathbb{G}=\mathbb{Z}_{\mathbbm{m}_1}\times\cdots\times \mathbb{Z}_{\mathbbm{m}_n} = \langle \mathbbm{g}_1 \rangle \times \cdots \langle \mathbbm{g}_n \rangle $ and $G=\mathbb{Z}_{m_1}\times\cdots\times \mathbb{Z}_{m_n} = \langle g_1 \rangle \times \cdots \langle g_n\rangle $ where $\mathbbm{m}_i=m^2_i$ for $1\leq i\leq n.$ Let
\begin{equation}\label{e4.4}\pi:\;\k\mathbbm{G}\to \k G,\;\;\;\;\mathbbm{g}_{i}\mapsto g_{i},\;\;\;\;\;1\le i\le n\end{equation}
be the canonical epimorphism. Then we have

\begin{proposition}\cite[Proposition 3.15]{HLYY2}\label{p3.9}
Suppose that $\Phi$ is an abelian $3$-cocycle on $G.$ Then $\pi^*(\Phi)$ is a $3$-coboundary on $\mathbbm{G}$.
\end{proposition}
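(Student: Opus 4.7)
The plan is to exploit the explicit form of abelian $3$-cocycles given in Proposition \ref{p2.6} and to exhibit an explicit $2$-cochain on $\mathbb{G}$ whose coboundary equals $\pi^*(\Phi)$.

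By Proposition \ref{p2.6}, we may assume up to cohomology that $\Phi = \prod_{l=1}^{n} \Phi_l \cdot \prod_{1\le s<t\le n} \Phi_{st}$, where $\Phi_l$ is the diagonal piece $\zeta_{m_l}^{c_l i_l [\frac{j_l+k_l}{m_l}]}$ and $\Phi_{st}$ is the cross piece $\zeta_{m_t}^{c_{st} i_t [\frac{j_s+k_s}{m_s}]}$. Since the set of $3$-coboundaries on $\mathbb{G}$ is closed under pointwise multiplication, it suffices to show that each $\pi^*(\Phi_l)$ and each $\pi^*(\Phi_{st})$ is a coboundary.

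For a diagonal piece, write every element of $\mathbb{Z}_{\mathbbm{m}_l}=\mathbb{Z}_{m_l^2}$ uniquely as $\mathbbm{g}_l^{m_l q+r}$ with $q,r\in\{0,\dots,m_l-1\}$, and define
\[
J_l(\mathbbm{g}_1^{a_1}\cdots\mathbbm{g}_n^{a_n},\; \mathbbm{g}_1^{b_1}\cdots\mathbbm{g}_n^{b_n}) \;=\; \zeta_{m_l}^{c_l\,(a_l \bmod m_l)\,[b_l/m_l]},
\]
where all $a_i,b_i$ denote canonical integer lifts in $\{0,\dots,m_i^2-1\}$ and $[\,\cdot\,]$ denotes the integer part. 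A direct computation of $\partial J_l$, using the decomposition
\[
[(b_l+c_l)/m_l] \;=\; [b_l/m_l]+[c_l/m_l]+\bigl[\tfrac{(b_l\bmod m_l)+(c_l\bmod m_l)}{m_l}\bigr]-m_l\bigl[\tfrac{b_l+c_l}{m_l^2}\bigr],
\]
shows that $\partial J_l=\pi^*(\Phi_l)$. The crucial cancellation is that the ``secondary carry'' $m_l[\tfrac{b_l+c_l}{m_l^2}]$ is annihilated by $\zeta_{m_l}^{m_l}=1$, while the ``primary carry'' $[\tfrac{(b_l\bmod m_l)+(c_l\bmod m_l)}{m_l}]$ supplies precisely the exponent of $\Phi_l$.

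For a cross piece, the analogous cochain $J_{st}(\vec a,\vec b)=\zeta_{m_t}^{c_{st}(a_t \bmod m_t)\,[b_s/m_s]}$ directly yields $\partial J_{st}=\pi^*(\Phi_{st})$ in the symmetric case $m_s=m_t$. In the asymmetric case $m_s\ne m_t$, the same computation leaves a ``carry-of-carries'' residual of the form $\zeta_{m_t}^{-c_{st}m_s(a_t\bmod m_t)[(b_s+\gamma_s)/m_s^2]}$, which is itself a $3$-cocycle and can be killed by an auxiliary $2$-cochain taking values in $\mathrm{lcm}(m_s,m_t)$-th roots of unity; the restriction $c_{st}<\gcd(m_s,m_t)$ ensures such a correction is well defined. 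Taking $J$ to be the product of all these cochains (with corrections) then yields $\partial J=\pi^*(\Phi)$, proving the claim.

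I expect the main technical obstacle to be the asymmetric cross case: one must arrange several layers of carries (modulo $m_s$, $m_s^2$, $m_t$, and $m_t^2$) so that the $\zeta_{m_s}$- and $\zeta_{m_t}$-contributions cancel in just the right way. The essential inputs are the squared structure $\mathbbm{m}_i=m_i^2$ of $\mathbb{G}$, which creates enough room to absorb the carries of $G$-cocycles, and the divisibility restriction on the cross parameter $c_{st}$.
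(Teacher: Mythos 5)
Your overall route is fine, and to be clear the paper itself offers no proof of this proposition (it is imported by citation from \cite{HLYY2}), so your explicit-cochain computation is judged on its own terms. The diagonal part is correct: with $J_l(\vec a,\vec b)=\zeta_{m_l}^{c_l(a_l\bmod m_l)[b_l/m_l]}$ one indeed gets $\partial J_l=\pi^*(\Phi_l)$, the ``secondary carry'' being annihilated by $\zeta_{m_l}^{m_l}=1$ exactly as you say (your displayed carry identity should have $[\,((b_l+c_l)\bmod m_l^2)/m_l\,]$ on the left-hand side, but the discrepancy sits precisely in the term that dies, so this is only a notational slip). The reduction to the normal form of Proposition \ref{p2.6} and the use of multiplicativity of coboundaries are also unobjectionable.

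The genuine gap is the cross term, which is the only place where real work is needed. Carrying out your computation for $J_{st}(\vec a,\vec b)=\zeta_{m_t}^{c_{st}(a_t\bmod m_t)[b_s/m_s]}$ gives, for arguments $\vec a,\vec b,\vec d$, the identity $\partial J_{st}=\pi^*(\Phi_{st})\cdot\zeta_{m_t}^{-c_{st}m_s a_t[\frac{b_s+d_s}{m_s^2}]}$; the residual is trivial if and only if $m_t/\gcd(m_s,m_t)$ divides $c_{st}$ (so in particular when $m_s=m_t$, as you note), but in general it is a nontrivial $3$-cocycle, and you merely assert that it ``can be killed.'' The justification you give does not hold: the bound $c_{st}<\gcd(m_s,m_t)$ plays no role in the existence of the correcting cochain (the correction exists for every integer $c_{st}$), and its values cannot always be taken among $\operatorname{lcm}(m_s,m_t)$-th roots of unity (already for $m_s=2$, $m_t=4$, $c_{st}=1$ one is forced to use a primitive $8$-th root). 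The missing step is short and should be made explicit: the residual equals $\chi(x)^{[\frac{b_s+d_s}{m_s^2}]}$, where $\chi$ is the character of $\mathbbm{G}$ with $\chi(\mathbbm{g}_t)=\zeta_{m_t}^{-c_{st}m_s}$ and $\chi(\mathbbm{g}_i)=1$ for $i\neq t$; for any character $\mu$ of $\mathbbm{G}$ the $2$-cochain $K(x,y)=\mu(x)^{b_s}$ satisfies $\partial K(x,y,z)=\mu(x)^{-m_s^2[\frac{b_s+d_s}{m_s^2}]}$, so it suffices to choose $\mu$ supported on the $t$-th factor with $\mu^{m_s^2}=\chi$, i.e.\ $\mu(\mathbbm{g}_t)^{m_s^2}=\zeta_{m_t}^{-c_{st}m_s}$. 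Such a $\mu$ exists among $m_t^2$-th roots of unity because $\gcd(m_s^2,m_t^2)=\gcd(m_s,m_t)^2$ divides $c_{st}m_sm_t$. With this correction $\partial(J_{st}K)=\pi^*(\Phi_{st})$, and your argument is complete.
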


Let $\delta_L$ and $\triangleright$ be the comodule and the module structure maps of $V\in {_{\k G}^{\k G}\mathcal{YD}^{\Phi}}$. Define
\begin{eqnarray*}
&&\rho_{L}:\;V \to \k \mathbbm{G}\otimes V,\;\;\;\;\rho_{L}=(\iota\otimes \id)\delta_{L}\\
&&\blacktriangleright:\;\k \mathbbm{G}\otimes V\to V,\;\;\;\;\mathbbm{g}\blacktriangleright Z=\pi(\mathbbm{g})\triangleright Z
\end{eqnarray*}
for all $\mathbbm{g}\in \mathbbm{G}$ and $Z\in V$. Then the following observation is immediate.
 \begin{lemma}\label{l4.9} Defined in this way, $(V, \rho_{L}, \blacktriangleright),$ denoted simply by $\widetilde{V}$ in the following, is an object in $_{\k \mathbbm{G}}^{\k \mathbbm{G}}\mathcal{YD}^{\pi^*(\Phi)}$.
 \end{lemma}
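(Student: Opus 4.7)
The plan is to check, one by one, the three conditions characterizing an object of $_{\k\mathbbm{G}}^{\k\mathbbm{G}}\mathcal{YD}^{\pi^*(\Phi)}$ as recalled in Proposition \ref{p2.4}: the comodule axioms for $\rho_L$, the projective representation axiom \eqref{eq2.6} for $\blacktriangleright$ with respect to $\widetilde{\pi^*(\Phi)}_{\mathbbm{g}}$, and the grading compatibility (that is, $\blacktriangleright$ stabilizes each $\mathbbm{G}$-homogeneous component). The only choice to be pinned down is the splitting $\iota\colon \k G \to \k\mathbbm{G}$, for which the natural convention is $g_1^{k_1}\cdots g_n^{k_n}\mapsto \mathbbm{g}_1^{k_1}\cdots \mathbbm{g}_n^{k_n}$ with $0\le k_i<m_i$. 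Since each $\iota(g)$ is group-like in $\k\mathbbm{G}$, coassociativity and counitality of $\rho_L=(\iota\otimes\id)\delta_L$ are immediate, and the induced $\mathbbm{G}$-grading $\widetilde{V}=\bigoplus_{\mathbbm{g}\in\mathbbm{G}}\widetilde{V}_{\mathbbm{g}}$ satisfies $\widetilde{V}_{\iota(g)}=V_g$ and $\widetilde{V}_{\mathbbm{g}}=0$ for $\mathbbm{g}\notin\iota(G)$.

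For the projective representation axiom, fix $v\in\widetilde{V}_{\mathbbm{g}}$ and $\mathbbm{e},\mathbbm{f}\in\mathbbm{G}$. Because $\pi$ is a group homomorphism, the definition of $\blacktriangleright$ gives
\begin{equation*}
\mathbbm{e}\blacktriangleright(\mathbbm{f}\blacktriangleright v)=\pi(\mathbbm{e})\triangleright(\pi(\mathbbm{f})\triangleright v)=\widetilde{\Phi}_{\pi(\mathbbm{g})}(\pi(\mathbbm{e}),\pi(\mathbbm{f}))\,\pi(\mathbbm{e}\mathbbm{f})\triangleright v=\widetilde{\Phi}_{\pi(\mathbbm{g})}(\pi(\mathbbm{e}),\pi(\mathbbm{f}))\,(\mathbbm{e}\mathbbm{f})\blacktriangleright v,
\end{equation*}
where the middle equality uses \eqref{eq2.6} applied in $_{\k G}^{\k G}\mathcal{YD}^{\Phi}$. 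So everything reduces to
\begin{equation*}
\widetilde{\pi^*(\Phi)}_{\mathbbm{g}}(\mathbbm{e},\mathbbm{f})=\widetilde{\Phi}_{\pi(\mathbbm{g})}(\pi(\mathbbm{e}),\pi(\mathbbm{f})),
\end{equation*}
which is an immediate consequence of the defining formula for $\widetilde{\Phi}_g$ and the fact that $\pi^*$ distributes over products and quotients of values of $\Phi$.

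The grading compatibility $\rho_L(\mathbbm{h}\blacktriangleright v)=\mathbbm{g}\otimes(\mathbbm{h}\blacktriangleright v)$ for $v\in\widetilde{V}_{\mathbbm{g}}$ is the remaining point. Since $G$ is abelian, the original action of any $\pi(\mathbbm{h})$ preserves the $G$-homogeneous components of $V$, i.e.\ $\delta_L(\pi(\mathbbm{h})\triangleright v)=\pi(\mathbbm{g})\otimes(\pi(\mathbbm{h})\triangleright v)$; applying $\iota\otimes\id$ translates this directly into the desired equality in $\widetilde{V}$. I do not anticipate any serious obstacle: the module and comodule structures on $\widetilde{V}$ are pulled back along the group epimorphism $\pi$ and the group-like section $\iota$, and the only mild subtlety is the naturality identity for $\widetilde{(-)}_{\mathbbm{g}}$ under pullback, which is a routine formal manipulation.
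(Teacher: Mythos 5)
Your verification is correct and is exactly the routine check the paper leaves implicit (the paper states the lemma as an ``immediate'' observation and gives no proof): you verify the characterization of Proposition \ref{p2.4} for $(\mathbbm{G},\pi^*(\Phi))$, with the key point being the identity $\widetilde{\pi^*(\Phi)}_{\mathbbm{g}}(\mathbbm{e},\mathbbm{f})=\widetilde{\Phi}_{\pi(\mathbbm{g})}(\pi(\mathbbm{e}),\pi(\mathbbm{f}))$ together with $\pi\circ\iota=\id_G$, which makes the pulled-back action a projective $\mathbbm{G}$-representation on each homogeneous component $\widetilde{V}_{\iota(g)}=V_g$. No gaps.
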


\begin{proposition}\label{p4.11}
For any Nichols algebra $B(V)\in{ _{\k G}^{\k G} \mathcal{YD}^\Phi},$ the Nichols algebra $B(\widetilde{V})\in {_{\k \mathbbm{G}}^{\k \mathbbm{G}} \mathcal{YD}^{\pi^*(\Phi)}}$ is isomorphic to $B(V).$ Moreover, if $\Phi$ is an abelian $3$-cocycle on $G$, then $B(\widetilde{V})$ is twist equivalent to a usual Nichols algebra in $ _{\k \mathbbm{G}}^{\k \mathbbm{G}} \mathcal{YD}$.
\end{proposition}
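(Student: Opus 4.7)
The plan is to handle the two assertions separately, reducing each to previously established results.

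For the first assertion (the isomorphism $B(\widetilde V) \cong B(V)$), I would invoke Lemma \ref{l3.4} with $F = \id_V \colon V \to \widetilde V$ taken as a linear map and with the group epimorphism $f = \pi|_{\mathbbm{G}_{\widetilde V}} \colon \mathbbm{G}_{\widetilde V} \to G_V$. By Lemma \ref{l4.9}, the $\mathbbm{G}$-comodule structure on $\widetilde V$ is $\rho_L = (\iota \otimes \id)\delta_L$, so $(\pi \otimes \id)\rho_L = \delta_L$; this is precisely condition \eqref{eq4.1}. Condition \eqref{eq4.2} is immediate from the definition $\mathbbm{g} \blacktriangleright Z := \pi(\mathbbm{g}) \triangleright Z$. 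The cocycle compatibility $\pi^{*}(\Phi)|_{\mathbbm{G}_{\widetilde V}} = f^{*}(\Phi|_{G_V})$ is tautological since both sides equal $\Phi \circ (\pi \otimes \pi \otimes \pi)$ restricted to $\mathbbm{G}_{\widetilde V}^{\times 3}$. Lemma \ref{l3.4} then delivers the desired isomorphism.

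For the second assertion, assume further that $\Phi$ is abelian. By Proposition \ref{p3.9}, the pullback $\pi^{*}(\Phi)$ is a $3$-coboundary on $\mathbbm{G}$, so there exists a $2$-cochain $J$ on $\mathbbm{G}$ with $\partial J = \pi^{*}(\Phi)$. Twisting as in \eqref{3.4} produces $\widetilde V^{J^{-1}} \in {_{\k \mathbbm{G}}^{\k \mathbbm{G}} \mathcal{YD}^{\pi^{*}(\Phi) \ast \partial(J^{-1})}}$; since $\partial(J^{-1}) = (\partial J)^{-1} = \pi^{*}(\Phi)^{-1}$, the resulting twisted $3$-cocycle is trivial and $\widetilde V^{J^{-1}}$ lies in the untwisted category $_{\k \mathbbm{G}}^{\k \mathbbm{G}} \mathcal{YD}$. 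By Lemma \ref{l3.6}, $B(\widetilde V)^{J^{-1}} \cong B(\widetilde V^{J^{-1}})$, so $B(\widetilde V)$ is twist equivalent to the usual Nichols algebra $B(\widetilde V^{J^{-1}}) \in {_{\k \mathbbm{G}}^{\k \mathbbm{G}}\mathcal{YD}}$, as required.

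The only genuine subtlety I anticipate is in the first part: the lift $\iota \colon G \to \mathbbm{G}$ sending $g_{1}^{i_{1}} \cdots g_{n}^{i_{n}}$ to $\mathbbm{g}_{1}^{i_{1}} \cdots \mathbbm{g}_{n}^{i_{n}}$ (with $0 \le i_{k} < m_{k}$) is only a set-theoretic section and not a group homomorphism, so one must verify that $\rho_{L}$ defines a genuine $\mathbbm{G}$-comodule structure and that the projective-representation identity \eqref{eq2.6} for $\widetilde V$ at $\mathbbm{g}$, with respect to $\widetilde{\pi^{*}\Phi}_{\mathbbm{g}}$, is correctly inherited from the corresponding identity for $V$ at $\pi(\mathbbm{g})$, with respect to $\widetilde{\Phi}_{\pi(\mathbbm{g})}$. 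Coassociativity of $\rho_{L}$ is automatic because every $\mathbbm{g} \in \mathbbm{G}$ is grouplike in $\k \mathbbm{G}$, and the $2$-cocycle compatibility reduces to the evident identity $\widetilde{\pi^{*}\Phi}_{\mathbbm{g}}(\mathbbm{x},\mathbbm{y}) = \widetilde{\Phi}_{\pi(\mathbbm{g})}(\pi(\mathbbm{x}),\pi(\mathbbm{y}))$. These checks are precisely the content of Lemma \ref{l4.9}, so once they are in hand the remainder of the argument is routine bookkeeping.
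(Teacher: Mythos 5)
Your proposal is correct and follows essentially the same route as the paper: the first assertion is obtained by applying Lemma \ref{l3.4} (with the epimorphism $\pi$ between support groups, the conditions \eqref{eq4.1}, \eqref{eq4.2} and the cocycle compatibility being immediate from the definition of $\widetilde V$ in Lemma \ref{l4.9}), and the second by combining Proposition \ref{p3.9} with Lemma \ref{l3.6} via a $2$-cochain $J$ satisfying $\partial J=\pi^*(\Phi)$ and twisting by $J^{-1}$. The only cosmetic point is the orientation of the maps: since $\pi$ goes from $\mathbbm{G}_{\widetilde V}$ onto $G_V$, Lemma \ref{l3.4} should be invoked with $F\colon \widetilde V\to V$ the identity and $f=\pi|_{\mathbbm{G}_{\widetilde V}}$, which is exactly what your verified conditions amount to.
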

\begin{proof}
The first statement is a direct consequence of Lemma \ref{l3.4}. For the second, just note that $\pi^*(\Phi)$ is a 3-coboundary on $G$ by Proposition \ref{p3.9}. So there is a $2$-cochain $J$ on $\mathbbm{G}$ such that $\partial J=\pi^*(\Phi)$. Therefore,  $B(\widetilde{V})^{J^{-1}}\cong B(\widetilde{V}^{J^{-1}})$ is a Nichols algebra in $ _{\k \mathbbm{G}}^{\k \mathbbm{G}} \mathcal{YD}$ according to Lemma \ref{l3.6}.
\end{proof}

Thanks to the preceding proposition, each Nichols algebra of diagonal type $B(V)$ in $_{\k G}^{\k G} \mathcal{YD}^\Phi$ is twist equivalent to an ordinary Nichols algebra of diagonal type, thus has a PBW-type basis as well. So we can define root systems for Nichols algebras of diagonal type in $_{\k G}^{\k G} \mathcal{YD}^\Phi$.

\begin{definition}\index{root system}
Suppose $B(V)$ is a rank $n$ Nichols algebra of diagonal type in $_{\k G}^{\k G} \mathcal{YD}^\Phi$. Let $\Delta^{+}(B(V))$ be the set of $\Z^n$-degrees of the Poincare-Birkhoff-Witt generators counted with multiplicities and let $\Delta(B(V)):= \Delta^{+}(B(V))\bigcup -\Delta^{+}(B(V))$, which is called the root system of $B(V)$.
\end{definition}
It is obvious that if $B(V)$ is twist equivalent to an ordinary Nichols algebra $B(V')$, then $\Delta(B(V))=\Delta(B(V'))$ since the twisting does not change the $\Z^n$-degrees of PBW generators.

\begin{proposition}\label{p3.13}
Let $B(V)$ be a Nichols algebra of diagonal type in $_{\k G}^{\k G} \mathcal{YD}^\Phi$,
$\{X_i|1\leq i\leq n\}$ a canonical basis of $V$, $\delta_L(X_i)=g_i \otimes X_i, 1\leq i\leq n$. Let $(q_{ij})_{n \times n}$ be the structure constants of $V$, i.e. $
g_i\triangleright X_j=q_{ij}X_j, \ 1\leq i,j\leq n$. Let $E=\{e_1,\cdots,e_n\}$ be a basis of $\Z^n$, $\chi$ a bicharacter on $\Z^n$ such that
  $$\chi(e_i,e_j)=q_{ij},\quad 1\leq i,j\leq n.$$
  Then $\Delta(B(V))$ is finite if and only if $\Delta(B(V))_{\chi,E}$ is an arithmetic root system.
\end{proposition}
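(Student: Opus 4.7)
The plan is to reduce to the case of an ordinary Nichols algebra in $_{\k \mathbbm{G}}^{\k \mathbbm{G}} \mathcal{YD}$, where the equivalence between finiteness of $\Delta$ and the arithmetic root system property is exactly Heckenberger's theorem. First, by Corollary \ref{c3.5} I may assume without loss of generality that $G = G_V$. Lemma \ref{l3.8} then guarantees that $\Phi$ is an abelian $3$-cocycle on $G$, so Proposition \ref{p4.11} applies: $B(V)$ is isomorphic (as a $\Z^n$-graded braided Hopf algebra) to $B(\widetilde{V})$, which is in turn twist equivalent to an ordinary Nichols algebra $B(V')$ in $_{\k \mathbbm{G}}^{\k \mathbbm{G}} \mathcal{YD}$ for the doubled group $\mathbbm{G}$. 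Since the underlying vector space and its $\Z^n$-grading are preserved throughout, the PBW generators go to PBW generators of the same $\Z^n$-degrees, and hence $\Delta(B(V)) = \Delta(B(V'))$.

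Second, I must track the braiding invariants through this chain. The $\mathbbm{G}$-action on $\widetilde{V}$ is obtained from the $G$-action via $\pi$, so in the canonical basis $\{X_i\}$ the structure constants satisfy $\widetilde{q}_{ij} = q_{ij}$. Under the twist $V' = \widetilde{V}^{J^{-1}}$ the action rescales by the ratio of $J^{-1}$ values as in \eqref{3.4}, so the new structure constants $q'_{ij}$ obey $q'_{ii} = q_{ii}$ and $q'_{ij} q'_{ji} = q_{ij} q_{ji}$. Consequently the generalized Dynkin diagram associated to the structure constants of $V'$ coincides with the one built from $\chi$ and $E$, and the two Weyl groupoids $W_{\chi', E}$ and $W_{\chi, E}$ are canonically identified (both in terms of being full and in terms of finiteness).

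Third, for the ordinary diagonal Nichols algebra $B(V')$ in $_{\k\mathbbm{G}}^{\k\mathbbm{G}}\mathcal{YD}$, Heckenberger's theorem (cf. \cite{H0, H4}) states that $\Delta(B(V'))$ is finite if and only if the Weyl groupoid $W_{\chi', E}$ is full and finite, i.e.\ if and only if $(\Delta(B(V')), \chi', E)$ is an arithmetic root system. Combining with the two identifications from the previous steps yields that $\Delta(B(V))$ is finite if and only if $\Delta(B(V))_{\chi, E}$ is an arithmetic root system, which is the claim.

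The main obstacle I anticipate is the bookkeeping in the second step: one must verify carefully that the base-group change $V \to \widetilde{V}$ and the $2$-cochain twist $\widetilde{V} \to \widetilde{V}^{J^{-1}}$, though they alter the group acting and individual structure constants, respectively preserve the $\Z^n$-grading on the Nichols algebra and the bicharacter invariants $q_{ii}$, $q_{ij}q_{ji}$ that determine the generalized Dynkin diagram. Everything beyond that is a direct appeal to previously recorded results.
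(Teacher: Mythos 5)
Your proposal is correct and follows essentially the same route as the paper: restrict to the support group so that $\Phi$ is abelian (Lemma \ref{l3.8}, Corollary \ref{c3.5}), pass via Proposition \ref{p4.11} and a $2$-cochain twist to an ordinary diagonal Nichols algebra, observe that $q_{ii}$ and $q_{ij}q_{ji}$ (hence the generalized Dynkin diagram and Weyl groupoid data) are preserved by \eqref{3.4}, and invoke Heckenberger's results for ordinary Nichols algebras of diagonal type. The only cosmetic difference is that the paper treats the ``if'' direction as immediate and runs the reduction only for the ``only if'' direction, while you phrase both directions through the equivalence for $B(V')$.
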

\begin{proof}

If $\Delta(B(V))_{\chi,E}$ is an arithmetic root system, then clearly $\Delta(B(V))$ is finite.

Now suppose that $\Delta(B(V))$ is finite. Since $B(V)$ is of diagonal type, it is harmless to assume that $\Phi$ is an abelian $3$-cocycle on $G$ by Lemma \ref{l3.8}. According to Proposition \ref{p4.11}, there is a $2$-cochain
$J$ on $\mathbbm{G}$ such that $B(\widetilde{V})^J=B(\widetilde{V}^J)$ is an ordinary Nichols algebra. Let $V'=\widetilde{V}^J$, $\{q'_{ij}\}$ the structure constants of $V'$, $\chi'$ a bicharacter of $\Z^n$ such that $$\chi'(e_i,e_j)=q'_{ij},\ 1\leq i,j\leq n.$$
Hence $\Delta(B(V'))_{\chi',E}$ is an arithmetic root system since $B(V')$ is an ordinary finite-dimensional Nichols algebra of diagonal type. On the other hand, we have $q'_{ii}=q_{ii},\ q'_{ij}q'_{ji}=q_{ij}q_{ji}, \ 1\leq i,j\leq n$ by \eqref{3.4}. So $\Delta(B(V'))_{\chi,E}=\Delta(B(V))_{\chi,E}$ is an arithmetic root system twist equivalent to $\Delta(B(V'))_{\chi',E}$.
This implies that $\Delta(B(V))_{\chi,E}$ is an arithmetic root system.
\end{proof}

Let $R=\oplus_{i \geq 0}R[i]$ be a coradically graded Hopf algebra in $_{\k G}^{\k G} \mathcal{YD}^\Phi$. We call $R$ connected if $R[0]=\k 1$. The following proposition is very important for our further investigation.

\begin{proposition}\label{p3.14}
Suppose that $R=\oplus_{i\geq 0} R[i]$ is a finite-dimensional connected coradically graded Hopf algebra in $_{\k G}^{\k G}\mathcal{YD}^\Phi$ such that $\Phi|_{G_{R[1]}}$ is an abelian $3$-cocycle on $G_{R[1]}$. Then $R=B(R[1])$ is a Nichols algebra.
\end{proposition}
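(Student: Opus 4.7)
My plan is to first embed $B(R[1])$ into $R$ as a braided Hopf subalgebra, then to descend $R$ to a Yetter--Drinfeld category over the support group of $R[1]$, apply the lift-and-twist reduction of Subsection~\ref{sub3.2} to pass to the untwisted setting, invoke the generation theorem for finite-dimensional pointed Hopf algebras over finite abelian groups, and close with a dimension count.

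Set $V:=R[1]$ and $H:=G_V$. Since $R$ is coradically graded and connected, standard arguments give $P(R)=V$, so the subalgebra $\langle V\rangle\subseteq R$ is a graded braided Hopf subalgebra whose own primitives lie in $P(R)$ and therefore equal $V$; by the universal property of the Nichols algebra this yields an embedding $B(V)\hookrightarrow R$. The key intermediate step is to show $G_R\subseteq H$, which I would prove by induction on the $\N$-grading $n$. The cases $n\le 1$ are clear. For $n\ge 2$ and homogeneous $x\in R[n]$ of $G$-degree $g$, the element $\Delta(x)-x\otimes 1-1\otimes x$ lies in $\bigoplus_{i+j=n,\,i,j\ge 1}R[i]\otimes R[j]$, whose summands, by the inductive hypothesis and the compatibility of $\Delta$ with the $G$-grading, are bigraded in $H\times H$; so either this piece is nonzero and $g\in H$, or $x\in P(R)\cap R[n]=V\cap R[n]=0$. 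Hence $R\in {_{\k H}^{\k H}\mathcal{YD}^{\Phi|_H}}$.

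Because $\Phi|_H$ is abelian, applying Lemma~\ref{l4.9} together with the construction analogous to \eqref{e4.4} with $H$ in place of $G$ lifts $R$ to $\widetilde R\in {^{\k\mathbbm{H}}_{\k\mathbbm{H}}\mathcal{YD}^{\pi^*(\Phi|_H)}}$; Proposition~\ref{p3.9} then furnishes a $2$-cochain $J$ on $\mathbbm{H}$ with $\partial J=\pi^*(\Phi|_H)$, and twisting by $J^{-1}$ produces an ordinary finite-dimensional connected coradically graded braided Hopf algebra $\widetilde R^{J^{-1}}\in {^{\k\mathbbm{H}}_{\k\mathbbm{H}}\mathcal{YD}}$, both operations being dimension-preserving and grading-compatible. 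Its Radford bosonization $\widetilde R^{J^{-1}}\#\k\mathbbm{H}$ is a finite-dimensional pointed Hopf algebra over the finite abelian group $\mathbbm{H}$, so the generation theorem for such Hopf algebras (due to Andruskiewitsch--Schneider and Angiono) forces $\widetilde R^{J^{-1}}=B(\widetilde R^{J^{-1}}[1])$. Combining with Lemma~\ref{l3.6} and Proposition~\ref{p4.11} yields $\dim R=\dim\widetilde R^{J^{-1}}=\dim B(V)$, and juxtaposed with the embedding $B(V)\hookrightarrow R$ of the previous paragraph this forces $R=B(R[1])$. The main obstacle I expect is the descent claim $G_R\subseteq G_{R[1]}$, without which the lift-and-twist machinery only controls $B(V)$ rather than all of $R$; the identification $P(R)=R[1]$ coming from the coradically graded, connected hypothesis is what makes this descent clean via the above induction on degree.
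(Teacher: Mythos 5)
Your first two steps are fine and in fact supply details the paper only asserts: the identification $P(R)=R[1]$ and the induction on the $\N$-degree correctly prove $G_R=G_{R[1]}$, which is exactly the paper's opening claim, and the restriction of $R$ to ${_{\k H}^{\k H}\mathcal{YD}^{\Phi|_H}}$ with $H=G_{R[1]}$ is also how the paper proceeds. The divergence is in what comes next: the paper finishes by quoting \cite[Proposition 5.1]{HLYY2} (the generation result for connected coradically graded Hopf algebras in a category with \emph{abelian} $3$-cocycle), whereas you try to re-derive that result by lifting $R$ itself to the bigger group $\mathbbm{H}$ and twisting.

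That lifting step is a genuine gap. Lemma~\ref{l4.9} produces from an \emph{object} $V$ an object $\widetilde V$ whose $\k\mathbbm{H}$-coaction is $\rho_L=(\iota\otimes\id)\delta_L$ for a set-theoretic section $\iota$ of $\pi$, and $\iota$ is not a group homomorphism. Consequently the multiplication and comultiplication of $R$ are in general \emph{not} morphisms in ${_{\k\mathbbm{H}}^{\k\mathbbm{H}}\mathcal{YD}^{\pi^*(\Phi|_H)}}$ for this coaction: if $x,y\in R$ have $H$-degrees $a,b$, then $x\otimes y$ has lifted degree $\iota(a)\iota(b)$ while $xy$ gets degree $\iota(ab)$, and these differ by a nontrivial element of $\ker\pi$ as soon as, say, $a=b$ has order $m$ in $H$ and $\iota(a)^2\neq\iota(a^2)$ in $\Z_{m^2}$ (already $H=\Z_2$, $x$ of degree $g$ with $x\cdot y\neq 0$ gives $\mathbbm{g}^2\neq 1=\iota(g^2)$); the same failure occurs for $\Delta$ applied to products. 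So ``$\widetilde R$ is a braided Hopf algebra in the lifted category, and $\widetilde R^{J^{-1}}$ is an ordinary connected coradically graded braided Hopf algebra'' is unjustified, and hence the appeal to the Andruskiewitsch--Schneider/Angiono generation theorem and the ensuing dimension count have nothing to apply to. Note that Proposition~\ref{p4.11} sidesteps exactly this issue: it compares $B(V)$ with $B(\widetilde V)$, the Nichols algebra built \emph{inside} the lifted category from the lifted object, whose $\mathbbm{H}$-grading is generated multiplicatively by the degrees $\iota(g_i)$ of the degree-one generators; this device is available only for algebras generated in degree one, which is precisely what you are trying to prove for $R$ --- so as it stands the reduction is circular. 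To repair the argument one either needs the nontrivial transport of $R$ (not just of $B(R[1])$) across the base-group change and twist, which is the content of \cite[Proposition 5.1]{HLYY2} cited in the paper, or an independent generation argument (e.g.\ the dual/relations argument) carried out in the twisted category itself.
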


\begin{proof}
Since $R$ is coradically graded, we have $G_{R[1]}=G_R$. Let $H=G_{R}$ and $\Psi=\Phi|_H$. It is obvious that $R$ is also a connected coradically graded Hopf algebra in $_{\k H}^{\k H}\mathcal{YD}^\Psi$. Since $\Psi$ is an abelian $3$-cocycle on $H$, we have $R=B(R[1])$ by \cite[Proposition 5.1]{HLYY2}.
\end{proof}

\subsection{The Nichols algebras of simple twisted Yetter-Drinfeld modules}
In this subsection we focus on the Nichols algebras of nondiagonal Yetter-Drinfeld modules. Note that if $\Phi$ is an abelian $3$-cocycle on $G$, then each object of $_{\k G}^{\k G}\mathcal{YD}^\Phi$ is of diagonal type. So nondiagonal Yetter-Drinfeld modules appear in $_{\k G}^{\k G}\mathcal{YD}^\Phi$ only if $\Phi$ is nonabelian. The following proposition is an immediate consequence of Propositions \ref{p2.26} and \ref{p2.6}.
\begin{proposition}\label{p4.18}
Suppose that $G$ is a cyclic group $\Z_m$ or a direct product of two cyclic groups, say $\Z_{m_1}\times \Z_{m_2}$, then all the $3$-cocycles on $G$ are abelian.
\end{proposition}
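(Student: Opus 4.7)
The plan is to reduce the statement to the two classification results already recorded in the excerpt: Proposition~\ref{p2.26}, which exhibits the representatives $\omega_{\underline{\mathbf{c}}}$ of $\operatorname{H}^3(G,\k^*)$ for a finite abelian group $G=\mathbb{Z}_{m_1}\times\cdots\times\mathbb{Z}_{m_n}$, and Proposition~\ref{p2.6}, which identifies those representatives whose cohomology class is abelian. Comparing the formula \eqref{2.28} with \eqref{3.8}, a representative $\omega_{\underline{\mathbf{c}}}$ is abelian precisely when all the ``triple'' parameters $c_{rst}$, indexed by $1\le r<s<t\le n$, vanish. Hence the only thing to check is that for the two groups in question these parameters are simply not present.

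For $G=\mathbb{Z}_m$ we have $n=1$, and for $G=\mathbb{Z}_{m_1}\times\mathbb{Z}_{m_2}$ we have $n=2$. In either case the index set $\{(r,s,t):1\le r<s<t\le n\}$ is empty, so the third factor in the expression of $\omega_{\underline{\mathbf{c}}}$ in \eqref{2.28} is the empty product. Thus every representative $3$-cocycle on $G$ already has the shape prescribed by Proposition~\ref{p2.6}, and so its cohomology class is abelian.

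It remains only to observe that the property of being abelian is a cohomology invariant. This is built into the setup: $\Phi$ is called abelian when $^{\k G}_{\k G}\mathcal{YD}^\Phi$ is pointed, and cohomologous $3$-cocycles give tensor equivalent twisted Yetter-Drinfeld module categories (through the twist by the bounding $2$-cochain described in Lemma~\ref{l3.6}), so one cohomology class is pointed iff any other is. Therefore every $3$-cocycle on $G$, being cohomologous to some $\omega_{\underline{\mathbf{c}}}$, is abelian.

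There is essentially no obstacle: the proposition is a pure bookkeeping corollary of the explicit parametrization of $\operatorname{H}^3(G,\k^*)$ from \cite{HLYY2}. The only point where care is needed is to make sure ``abelian'' is invoked as a cohomology-invariant property before passing to the chosen representatives, so that the argument really covers \emph{all} $3$-cocycles rather than only the normalized representatives $\omega_{\underline{\mathbf{c}}}$.
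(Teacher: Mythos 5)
Your argument is correct and takes essentially the same route as the paper, which simply records the proposition as an immediate consequence of Propositions \ref{p2.26} and \ref{p2.6}: for $n\le 2$ the representatives $\omega_{\underline{\mathbf{c}}}$ carry no parameters $c_{rst}$, so every class is already of the abelian form \eqref{3.8}. Your two explicit extra remarks --- that being abelian is a cohomology-invariant property, and (implicitly) that cocycles of the form \eqref{3.8} are indeed abelian, i.e.\ the converse direction of Proposition \ref{p2.6} imported from \cite{HLYY2} --- only spell out what the paper leaves tacit.
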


\begin{proposition}\label{p4.19}
Let $G$ be a finite abelian group, $\Phi$ a $3$-cocycle on $G$. Suppose that $B(V)$ is a Nichols algebra in $_{\k G}^{\k G}\mathcal{YD}^\Phi$, where $V$ is a simple Yetter-Drinfeld module, or a direct sum of two simple Yetter-Drinfeld modules. Then $B(V)$ is isomorphic to a Nichols algebra of diagonal type $B(V')$ in $_{\k H}^{\k H}\mathcal{YD}^\Psi$, where $H=G_V$ and $\Psi=\Phi|_{H}$.
\end{proposition}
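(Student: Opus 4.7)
The plan is to combine the base-group reduction (Corollary \ref{c3.5}) with the cocycle classification (Proposition \ref{p4.18}), so that the hypothesis on the number of simple summands is used only to control the structure of the support group $G_V$.

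First I would apply Corollary \ref{c3.5}: take $U \in {_{\k H}^{\k H}\mathcal{YD}^{\Psi}}$ to be $V$ regarded in the smaller category via the inherited (co)module structure, with $H = G_V$ and $\Psi = \Phi|_{H}$. This already gives $B(V) \cong B(U)$ as braided Hopf algebras, so the proposition reduces to showing that $U$ is of diagonal type in ${_{\k H}^{\k H}\mathcal{YD}^{\Psi}}$.

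Next I would analyze the shape of $H = G_V$. If $V$ is simple, then by Remark \ref{rm2.4} there is a single $g \in G$ with $V = V_g$, so $H = \langle g \rangle$ is cyclic. If $V = V^{(1)} \oplus V^{(2)}$ with the $V^{(i)}$ simple and supported at $g_i$, then $H = \langle g_1, g_2 \rangle$ is a finite abelian group generated by two elements, hence by the fundamental theorem a quotient of $\mathbb{Z}^2$ and therefore isomorphic either to a cyclic group $\Z_m$ or to a direct product $\Z_{m_1} \times \Z_{m_2}$. In either subcase, Proposition \ref{p4.18} forces every $3$-cocycle on $H$, and in particular $\Psi$, to be abelian.

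Having $\Psi$ abelian on $H$, the characterization recorded just after Proposition \ref{p2.6} tells me that every object of ${_{\k H}^{\k H}\mathcal{YD}^{\Psi}}$ is of diagonal type. Setting $V' := U$, this means $B(V')$ is a Nichols algebra of diagonal type in ${_{\k H}^{\k H}\mathcal{YD}^{\Psi}}$ with $B(V) \cong B(V')$, which is exactly the desired conclusion.

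No step here is technically difficult; the proof is essentially a bookkeeping assembly of previously established facts. The only point worth a careful sentence is the structural claim that a $2$-generated finite abelian group is a product of at most two cyclic factors, which is precisely what allows Proposition \ref{p4.18} to apply. Everything else — the reduction to $G_V$, the passage from abelian cocycle to diagonal type — is cited verbatim.
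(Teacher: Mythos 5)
Your proposal is correct and follows essentially the same route as the paper: reduce to the support group $G_V$ via Corollary \ref{c3.5}, observe that $G_V$ is cyclic or a product of two cyclic groups (the paper cites Proposition \ref{p2.4} for this, you argue it directly from Remark \ref{rm2.4} and the structure of $2$-generated finite abelian groups), invoke Proposition \ref{p4.18} to get that $\Psi$ is abelian, and conclude diagonality. The only cosmetic difference is the order of the steps and the slightly more explicit justification of the shape of $G_V$, which is harmless.
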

\begin{proof}
By Proposition \ref{p2.4}, $G_V$ is either a cyclic group, or of the form $\Z_m\times \Z_n$. Hence $\Psi$ is an abelian $3$-cycycle of $H$ by Proposition \ref{p4.18}. According to Corollary \ref{c3.5}, there is a Nichols algebra $B(V')$ in $_{\k H}^{\k H}\mathcal{YD}^\Psi$ such that $B(V)\cong B(V')$. Thus $B(V')$ is of diagonal type since $\Psi$ is an abelian $3$-cocycle on $H$.
\end{proof}

According to this proposition, we can apply the theory of Nichols algebras of diagonal type to study the Nichols algebras of simple twisted Yetter-Dinfeld modules, or of a direct sum of two simple twisted Yetter-Drinfeld modules.

\begin{definition}
Let $G$ be a finite group and $\alpha$ a $2$-cocycle on $G$. An element $g\in G$ is called an $\alpha$-element if $\alpha(g,h)=\alpha(h,g)$ for all $h\in G$.
\end{definition}

\begin{proposition}\label{p4.23}
Let $G$ be a finite abelian group, $\Phi$ a $3$-cocycle on $G$. Suppose $V$ is a simple Yetter-Drinfeld module of nondiagonal type in $_{\k G}^{\k G}\mathcal{YD}^\Phi$, $g_V=g$. Then $B(V)$ is finite-dimensional if and only if $V$ is one of the following two cases:
\begin{itemize}
\item[(C1)] $g\triangleright v=-v$ for all $v\in V$;
\item[(C2)] $\dim(V)=2$ and $g\triangleright v=\zeta_3 v$ for all $v\in V$, here $\zeta_3$ is a $3$-rd primitive root of unity.
\end{itemize}
\end{proposition}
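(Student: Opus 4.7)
The plan is to reduce to a diagonal Nichols algebra, show that $g$ must act on $V$ by a single scalar, and then extract the allowed scalars from the arithmetic root system classification. Since $V$ is simple, the support group $G_V=\langle g\rangle$ is cyclic, so by Proposition~\ref{p4.19} there is a Nichols algebra of diagonal type $B(V')$ in $_{\k H}^{\k H}\mathcal{YD}^{\Psi}$ (with $H=\langle g\rangle$ and $\Psi=\Phi|_H$) such that $B(V)\cong B(V')$. I would then decompose $V'=\bigoplus_{i=1}^{n}\k X_i$ into $1$-dimensional Yetter-Drinfeld modules---all of $H$-degree $g$---and record the scalars $q_i\in\k^\ast$ determined by $g\triangleright X_i=q_i X_i$.

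The crux is to prove that all the $q_i$ coincide. A direct computation with the definition of $\widetilde{\Phi}_g$ yields
\[
\widetilde{\Phi}_g(g,h)\;=\;\Phi(g,h,g)\;=\;\widetilde{\Phi}_g(h,g)\qquad\text{for every } h\in G,
\]
so $g$ lies in the radical of $\widetilde{\Phi}_g$; equivalently, $g$ is central in the twisted group algebra $\k^{\widetilde{\Phi}_g}[G]$. Since $V$ is a simple module over this algebra, Schur's lemma forces $g$ to act by a single scalar $q\in\k^\ast$, and hence $q_i=q$ for every $i$. The structure constants of $V'$ then form the constant matrix $q_{ij}=q$, so the generalized Dynkin diagram has $n$ vertices labeled $q$, joined pairwise (whenever $q^2\neq 1$) by edges labeled $\widetilde{q}_{ij}=q^2$.

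Applying Proposition~\ref{p3.13}, $B(V)$ is finite-dimensional iff this diagram comes from an arithmetic root system. The value $q=1$ is impossible since then $B(\k X_i)$ is already infinite-dimensional. If $q=-1$ no edges appear, every $X_i$ is a square-zero generator, and $B(V')\cong\Lambda(V')$ has dimension $2^n$, producing case~(C1). For any other $q$ the diagram is the complete graph $K_n$ with edge label $q^2$; at rank $n=2$ the only row in Heckenberger's rank-$2$ tables with matching vertex and edge labels is Cartan type $A_2$, which forces $q^2=q^{-1}$, that is $q=\zeta_3$, producing case~(C2).

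The principal obstacle will be ruling out rank $n\geq 3$ when $q\neq\pm 1$. My strategy is: every pair of vertices forms a rank-$2$ arithmetic subsystem with equal vertex labels, so the rank-$2$ step already pins down $q=\zeta_3$; but then the full diagram on $n\geq 3$ vertices has vertex labels $\zeta_3$ and all edges $\zeta_3^{-1}$, so the generalized Cartan matrix is $a_{ii}=2$, $a_{ij}=-1$ for $i\neq j$, i.e.\ the extended affine type $\widetilde{A}_{n-1}$ with infinite Weyl groupoid---contradicting arithmeticity. Verifying this reduction to the rank-$2$ classification, and confirming by inspection of Heckenberger's tables that no exotic rank-$\geq 3$ entry escapes the argument, is the technical point that requires care.
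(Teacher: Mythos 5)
Your proposal is correct and takes essentially the same route as the paper: reduce to the cyclic support group $\langle g\rangle$, use the symmetry $\widetilde{\Phi}_g(g,h)=\widetilde{\Phi}_g(h,g)$ plus Schur's lemma to get a single scalar $q$, hence constant structure constants, and then invoke Proposition \ref{p3.13} and Heckenberger's classification to force $q=-1$ (any rank, exterior algebra) or $q=\zeta_3$ with rank $2$; your rank-$2$-subdiagram/Cartan-matrix exclusion of rank $\geq 3$ is just a more explicit version of the paper's ``careful check of \cite{H4}.'' Two minor points to tidy: Proposition \ref{p3.13} equates arithmeticity with finiteness of the \emph{root system}, not of $\dim B(V)$ (your uses are fine because $-1$ and $\zeta_3$ are roots of unity, so all heights are finite), and for $n\geq 4$ the complete-graph Cartan matrix with $a_{ij}=-1$ is not of type $\widetilde{A}_{n-1}$, though it contains an affine rank-$3$ submatrix, so your non-finiteness conclusion still holds.
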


\begin{proof}
First of all, we study the simple Yetter-Drinfeld module $V$ by considering its support group. According to Proposition \ref{p2.4}, $V$ is a simple $(G,\widetilde{\Phi}_g)$-representation. We claim that $g\triangleright v=\lambda v, \ \forall v\in V$ for some nonzero constant $\lambda.$ Assume the order of $g$ is $n.$ Then
     $$\underbrace{g\triangleright (g\triangleright(\cdots (g}_n\triangleright v)\cdots ))=\prod_{i=1}^{n-1}\widetilde{\Phi}_g(g,g^i)v, \quad \forall v\in V.$$
So the action of $g$ on $V$ is diagonal. On the other hand, for each $h\in G$, we have
$$\widetilde{\Phi}_g(g,h)=\frac{\Phi(g,g,h)\Phi(g,h,g)}{\Phi(g,g,h)}=\frac{\Phi(h,g,g)\Phi(g,h,g)}{\Phi(h,g,g)}=\widetilde{\Phi}_g(h,g).$$
Hence $g$ is a $\widetilde{\Phi}_g$-element, and we have $g\triangleright(h\triangleright v)=\widetilde{\Phi}_g(g,h)gh\triangleright v=h\triangleright(g\triangleright v)$ for any $h\in G$ and $v\in V$. So the $g$-action on $V$ is a morphism of $(G,\widetilde{\Phi}_g)$-representations, and the Schur's Lemma guarantees $g\triangleright v=\lambda v, \forall v\in V$ for some nonzero constant $\lambda$.

Let $H=\langle g\rangle$ and $\Psi=\Phi|_H.$ Then by Corollary \ref{c3.5}, $B(V)$ is isomorphic to a Nichols algebra $B(V')$ in $_{\k H}^{\k H}\mathcal{YD}^\Psi$. Thus in the following it is enough to consider $V'$ and $B(V')$ instead. Let $\dim(V)=n.$ The structure constants $(q_{ij})$ of $V'$ are given by
$$q_{ij}=\lambda, \quad 1\leq i,j\leq n,$$
since $g\triangleright v=\lambda v, \ \forall v\in V'$. Let $E=\{e_1,\cdots,e_n\}$ be a basis of $\Z^n$, $\chi$ a bicharacter on $\Z^n$ such that  $$\chi(e_i,e_j)=q_{ij}=\lambda,\ 1\leq i,j\leq n.$$

If the simple Yetter-Drinfeld module $V,$ and so $V',$ satisfies either (C1) or (C2), then clearly $B(V'),$ and so $B(V),$ is finite-dimensional, see e.g. \cite{H4}. Now we prove the converse. Suppose $B(V')$ is finite-dimensional. Then $\lambda\neq 1$ and  $\Delta(B(V'))_{\chi,E}$ is an arithmetic root system by Proposition \ref{p3.13}. If $\lambda= -1$, then we have $q_{ii}=-1$ and $q_{ij}q_{ji}=1$ for all $1\leq i\neq j\leq n$. In this case, $V'$ satisfies the condition $\mathrm{C1}$ and so does $V$. Now assume $\lambda \neq -1.$ Then the arithmetic root system associated to $B(V')$ is connected since $q_{ij}q_{ji}=\lambda^2\neq 1$ for all $1\leq i\neq j\leq n$. By a careful check up on the complete classification of arithmetic root systems in \cite{H4}, one can easily conclude that the generalized Dynkin diagram of $\Delta(B(V'))_{\chi,E}$ must be as follows:
\[ {\setlength{\unitlength}{1mm} \Dchaintwo{}{$\zeta_3$}{$\zeta^{-1}_3$}{$\zeta_3$}} \quad .\] This forces $\lambda=\zeta_3$ and $n=2.$ Thus $V',$ and so $V,$ satisfies the condition $\mathrm{C2}$.
\end{proof}

In the following, we give two examples of simple Yetter-Drinfeld modules of nondiagonal type satisfying conditions $\mathrm{C1}$ and $\mathrm{C2}$ respectively. The associated nondiagonal Nichols algebras are finite-dimensional.

Recall that, if $\varphi$ is a $2$-cocycle on an abelian group $G$, then a map $\rho \colon G\to \GL(V)$ is a $(G,\varphi)$-representation if and only if
\begin{equation}\label{4.10}
\rho(1)=\id_V,\ \rho(g)\rho(h)=\frac{\varphi(g,h)}{\varphi(h,g)}\rho(h)\rho(g),\ \forall g,h\in G.
\end{equation}

\begin{example}\label{e3.19}
Let $G=\Z_2\times \Z_2\times \Z_2=\langle e_1\rangle\times \langle e_2\rangle \times \langle e_3\rangle$, $\Phi$ a $3$-cocycle on $G$ given by
$$\Phi(e_1^{i_1}e_2^{i_2}e_3^{j_3},e_1^{j_1}e_2^{j_2}e_3^{i_3},e_1^{k_1}e_2^{k_2}e_3^{k_3})=(-1)^{i_3j_2k_1}.$$
For a $2$-dimensional $\k$-vector space $V$ with a fixed basis $\{X_1,X_2\}$,
define $\rho \colon G \to \GL(V)$ by
\begin{eqnarray*}
&&\rho(1)=\left(
            \begin{array}{cc}
              1 & 0 \\
              0 & 1 \\
            \end{array}
          \right), \quad
\rho(e_1)=\left(
            \begin{array}{cc}
              -1 & 0 \\
              0 & -1 \\
            \end{array}
          \right), \quad
\rho(e_2)=\left(
         \begin{array}{cc}
              1 & 0 \\
              0 & -1 \\
            \end{array}
          \right),\\
&&\rho(e_3)=\left(
            \begin{array}{cc}
              0 & 1 \\
              1 & 0 \\
            \end{array}
          \right), \quad
 \rho(e_1e_2)=\left(
            \begin{array}{cc}
              -1 & 0 \\
              0 & 1 \\
            \end{array}
          \right), \quad
\rho(e_1e_3)=\left(
         \begin{array}{cc}
              0 & -1 \\
              -1 & 0 \\
            \end{array}
          \right),\\
&&\rho(e_2e_3)=\left(
            \begin{array}{cc}
              0 & -1 \\
              1 & 0 \\
            \end{array}
          \right), \quad
\rho(e_1e_2e_3)=\left(
            \begin{array}{cc}
              0 & 1 \\
              -1 & 0 \\
            \end{array}
          \right).
\end{eqnarray*}
Then one can verify that $\rho$ satisfies \eqref{4.10}, hence $(V,\rho)$ is a $(G,\widetilde{\Phi}_{e_1})$-representation. According to Proposition \ref{p2.4}, $V$ is a simple Yetter-Drinfeld module in $_{\k G}^{\k G}\mathcal{YD}^{\Phi}$ such that $g_V=e_1$ and $e_1\triangleright v=-v$ for all $v\in V$.

\end{example}
\begin{example}\label{e3.20}
Let $G=\Z_6\times \Z_6\times \Z_6=\langle e_1\rangle \times \langle e_2\rangle \times \langle e_3\rangle$, $\Phi$ a $3$-cocycle on $G$ given by
$$\Phi(e_1^{i_1}e_2^{i_2}e_3^{j_3},e_1^{j_1}e_2^{j_2}e_3^{i_3},e_1^{k_1}e_2^{k_2}e_3^{k_3})=(-1)^{i_3j_2k_1}.$$
Let $V$ be a $2$-dimensional $\k$-vector space with a fixed basis. Define a map $\rho \colon G\to \GL(V)$, with respect to the fixed basis of $V$, by
\begin{equation*}
\rho(e_1^{i_1}e_2^{i_2}e_3^{i_3})=(-1)^{i_2i_3}\left(
                                                 \begin{array}{cc}
                                                   \zeta_3^{i_1+i_2} & 0 \\
                                                   0 & (-1)^{i_2}\zeta_3^{i_1+i_2} \\
                                                 \end{array}
                                               \right)
                                               \left(
                                                 \begin{array}{cc}
                                                   0 & 1 \\
                                                   1 & 0 \\
                                                 \end{array}
                                               \right)^{i_3}.
\end{equation*}
Then one can verify that $\rho$ satisfies \eqref{4.10}, hence $(V,\rho)$ is a $(G,\widetilde{\Phi}_{e_1})$-representation. Since $\widetilde{\Phi}_{e_1}(e_2,e_3)=-1\neq \widetilde{\Phi}_{e_1}(e_3,e_2)=1$,  $\widetilde{\Phi}_{e_1}$ is not  symmetric.
So all the simple $(G,\widetilde{\Phi}_{g_1})$-representations have dimension $\geq 2$ by Remark \ref{rm2.4}. This implies that $(V,\rho)$ is a simple $(G,\widetilde{\Phi}_{e_1})$-representation.
By Proposition \ref{p2.4}, $V$ is a simple Yetter-Drinfeld module in $_{\k G}^{\k G}\mathcal{YD}^{\Phi}$ such that $g_V=e_1$ and $g_1\triangleright v=\zeta_3 v$ for all $v\in V$.
\end{example}

\section{Finite quasi-quantum groups over abelian groups of odd order}

In this section we provide a complete classification of finite-dimensional coradically graded pointed coquasi-Hopf algebras over abelian groups of odd order. This is also applied to the classification theory of pointed finite tensor categories.  In particular, we give a partial answer to the following
\begin{conjecture}\cite[Conjecture 5.11.10.]{EGNO}
A pointed finite tensor category is tensor generated by objects of length $2$.
\end{conjecture}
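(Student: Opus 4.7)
The plan is to reduce the conjecture to a statement about generation of the associated graded braided Hopf algebra in a twisted Yetter--Drinfeld module category, and then to appeal to the structural results of Section~3.

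First, by \cite{EO} any pointed finite tensor category $\mathcal{C}$ is tensor equivalent to $\comod \M$ for some finite-dimensional pointed coquasi-Hopf algebra $\M$. The coradical filtration $\{\M_n\}_{n \geq 0}$ is a coalgebra filtration which is compatible with multiplication up to the associator, so $\M$ is generated as an algebra by $\M_1$ if and only if $\mathcal{C}$ is tensor generated by its objects of length at most~$2$ (the simple comodules are themselves of length~$1$, and the length-$2$ objects correspond to nontrivial extensions of simples, which all live inside~$\M_1$). This generation property is preserved by passage to~$\gr \M$. By Lemma~\ref{l2.7}, $\gr \M \cong R \# \k G$, where $G$ is the group of isomorphism classes of simple objects of $\mathcal{C}$ and $R = \bigoplus_{i\ge 0} R[i]$ is a connected coradically graded Hopf algebra in ${_{\k G}^{\k G}\mathcal{YD}^{\Phi}}$ for some $3$-cocycle $\Phi$ on $G$. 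The conjecture is thus reduced to proving that $R$ is generated as an algebra by $R[1]$.

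Second, if one can identify $R$ with the Nichols algebra $B(R[1])$ of its degree-$1$ part, then $R$ is automatically generated in degree~$1$ by Definition~\ref{d2.9}. When $G$ is abelian and the restriction $\Phi|_{G_{R[1]}}$ to the support group is an abelian $3$-cocycle, Proposition~\ref{p3.14} delivers precisely $R = B(R[1])$. In particular, if $R[1]$ has at most two simple summands, then by Proposition~\ref{p4.18} its support group $G_{R[1]}$ is cyclic or a product of two cyclic groups, so $\Phi|_{G_{R[1]}}$ is automatically abelian and Propositions~\ref{p4.19} and~\ref{p3.14} together complete the argument.

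Third, the real obstacle is to remove the hypotheses on $G$ and on the number of simple summands of $R[1]$. Two independent issues arise. (a) For non-abelian~$G$, the description of ${_{\k G}^{\k G}\mathcal{YD}^{\Phi}}$ in Proposition~\ref{p2.4} must be replaced by a conjugacy-class version involving projective representations of centralizers twisted by the appropriate analogue of $\widetilde{\Phi}_{g}$, and Proposition~\ref{p3.14} has to be re-established in that setting; one would base this on a quasi-Hopf generalization of the lifting method. (b) For abelian~$G$ but nonabelian~$\Phi$ and $R[1]$ having three or more simple summands, one must exclude new skew-primitive elements in higher degrees of $R$. The rigidity shown by Proposition~\ref{p4.23} in the single-summand case suggests that finite-dimensionality of $B(V)$ is extremely restrictive in the nondiagonal regime, and the program is to exploit this rigidity together with the $\Z^{l}$-grading of Proposition~\ref{p4.1} to force every homogeneous component of $R$ to be a product of length-$1$ generators. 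Step~(b) is the hard one: it constitutes a quasi-Hopf analogue of the Andruskiewitsch--Schneider generation conjecture for Nichols algebras, which is itself open in full generality, and this is the fundamental reason a complete proof of the conjecture currently lies beyond reach. The realistic first-pass attack, carried out in Section~4, is the odd-order abelian case, where Proposition~\ref{p4.23} rules out any finite-dimensional $B(V)$ with $V$ simple nondiagonal, so the nondiagonal obstruction disappears and one falls back on the established diagonal theory.
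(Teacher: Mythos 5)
Your proposal correctly recognizes that this statement is an open conjecture which the paper itself does not prove in full, and your outline of the provable special case --- reduction via \cite{EO} and bosonization to showing $R=B(R[1])$, then using the rigidity of nondiagonal simple Nichols algebras (Proposition \ref{p4.23}) to force $\Phi|_{G_{R[1]}}$ to be abelian when $|G|$ is odd --- is exactly the route the paper takes in Section 4 (Proposition \ref{p5.8}, Proposition \ref{p5.9}, Corollary \ref{c4.9}, Theorem \ref{t5.10}, Theorem \ref{t4.1}). Your diagnosis of the remaining obstructions (nonabelian base groups, and three or more nondiagonal simple summands over the support group) likewise matches the paper's own assessment in Section 5.
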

 This conjecture is due to Etingof, Gelaki, Nikshych and Ostrik, hence will be called EGNO's conjecture in the following. It is a natural generalization of the well known Andruskiewitsch-Schneider conjecture \cite[Conjecture 1.4]{AS}. Our main classification result on finite-dimensional pointed coquasi-Hopf algebras will induce the following

\begin{theorem}\label{t4.1}
Suppose that $\mathcal{C}$ is a pointed finite tensor category with $G(\mathcal{C})$ an abelian group of odd order. Then $\mathcal{C}$ is tensor generated by objects of length $2$.
\end{theorem}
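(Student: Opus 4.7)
\emph{Plan of proof.} The strategy is to translate the assertion into a generation property of the corresponding finite-dimensional pointed coquasi-Hopf algebra and then reduce it to the Nichols algebra of the primitives. By \cite{EO}, $\mathcal{C}$ is tensor equivalent to $\comod \M$ for some finite-dimensional pointed coquasi-Hopf algebra $\M$ with coradical $\k G$. The tensor generation of $\mathcal{C}$ by length-two objects is equivalent to requiring that $\M$ be generated as a coquasi-Hopf algebra by its first coradical-filtration piece $\M_1$, since the length-two objects in $\comod \M$ are precisely the two-dimensional subcomodules, and together with $\M_0$ these span $\M_1$. A standard filtration argument further reduces the problem to showing that $\gr \M$ is generated by $(\gr \M)_0 \oplus (\gr \M)_1$.

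Using Lemma~\ref{l2.7} I would write $\gr \M \cong R \# \k G$, where $R = \bigoplus_{n \geq 0} R[n]$ is a connected coradically graded Hopf algebra in $_{\k G}^{\k G}\mathcal{YD}^\Phi$, so the generation question becomes $R = \langle R[1] \rangle$. By Proposition~\ref{p3.14} this follows provided $\Phi|_{G_{R[1]}}$ is an abelian 3-cocycle, which in turn (via Lemma~\ref{l3.8}) is implied by $R[1]$ being of diagonal type. Decompose $R[1] = \bigoplus_i V_i$ into simple summands and suppose, for contradiction, that some $V_{i_0}$ is nondiagonal. Since $V_{i_0}$ consists of primitive elements of $R$, the subalgebra $\langle V_{i_0} \rangle \subseteq R$ is a braided sub-Hopf algebra, obtained from $T_\Phi(V_{i_0})$ by quotienting out a Hopf ideal generated in degrees $\geq 2$. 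The maximality clause in Definition~\ref{d2.9} then exhibits $B(V_{i_0})$ as a further quotient of $\langle V_{i_0} \rangle$, so $\dim B(V_{i_0}) \leq \dim R < \infty$.

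The theorem therefore reduces to the key observation promised in the introduction: when $|G|$ is odd, $B(V)$ is infinite-dimensional for every simple nondiagonal $V \in {_{\k G}^{\k G}\mathcal{YD}^\Phi}$. I would derive this by excluding both outcomes of Proposition~\ref{p4.23}. Let $g = g_V$, write $n$ for its order, and let $\lambda$ be the scalar by which $g$ acts on $V$ (such a scalar exists by the proof of Proposition~\ref{p4.23}). Iterating the projective action \eqref{eq2.6} along $\langle g \rangle$ yields $\lambda^n = \prod_{i=1}^{n-1} \widetilde{\Phi}_g(g, g^i)$, a product of roots of unity whose orders divide $|G|$ and are therefore odd. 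This precludes $\lambda = -1$ for odd $n$, excluding (C1). For (C2), $V$ would be a two-dimensional simple projective $(G, \widetilde{\Phi}_g)$-representation of an abelian group; by the standard dimension formula for simple modules over twisted abelian group algebras, this forces the index $[G : Z]$ to equal $4$, where $Z$ is the subgroup of $\widetilde{\Phi}_g$-elements. Since $[G : Z]$ divides $|G|$, this is incompatible with $|G|$ odd, excluding (C2).

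The principal obstacle is the exclusion of (C2): while (C1) falls to a short root-of-unity parity check, ruling out (C2) requires invoking the classical structure theory of projective representations of finite abelian groups. With the key observation in hand, $R[1]$ must be of diagonal type, $R = B(R[1])$ is generated in degree one by Proposition~\ref{p3.14}, and reversing the chain of reductions delivers the tensor generation of $\mathcal{C}$ by its length-two objects.
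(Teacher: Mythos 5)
Your proposal is correct and follows essentially the same route as the paper: reduce via Proposition \ref{p5.8} and Lemma \ref{l2.7} to showing that the coinvariant algebra $R$ of $\gr \M$ satisfies $R=B(R[1])$, and rule out nondiagonal simple summands of $R[1]$ by excluding cases (C1) and (C2) of Proposition \ref{p4.23}, which is exactly the content of Propositions \ref{p5.3} and \ref{p5.9}, Corollary \ref{c4.9}, Proposition \ref{p3.14} and Theorem \ref{t5.10}. The only cosmetic differences are that you exclude (C2) via the index formula $(\dim V)^2=[G:Z]$ instead of the divisibility $\dim V\mid |G|$ of Proposition \ref{p5.3}, and that your root-of-unity computation for (C1) tacitly assumes $\Phi$ is one of the standard representatives of Proposition \ref{p2.26} (harmless, since replacing $\Phi$ by a cohomologous cocycle changes neither the eigenvalue $\lambda$ nor $\dim B(V)$), which is also what makes the paper's terse ``$\lambda\neq -1$ since $|g|$ is odd'' rigorous.
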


\subsection{Some preparations}
In this subsection, we will give some important properties of twisted Yetter-Drinfeld modules, and prove that each finite-dimensional Nichols algebra in $_{\k G}^{\k G}\mathcal{YD}^\Phi$ must be of diagonal type if the order $|G|$ of $G$ is odd. We need the following two propositions.
\begin{proposition}\label{p5.2}
Suppose that $V$ and $W$ are two simple objects in $_{\k G}^{\k G}\mathcal{YD}^\Phi$ such that $g_V=g_W.$ Then $\dim(V)=\dim(W)$.
\end{proposition}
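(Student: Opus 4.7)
The plan is to reduce the statement to a classical fact about projective representations of finite abelian groups. By Proposition \ref{p2.4}, the hypothesis $g_V=g_W=g$ says that both $V$ and $W$ are simple projective representations of $G$ with respect to the $2$-cocycle $\alpha:=\widetilde{\Phi}_g$, equivalently simple modules over the twisted group algebra $A:=\k^{\alpha}G$. The desired statement is therefore that all simple $A$-modules share a common dimension.

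My first step is to identify the centre of $A$ directly. Let
$$N:=\{h\in G\mid \alpha(h,k)=\alpha(k,h)\ \text{for all }k\in G\}$$
be the subgroup of $\alpha$-regular elements. In $A$ the commutation rule $\bar h\,\bar k=\frac{\alpha(h,k)}{\alpha(k,h)}\bar k\,\bar h$ shows that $Z(A)$ is precisely the $\k$-span of $\{\bar h:h\in N\}$. For $h_1,h_2\in N$, the regularity condition forces $\alpha(h_1,h_2)=\alpha(h_2,h_1)$, so $\bar h_1\bar h_2=\bar h_2\bar h_1$. Thus $Z(A)$ is a commutative semisimple $\k$-algebra of dimension $|N|$ over an algebraically closed field, and hence $Z(A)\cong\k^{|N|}$. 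Consequently
$$A\cong\bigoplus_{\chi\in\widehat{N}}M_{d_\chi}(\k),$$
where each simple $A$-module is indexed by its central character $\chi\colon N\to\k^*$ and has dimension $d_\chi$.

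The remaining and decisive step is to show that $d_\chi$ is independent of $\chi$. For any ordinary character $\eta\in\widehat{G}$, viewing $\eta$ as a $1$-dimensional $G$-representation with trivial cocycle, the tensor product $V\otimes\eta$ is again a simple $\alpha$-projective representation of the same dimension as $V$, whose central character on $N$ is $\chi_V\cdot\eta|_N$. Since $\k^*$ is divisible, the restriction map $\widehat{G}\to\widehat{N}$ is surjective, so $\widehat{G}$ acts transitively on $\widehat{N}$ through this twisting. It follows that all $d_\chi$ coincide, and in particular $\dim V=\dim W$.

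The main technical point that needs care is the first step, namely pinning down $Z(A)$ and observing that the restricted cocycle $\alpha|_N$ is automatically symmetric, so that $Z(A)$ is genuinely commutative and splits into $|N|$ lines. Once this structural description of $A$ as a direct sum of matrix algebras indexed by $\widehat{N}$ is in hand, the transitivity argument via tensoring with $\widehat{G}$ is immediate, and no explicit use of the form of $\Phi$ from Proposition \ref{p2.26} is required.
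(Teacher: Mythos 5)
Your argument is correct, but it follows a genuinely different route from the paper. The paper stays inside the braided category $_{\k G}^{\k G}\mathcal{YD}^{\Phi}$: it forms $V\otimes W^{*}$, observes via Proposition \ref{p2.5} that this is an \emph{ordinary} representation of the abelian group $G$ and hence contains a $1$-dimensional subobject $K$, and then concludes from $0\neq \Hom(K,V\otimes W^{*})\cong \Hom(K\otimes W,V)$ and simplicity that $\dim V=\dim W$ --- a three-line duality/adjunction argument. You instead analyse the twisted group algebra $\k^{\alpha}G$, $\alpha=\widetilde{\Phi}_{g}$, directly: you identify its centre with the span of the radical $N$ of the alternating bicharacter $\alpha(h,k)/\alpha(k,h)$, and then use the twisting action of $\widehat{G}$ on simples, together with surjectivity of the restriction $\widehat{G}\to\widehat{N}$ (divisibility of $\k^{*}$), to see that this action is transitive on central characters and hence equalizes all dimensions. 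Your computation of $Z(\k^{\alpha}G)$ is right, and the twisting step is sound (twisting by $\eta\in\widehat{G}$ preserves the cocycle, simplicity and dimension, and multiplies the central character by $\eta|_{N}$). Two small points deserve care: the central characters are quasi-characters of $N$ with respect to $\alpha|_{N}$ rather than literal elements of $\widehat{N}$ (harmless, since that set is a $\widehat{N}$-torsor, which is all your transitivity argument needs, or one can first trivialize the symmetric cocycle $\alpha|_{N}$ using that $\k^{*}$ is divisible); and you implicitly use semisimplicity of $\k^{\alpha}G$ and the fact that nonisomorphic simples have distinct central characters, both standard and also invoked by the paper in Proposition \ref{p5.3}. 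What your approach buys is more structure: it shows the number of simples is $|N|$ and, combined with $|G|=|N|d^{2}$, essentially reproves Proposition \ref{p5.3} as a byproduct; what the paper's approach buys is brevity and independence from any cocycle bookkeeping, since it only uses duality already available in the category.
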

\begin{proof}
By $\mathcal{C}$ we denote the tensor category $_{\k G}^{\k G}\mathcal{YD}^\Phi$.
Suppose $g_V=g_W=g.$ Then $V,W$ are $(G,\widetilde{\Phi}_g)$-representations. Let $W^*$ be the dual object of $W$, which is a $(G,\widetilde{\Phi}_{g^{-1}})$-representation. Hence $V\otimes W^*$ is an ordinary representation of $G$ by Proposition \ref{p2.5}. Note that $G$ is abelian, so simple $G$-representations are $1$-dimensional. Therefore we may take a $1$-dimensional subobject of $V\otimes W^*,$ say $K$. It follows from
\begin{equation}
0\neq \Hom_{\mathcal{C}}(K,V\otimes W^*)=\Hom_{\mathcal{C}}(K\otimes W,V)
\end{equation}
that $\dim(V)=\dim(W)$ since $V$ and $W$ are simple objects.
\end{proof}

\begin{proposition}\label{p5.3}
For each simple object $V$ in $_{\k G}^{\k G}\mathcal{YD}^\Phi$, we have $\dim(V) \mid |G|$.
\end{proposition}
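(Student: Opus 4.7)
The plan is to reduce the problem to the structure theory of the twisted group algebra $\k^{\widetilde{\Phi}_g} G$, where $g=g_V$. By Proposition \ref{p2.4}, the simple object $V$ equals $V_g$ for some $g\in G$, and it is a simple $(G,\widetilde{\Phi}_g)$-projective representation, i.e.\ a simple left module over $\k^{\widetilde{\Phi}_g}G$. Since $G$ is abelian, the whole question becomes one about the twisted group algebra of an abelian group.

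First I would introduce the alternating bicharacter
\[ \beta_g(h,k):=\frac{\widetilde{\Phi}_g(h,k)}{\widetilde{\Phi}_g(k,h)}, \qquad h,k\in G, \]
and check, using the $2$-cocycle identity for $\widetilde{\Phi}_g$ (together with the commutativity of $G$), that $\beta_g$ is a bimultiplicative map $G\times G\to \k^*$. Let
\[ Z_g:=\{h\in G\mid \beta_g(h,k)=1 \text{ for all } k\in G\} \]
be the radical of $\beta_g$, a subgroup of $G$. The basis element $u_h\in \k^{\widetilde{\Phi}_g}G$ with $h\in Z_g$ is precisely one whose image is central in $\k^{\widetilde{\Phi}_g}G$, so the center of $\k^{\widetilde{\Phi}_g}G$ has dimension $|Z_g|$. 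Equivalently, $\k^{\widetilde{\Phi}_g}G$ has exactly $|Z_g|$ isomorphism classes of simple modules.

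Next I would invoke (or prove directly by a standard Morita argument) the fact that, for an abelian group, the twisted group algebra $\k^{\widetilde{\Phi}_g}G$ is Morita equivalent to the commutative algebra $\k^{\widetilde{\Phi}_g}Z_g$. Since $\beta_g$ is trivial on $Z_g$, the latter is a commutative semisimple algebra of dimension $|Z_g|$, hence isomorphic to $\k^{|Z_g|}$. Consequently all simple $\k^{\widetilde{\Phi}_g}G$-modules have the same dimension $d$, which recovers Proposition \ref{p5.2}, and comparing total dimensions gives
\[ |G| \;=\; \dim_\k \k^{\widetilde{\Phi}_g}G \;=\; |Z_g|\cdot d^2. \]
In particular $d^2\mid |G|$, hence $d\mid |G|$, and since $\dim V=d$ this finishes the proof.

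The only genuinely nontrivial step is establishing that $\k^{\widetilde{\Phi}_g}G$ is a direct sum of matrix algebras of one common size $d$ with $d^2=[G:Z_g]$; this is where the main obstacle lies. If one prefers not to quote the classical Morita statement, an alternative is to lift $\widetilde{\Phi}_g$ to a central extension $1\to \mu_N\to \tilde{G}\to G\to 1$, observe that $V$ becomes an ordinary simple $\tilde{G}$-representation on which $\mu_N$ acts by a fixed character, and then deduce $d^2\mid|G|$ by combining the commutative structure of $\tilde{G}/\mu_N=G$ with the fact that the $\mu_N$-isotypic component of $\k\tilde{G}$ has dimension $|G|$; this is essentially the same computation in disguise. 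Either way, the payoff $\dim V \mid |G|$ follows immediately from $d^2\mid |G|$.
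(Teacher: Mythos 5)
Your argument is correct, but it routes through heavier structure theory than the paper does. The paper's proof is a two-line Wedderburn count: it identifies $V$ with a simple module over the twisted group algebra $\k[G]_{\widetilde{\Phi}_g}$, quotes Proposition \ref{p5.2} (proved earlier by a purely categorical argument, taking a one-dimensional subobject of $V\otimes W^*$) to know that all simples attached to the same $g$ have a common dimension $n$, and then uses semisimplicity to get $mn^2=|G|$, hence $n\mid |G|$. You instead re-derive the constancy of dimensions from scratch: you pass to the alternating bicharacter $\beta_g$, identify the center of the twisted group algebra with the span of $u_h$, $h\in Z_g$, and invoke the classical fact that $\k^{\widetilde{\Phi}_g}G$ is a direct sum of $|Z_g|$ matrix algebras of one common size $d$ with $d^2=[G:Z_g]$ (equivalently, Morita equivalence with the commutative algebra $\k^{\widetilde{\Phi}_g}Z_g$, or the Heisenberg-type lifting you sketch). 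That step is asserted rather than proved, but it is standard and your fallback via a central extension is sound. What your route buys is a sharper quantitative statement, $d^2=[G:Z_g]$ and in particular $d^2\mid |G|$, plus an independent proof of Proposition \ref{p5.2}; what it costs is exactly the "genuinely nontrivial step" you flag, which the paper avoids entirely because Proposition \ref{p5.2} is already available. (Note that the paper's count $mn^2=|G|$ also yields $n^2\mid |G|$, so even the stronger divisibility is implicit there.)
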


\begin{proof}
According to Proposition \ref{p2.4}, $V$ is a simple Yetter-Drinfeld module in $_{\k G}^{\k G}\mathcal{YD}^\Phi$ with $g_V=g$ if and only if $V$ is a simple $(G,\widetilde{\Phi}_g)$-representation.
By $\k[G]_{\widetilde{\Phi}_g}$ we denote the twisted group algebra of $G$, i.e. the algebra with a basis $\{h|h\in G\}$ and product determined by
$$f\cdot h=\widetilde{\Phi}_g(f,h)fh,\quad \forall f,h\in G.$$
Note that the representation category of $\k[G]_{\widetilde{\Phi}_g}$ is equivalent to the category of projective representations of $G$ with respect to $\widetilde{\Phi}_g$. Let $\{V^i|1\leq i\leq m\}$ be a set of iso-classes of simple representations of $\k[G]_{\widetilde{\Phi}_g}$.
By Proposition \ref{p5.2}, all simple representations of $\k[G]_{\widetilde{\Phi}_g}$ have the same dimension, which will be denoted by $n$. Since $\k[G]_{\widetilde{\Phi}_g}$ is a semisimple algebra, we have $mn^2=\dim(\k[G]_{\widetilde{\Phi}_g})=|G|$, hence $n \mid |G|$.
\end{proof}

Now we will prove two technical lemmas, which are necessary for our next exploration.

\begin{lemma}\label{l5.4}
Suppose that $\{g_1,g_2\cdots,g_n\}\subset G$ generates $G$, and $\widetilde{\Phi}_{g_i}(g_j,g_k)=\widetilde{\Phi}_{g_i}(g_k,g_j)$ for all $1\leq i,j,k\leq n.$ Then $\Phi$ is an abelian $3$-cocycle on $G$.
\end{lemma}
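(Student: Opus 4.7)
The plan is to reduce the statement to a trilinear extension argument. First I would observe that, by Proposition~\ref{p2.4} together with Remark~\ref{rm2.4}, $\Phi$ is an abelian $3$-cocycle on $G$ if and only if each of the associated $2$-cocycles $\widetilde{\Phi}_g$ is symmetric; equivalently, the identity to be established is
\[
\widetilde{\Phi}_g(h,k)=\widetilde{\Phi}_g(k,h)\qquad \text{for all } g,h,k\in G.
\]
The hypothesis provides exactly this identity when all three of $g,h,k$ are chosen from the generators $g_1,\ldots,g_n$, so the only remaining task is to propagate it to the whole of $G^3$.

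To carry out this propagation, I would introduce the auxiliary map
\[
B\colon G\times G\times G\to \k^*,\qquad B(g,h,k):=\frac{\widetilde{\Phi}_g(h,k)}{\widetilde{\Phi}_g(k,h)},
\]
and verify that $B$ is trilinear, i.e.\ multiplicative in each of its three variables. Multiplicativity in the first slot is immediate from the identity $\widetilde{\Phi}_{gh}=\widetilde{\Phi}_g \widetilde{\Phi}_h$ already recorded in the excerpt. Multiplicativity in the second and third slots relies on the standard but essential fact that, for any $2$-cocycle $\alpha$ on an abelian group $G$, the alternating ratio $(h,k)\mapsto \alpha(h,k)/\alpha(k,h)$ is a bicharacter of $G$; this can be checked by a short, direct manipulation of the $2$-cocycle relation combined with the commutativity of $G$.

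Once $B$ is shown to be trilinear, the conclusion is automatic: the hypothesis gives $B(g_i,g_j,g_k)=1$ for every triple of generators, and multilinearity together with the assumption that $\{g_1,\ldots,g_n\}$ generates $G$ forces $B\equiv 1$ on all of $G^3$. Consequently every $\widetilde{\Phi}_g$ is symmetric and $\Phi$ is an abelian $3$-cocycle. The only non-formal ingredient in the whole argument is the bicharacter property of the alternating $2$-cocycle, which is where I expect the bulk (and only real substance) of the computation to lie; everything else reduces to bookkeeping.
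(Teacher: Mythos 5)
Your argument is correct, and it reaches the conclusion by a genuinely different route than the paper. The paper first replaces $\Phi$ by its cohomologous representative \eqref{2.28} (Proposition \ref{p2.26}), factors it as $\Phi=\Psi\Gamma$ with $\Psi$ the abelian part and $\Gamma$ the part carrying the coefficients $c_{rst}$, observes from the explicit formula that $(e,f,g)\mapsto\widetilde{\Gamma}_e(f,g)$ is multiplicative in each slot, propagates the symmetry hypothesis from the generators to all of $G$, and then evaluates at the standard generators to force $c_{rst}=0$, so $\Phi=\Psi$ is abelian. You instead work intrinsically with the full antisymmetrization $B(g,h,k)=\widetilde{\Phi}_g(h,k)/\widetilde{\Phi}_g(k,h)$: trilinearity in the last two slots comes from the standard (and correct) fact that the alternation of a $2$-cocycle on an abelian group is a bicharacter, trilinearity in the first slot from the recorded identity $\widetilde{\Phi}_{gh}=\widetilde{\Phi}_g\widetilde{\Phi}_h$, and the conclusion from the characterization (Proposition \ref{p2.4} and Remark \ref{rm2.4}) that $\Phi$ is abelian precisely when every $\widetilde{\Phi}_g$ is symmetric. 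Your route avoids the explicit coefficient bookkeeping and yields the cleaner intermediate statement that all $\widetilde{\Phi}_g$ are symmetric; the paper's route is more computational but identifies exactly which representative cocycles survive. One caveat you should make explicit: the identity $\widetilde{\Phi}_{gh}=\widetilde{\Phi}_g\widetilde{\Phi}_h$ is verified in the paper only for the representatives \eqref{2.28}, and it can fail after multiplying $\Phi$ by a $3$-coboundary (whereas each $\widetilde{(\partial b)}_g$ is symmetric, so the ratio $B$ and both the hypothesis and the conclusion of the lemma are coboundary-invariant). So, like the paper, you should first pass to the normal form of $\Phi$ before invoking that identity; with that one sentence added, your proof is complete.
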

\begin{proof}
Suppose $G=\langle e_1\rangle \times \cdots \times  \langle e_n\rangle$ and $m_i=|e_i|$ for $1\leq i\leq n$. By Proposition \ref{p2.26}, we can assume that $\Phi$ is of the form \eqref{2.28}. Let $\Phi=\Psi\Gamma$, where $\Psi$ and $\Gamma$ are $3$-cocycles on $G$ given by
\begin{eqnarray}
&&\Psi(e_{1}^{i_{1}}\cdots e_{n}^{i_{n}},e_{1}^{j_{1}}\cdots e_{n}^{j_{n}},e_{1}^{k_{1}}\cdots e_{n}^{k_{n}}) =\prod_{l=1}^{n}\zeta_{m_l}^{c_{l}i_{l}[\frac{j_{l}+k_{l}}{m_{l}}]}
\prod_{1\leq s<t\leq n}\zeta_{m_{t}}^{c_{st}i_{t}[\frac{j_{s}+k_{s}}{m_{s}}]},\label{5.2}\\
&&\Gamma(e_{1}^{i_{1}}\cdots e_{n}^{i_{n}},e_{1}^{j_{1}}\cdots e_{n}^{j_{n}},e_{1}^{k_{1}}\cdots e_{n}^{k_{n}})=
\prod_{1\leq r<s<t\leq n}\zeta_{(m_{r},m_{s},m_{t})}^{c_{rst}k_{r}j_{s}i_{t}}.\label{5.3}
\end{eqnarray}
Since $\Psi$ is an abelian $3$-cocycle on $G$, $\widetilde{\Psi}_{g_i}(g_j,g_k)=\widetilde{\Psi}_{g_i}(g_j,g_k)$ for all $1\leq i,j,k\leq n$.
Thus $\widetilde{\Phi}_{g_i}(g_j,g_k)=\widetilde{\Phi}_{g_i}(g_j,g_k)$ implies
\begin{equation}\label{5.4}
\widetilde{\Gamma}_{g_i}(g_j,g_k)=\widetilde{\Gamma}_{g_i}(g_j,g_k)
\end{equation} for $1\leq i,j,k\leq n$.
From \eqref{5.3}, it follows that
\begin{eqnarray*}
\widetilde{\Gamma}_{ef}(g,h)&=&\widetilde{\Gamma}_{e}(g,h)\widetilde{\Gamma}_{f}(g,h),\\
\widetilde{\Gamma}_{e}(fg,h)&=&\widetilde{\Gamma}_{e}(f,h)\widetilde{\Gamma}_{e}(g,h),\\
\widetilde{\Gamma}_{e}(f,gh)&=&\widetilde{\Gamma}_{e}(f,g)\widetilde{\Gamma}_{e}(f,h)\\
\end{eqnarray*}
for all $e,f,g,h\in G$.
So \eqref{5.4} implies
\begin{equation*}
\widetilde{\Gamma}_f(g,h)=\widetilde{\Gamma}_f(h,g), \ \forall f,g,h\in G,
\end{equation*} since $G=\langle g_1,\cdots g_n\rangle$.
Hence $c_{rst}=0 $ follows from $$\widetilde{\Gamma}_{e_r}(e_s,e_t)=\widetilde{\Gamma}_{e_r}(e_t,e_s),\ 1\leq r<s<t\leq n,$$ and $\Phi=\Psi$ is an abelian $3$-cocycle on $G$.
\end{proof}

\begin{lemma}\label{l5.5}
Let $g_1,g_2,g_3$ be elements in $G.$ Then the following three identities
\begin{eqnarray}
\widetilde{\Phi}_{g_1}(g_2,g_3)&=&\widetilde{\Phi}_{g_1}(g_3,g_2),\label{5.5}\\
\widetilde{\Phi}_{g_2}(g_1,g_3)&=&\widetilde{\Phi}_{g_2}(g_3,g_1),\label{5.6}\\
\widetilde{\Phi}_{g_3}(g_1,g_2)&=&\widetilde{\Phi}_{g_3}(g_2,g_1)\label{5.7}
\end{eqnarray}
are mutually equivalent.
\end{lemma}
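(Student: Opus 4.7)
The plan is purely computational: I would expand each of the three identities (5.5), (5.6), (5.7) using the definition
$$\widetilde{\Phi}_g(x,y)=\frac{\Phi(g,x,y)\,\Phi(x,y,g)}{\Phi(x,g,y)},$$
and show that after clearing denominators all three become one and the same equation in the six values $\Phi(g_{\sigma(1)},g_{\sigma(2)},g_{\sigma(3)})$ with $\sigma\in S_3$.

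To make this transparent I would introduce the shorthand $A_{ijk}:=\Phi(g_i,g_j,g_k)$ for $i,j,k\in\{1,2,3\}$, so that $\widetilde{\Phi}_{g_i}(g_j,g_k)=A_{ijk}A_{jki}/A_{jik}$. Identity (5.5) then reads $A_{123}A_{231}/A_{213}=A_{132}A_{321}/A_{312}$, which cross-multiplies to
$$A_{123}\,A_{231}\,A_{312}\;=\;A_{132}\,A_{321}\,A_{213}. \qquad (\ast)$$
A parallel rewriting shows that (5.6) cross-multiplies to $A_{213}A_{132}A_{321}=A_{231}A_{312}A_{123}$ and (5.7) to $A_{312}A_{123}A_{231}=A_{321}A_{213}A_{132}$. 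Since both sides of $(\ast)$ are manifestly invariant under the cyclic relabelling $1\mapsto 2\mapsto 3\mapsto 1$ of the indices, the three cross-multiplied forms are literally the same equation $(\ast)$. Therefore (5.5), (5.6) and (5.7) are pairwise equivalent.

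I anticipate no real obstacle here: the lemma is a purely formal consequence of the definition of $\widetilde{\Phi}_g$, and in particular does \emph{not} invoke the $3$-cocycle condition on $\Phi$. The only thing that needs care is the bookkeeping of which three of the six $A_{ijk}$'s land on each side after cross-multiplying, and the observation that the left-hand and right-hand products in $(\ast)$ correspond respectively to the even and odd cyclic permutations of $(1,2,3)$—it is this total symmetry in the three indices that forces the three differently-indexed identities to collapse to the single relation $(\ast)$.
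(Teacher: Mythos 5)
Your proposal is correct and follows essentially the same route as the paper: a direct expansion of the definition of $\widetilde{\Phi}_g$, with the equivalences obtained by rearranging the same six values $\Phi(g_{\sigma(1)},g_{\sigma(2)},g_{\sigma(3)})$ and never invoking the $3$-cocycle condition. The only cosmetic difference is that you reduce all three identities to the single symmetric relation $(\ast)$, whereas the paper passes from \eqref{5.5} to \eqref{5.6} by multiplying through by an explicit scalar; the computations are the same.
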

\begin{proof}
Suppose $\widetilde{\Phi}_{g_1}(g_2,g_3)=\widetilde{\Phi}_{g_1}(g_3,g_2)$, that is
\begin{equation}\label{5.8}
\frac{\Phi(g_1,g_2,g_3)\Phi(g_2,g_3,g_1)}{\Phi(g_2,g_1,g_3)}= \frac{\Phi(g_1,g_3,g_2)\Phi(g_3,g_2,g_1)}{\Phi(g_3,g_1,g_2)}.
\end{equation}
Multiplying the identity \eqref{5.8} with the scalar $\frac{\Phi(g_2,g_1,g_3)\Phi(g_3,g_1,g_2)}{\Phi(g_1,g_2,g_3)\Phi(g_3,g_2,g_1)}$, we get
\begin{equation*}
\widetilde{\Phi}_{g_2}(g_1,g_3)=\frac{\Phi(g_2,g_1,g_3)\Phi(g_1,g_3,g_2)}{\Phi(g_1,g_2,g_3)}= \frac{\Phi(g_2,g_3,g_1)\Phi(g_3,g_1,g_2)}{\Phi(g_3,g_2,g_1)}=\widetilde{\Phi}_{g_2}(g_3,g_1).
\end{equation*}

The equivalence between \eqref{5.7} and  \eqref{5.5}, \eqref{5.6} can be proved similarly.
\end{proof}

\begin{proposition}\label{p5.6}
Let $V=\oplus_{i=1}^n V_i$ be an object of ${_{\k G}^{\k G}\mathcal{YD}^\Phi}$, where the $V_i$'s are simple objects. Let $H=G_V$ be the support group of $V$. If $\Phi|_H$ is not an abelian $3$-cocycle on $H$, then $n\geq 3$ and at least three summands, say $V_{i_1},V_{i_2},V_{i_3}$ of $V,$ are of nondiagonal type.
\end{proposition}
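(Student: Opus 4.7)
The plan is to chain together Lemmas \ref{l5.4} and \ref{l5.5} with Remark \ref{rm2.4}. I would write $g_i := g_{V_i}$ so that $H = \langle g_1, \ldots, g_n\rangle$. Since $\Phi|_H$ is assumed non-abelian, the contrapositive of Lemma \ref{l5.4}, applied to $H$ with generating set $\{g_1, \ldots, g_n\}$, produces indices $i, j, k \in \{1, \ldots, n\}$ such that
\[
\widetilde{\Phi}_{g_i}(g_j, g_k) \neq \widetilde{\Phi}_{g_i}(g_k, g_j).
\]

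Next I would verify that $g_i, g_j, g_k$ are pairwise distinct in $H$. Certainly $g_j \neq g_k$, else the two sides above coincide. Moreover, the identity $\widetilde{\Phi}_g(g, h) = \widetilde{\Phi}_g(h, g)$ established inside the proof of Proposition \ref{p4.23} forces $g_i \neq g_j$ and $g_i \neq g_k$; otherwise the asymmetry would collapse to an equality. Since distinct group elements come from distinct indices, after relabelling $(i, j, k)$ as $(i_1, i_2, i_3)$ we obtain three different summands $V_{i_1}, V_{i_2}, V_{i_3}$, and in particular $n \geq 3$. As a sanity check, if $H$ were generated by at most two elements, then $H$ would be cyclic or a product of two cyclic groups, whence Proposition \ref{p4.18} would make $\Phi|_H$ abelian, contradicting the hypothesis.

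To upgrade this single asymmetry to three, I would invoke Lemma \ref{l5.5}, which says the three symmetry relations obtained by cyclically permuting the roles of $g_{i_1}, g_{i_2}, g_{i_3}$ are mutually equivalent; hence all three fail simultaneously. Consequently each of the 2-cocycles $\widetilde{\Phi}_{g_{i_l}}$, $l = 1, 2, 3$, fails to be symmetric on $H$, so by Remark \ref{rm2.4} each $V_{i_l}$ has dimension $\geq 2$ and is therefore of nondiagonal type.

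The main subtlety I foresee is the distinctness step above: one has to carefully exclude the possibility that $g_i, g_j, g_k$ collapse to two or fewer elements, which is precisely what the identity $\widetilde{\Phi}_g(g, h) = \widetilde{\Phi}_g(h, g)$ delivers. Beyond this bookkeeping, the argument is a direct concatenation of results already established in the paper, so I do not anticipate any further technical obstacle.
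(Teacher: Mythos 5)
Your argument is correct and follows essentially the same route as the paper's own proof: the contrapositive of Lemma \ref{l5.4} applied to the support group, Lemma \ref{l5.5} to propagate the single asymmetry to all three $2$-cocycles $\widetilde{\Phi}_{g_{i_1}},\widetilde{\Phi}_{g_{i_2}},\widetilde{\Phi}_{g_{i_3}}$, and Remark \ref{rm2.4} to conclude that the three summands are nondiagonal. The only difference is that you explicitly check, via the identity $\widetilde{\Phi}_g(g,h)=\widetilde{\Phi}_g(h,g)$ from the proof of Proposition \ref{p4.23}, that $g_{i_1},g_{i_2},g_{i_3}$ (hence the indices) are pairwise distinct so that $n\geq 3$ genuinely follows; this bookkeeping step is left implicit in the paper, so your version is a slightly more careful rendering of the same argument rather than a different one.
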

\begin{proof}
Suppose $g_{V_i}=g_i$ for $1\leq i\leq n$, then $H=\langle g_1,\cdots ,g_n\rangle$. Since $\Phi|_H$ is not an abelian $3$-cocycle on $H$, there exist $i,j,k$ such that $\widetilde{\Phi}_{g_i}(g_j,g_k)\neq \widetilde{\Phi}_{g_i}(g_k,g_j)$ according to Lemma \ref{l5.4}. By Lemma \ref{l5.5}, we also have
\begin{eqnarray*}
\widetilde{\Phi}_{g_j}(g_i,g_k)&\neq& \widetilde{\Phi}_{g_j}(g_k,g_i),\\
\widetilde{\Phi}_{g_k}(g_i,g_j)&\neq& \widetilde{\Phi}_{g_k}(g_j,g_i).
\end{eqnarray*}
Hence the $2$-cocycles $\widetilde{\Phi}_{g_i},\widetilde{\Phi}_{g_j},\widetilde{\Phi}_{g_k}$ on $G$ are not symmetric, this implies $V_i,V_j,V_k$ are simple Yetter-Drinfeld modules of nondiagonal type.
\end{proof}

Now we can prove the following proposition, which says that if the order of $G$ is odd, then each finite-dimensional Nichols algebra in $_{\k G}^{\k G} \mathcal{YD}^\Phi$ must be of diagonal type.

\begin{proposition}\label{p5.9}
Let $G$ be a finite abelian group of odd order and $\Phi$ be a $3$-cocycle on $G$. Suppose that $V\in {_{\k G}^{\k G} \mathcal{YD}^\Phi}$ is not diagonal. Then $B(V)$ is infinite-dimensional.
\end{proposition}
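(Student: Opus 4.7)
The plan is to reduce to the case of a simple nondiagonal summand of $V$ and then dispose of the two finite-dimensional cases permitted by Proposition \ref{p4.23} via parity arguments.

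Since $_{\k G}^{\k G}\mathcal{YD}^{\Phi}$ is semisimple, I first decompose $V=\bigoplus_{i=1}^{l}V_{i}$ into simple subobjects. Because $V$ is not diagonal, Remark \ref{rm2.4} forces $\dim V_{i_0}\geq 2$ for some index $i_0$, so that $V_{i_0}$ is a simple twisted Yetter--Drinfeld module of nondiagonal type. The inclusion $V_{i_0}\hookrightarrow V$ of Yetter--Drinfeld subobjects induces a graded braided Hopf algebra embedding $B(V_{i_0})\hookrightarrow B(V)$ (standard: the subalgebra of $B(V)$ generated by $V_{i_0}$ has primitive part $V_{i_0}$, hence must coincide with $B(V_{i_0})$). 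Therefore it suffices to show that $B(V_{i_0})$ alone is infinite-dimensional. Set $g=g_{V_{i_0}}$ and let $n=|g|$; since $n\mid|G|$, the integer $n$ is odd.

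By Proposition \ref{p4.23}, the finite-dimensionality of $B(V_{i_0})$ forces one of the conditions (C1) $g\triangleright v=-v$ for all $v\in V_{i_0}$, or (C2) $\dim V_{i_0}=2$ with $g$ acting by a primitive cube root of unity. Case (C2) is impossible because Proposition \ref{p5.3} gives $\dim V_{i_0}\mid|G|$, whereas $2\nmid|G|$.

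To exclude (C1), I combine Propositions \ref{p4.19} and \ref{p4.11}. Proposition \ref{p4.19} produces a diagonal $V'\in{_{\k H}^{\k H}\mathcal{YD}^{\Psi}}$ with $H=\langle g\rangle\cong\mathbb{Z}_n$ and $\Psi=\Phi|_{H}$ such that $B(V_{i_0})\cong B(V')$; under this isomorphism the action of $g$ on each one-dimensional summand of $V'$ remains scalar multiplication by $-1$. Lifting via the projection of \eqref{e4.4} to the cyclic group $\mathbbm{H}\cong\mathbb{Z}_{n^{2}}$ and then twisting by a 2-cochain $J$ with $\partial J=\pi^{*}(\Psi)$, Proposition \ref{p4.11} yields an ordinary Nichols algebra of diagonal type $B(V'')$ in $_{\k\mathbbm{H}}^{\k\mathbbm{H}}\mathcal{YD}$ whose diagonal structure constants are preserved by the twist. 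In particular the generator $\mathbbm{g}$ of $\mathbbm{H}$ acts on every one-dimensional summand of $V''$ by $-1$. But in an ordinary Yetter--Drinfeld module over the cyclic group $\mathbbm{H}$ of odd order $n^{2}$, the action of $\mathbbm{g}$ must be an $n^{2}$-th root of unity, hence of odd order, ruling out $-1$. This contradicts (C1) and completes the argument.

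The delicate bookkeeping lies in verifying that the scalar $-1$ is transported faithfully through the isomorphism of Proposition \ref{p4.19} and the lift/twist of Proposition \ref{p4.11}; once that is checked, the parity of $n^{2}$ closes the case. The Nichols subalgebra embedding $B(V_{i_0})\hookrightarrow B(V)$ is routine.
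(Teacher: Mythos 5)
Your proposal is correct and follows essentially the same route as the paper: decompose $V$, pick a simple nondiagonal summand, invoke Proposition \ref{p4.23}, rule out (C2) via Proposition \ref{p5.3} and the oddness of $|G|$, and rule out (C1) via the oddness of the order of the grading group element. The only difference is that where the paper simply asserts the eigenvalue $-1$ is impossible because $|g|$ is odd, you justify this via the lift-and-twist reduction to an ordinary Yetter--Drinfeld module over $\Z_{n^2}$ (and you make the implicit embedding $B(U)\hookrightarrow B(V)$ explicit), which is a legitimate and somewhat more careful filling-in of the same argument.
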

\begin{proof}
By assumption, there is a summand $U$ of $V$ such that $U$ is a simple Yetter-Drinfeld module of nondiagonal type. Suppose that $g=g_U.$ Then there exists some $\lambda\in \k^*$ such that
 $$g\triangleright u=\lambda u, \quad \forall u\in U.$$
 Since $G$ is odd, we have $\dim(U)\neq 2$ by Proposition \ref{p5.3}. Hence, $U$ does not satisfy the condition $\mathrm{C2}$ of Proposition \ref{p4.23}.
 It is also obvious that $\lambda\neq -1$ since the order $|g|$ of $g$ is odd. Thus $U$ does not satisfy the condition $\mathrm{C1}$ of Proposition \ref{p4.23} either.
 So $B(U)$ must be infinite-dimensional, therefore so is $B(V).$
\end{proof}

\begin{corollary}\label{c4.9}
Let $G$ be a finite abelian group of odd order and $\Phi$ be a $3$-cocycle on $G$. Suppose $V \in {_{\k G}^{\k G} \mathcal{YD}^\Phi}$ such that $\Phi |_{G_V}$ is not an abelian $3$-cocycle on $G_V.$ Then $B(V)$ is infinite-dimensional.
\end{corollary}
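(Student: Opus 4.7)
The plan is to chain together the two main propositions that immediately precede the corollary. Specifically, I would combine Proposition \ref{p5.6}, which detects nondiagonal simple summands when the restricted cocycle fails to be abelian, with Proposition \ref{p5.9}, which rules out finite-dimensional Nichols algebras in the odd-order setting as soon as one nondiagonal summand appears.

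Concretely, I would argue as follows. Write $V=\bigoplus_{i=1}^n V_i$ as a direct sum of simple twisted Yetter-Drinfeld modules in $_{\k G}^{\k G}\mathcal{YD}^\Phi$. Since $\Phi|_{G_V}$ is not an abelian $3$-cocycle on the support group $G_V=\langle g_{V_1},\dots,g_{V_n}\rangle$, Proposition \ref{p5.6} applies and yields at least three indices $i_1,i_2,i_3$ for which the summands $V_{i_1},V_{i_2},V_{i_3}$ are of nondiagonal type. In particular, $V$ itself is not of diagonal type in the sense of the preceding subsection. Now Proposition \ref{p5.9}, whose hypothesis is exactly that $G$ has odd order and $V$ is not diagonal, forces $\dim B(V)=\infty$.

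Since each of the two ingredients is already proved in the paper, there is essentially no obstacle: the only thing to verify is that the hypotheses of Proposition \ref{p5.6} and Proposition \ref{p5.9} are genuinely met by the hypothesis of the corollary. For Proposition \ref{p5.6} this is immediate from the assumption on $\Phi|_{G_V}$, and for Proposition \ref{p5.9} it follows from the conclusion that $V$ contains a nondiagonal simple summand. Thus the corollary is a direct consequence, and the proof can be stated in a couple of lines without any new computation. The only subtlety worth flagging is the implicit use of the fact that $G_V\subseteq G$ still has odd order (a trivial consequence of Lagrange's theorem), which is what lets Proposition \ref{p5.9} fire through the restriction to $G_V$ whenever one wishes to reformulate the argument entirely inside $_{\k G_V}^{\k G_V}\mathcal{YD}^{\Phi|_{G_V}}$ via Corollary \ref{c3.5}.
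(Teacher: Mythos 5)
Your proposal is correct and follows essentially the same route as the paper: the paper's proof likewise chains Proposition \ref{p5.6} (producing a nondiagonal simple summand $U$ of $V$) with Proposition \ref{p5.9} to conclude that $B(U)$, and hence $B(V)$, is infinite-dimensional. Your variant of applying Proposition \ref{p5.9} directly to $V$ (which is not diagonal once it has a nondiagonal simple summand) is the same argument, since that proposition's proof itself reduces to such a summand.
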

\begin{proof}
According to Proposition \ref{p5.6}, there exists a nondiagonal simple submodule $U$ of $V.$ By Proposition \ref{p5.9}, $B(U)$ is infinite-dimensional, and so is $B(V)$.
\end{proof}

\subsection{A proof of Theorem \ref{t4.1}}
In this subsection we will prove Theorem \ref{t4.1} in several steps.
Recall that a comodule of a finite-dimensional coquasi-Hopf algebra $M$ is called cofree if it is isomorphic to $M^{\oplus n}$ as comodules for an integer $n\geq 1$. It is well-known that any finite-dimensional module of an algebra is a quotient of a free module. Dually, any finite-dimensional comodule of a coalgebra is a subcomodule of a cofree comodule.
%Let $(C,\D,\varepsilon)$ be a finite-dimensional coalgebra, $V$ a finite-dimensional $C$-comodule with a basis $\{X_1,\cdots ,X_n\}$.
%For each $1\leq i\leq n$, define a injective linear map $f_i:C\to C^{\oplus n}$
%by $$f(X)=(\underbrace{0,\cdots,0,X}_i,\cdots, 0).$$
%Let $\{m_i^j|1\leq i,j\leq n\}$ be elements of $C$ such that
%$\delta(X_i)=\sum_{j=1}^nm_i^j\otimes X_j,\ 1\leq i\leq n.$ So for each $1\leq i\leq n$, we have
%$$(\D\otimes \id)\circ \delta(X_i)=\sum_{l=1}^n\D(m_i^j)\otimes X_j= (\id\otimes \delta)\circ \delta(X_i)=\sum_{l=1}^n\sum_{j=1}^n m_i^l\otimes m_l^j\otimes X_j.$$ This implies $\D(m_i^j)=\sum_{l=1}^n m_i^l\otimes m_l^j.$
%
%Define a linear map
%\begin{eqnarray*}
%f:V&\To& C^{\oplus n}\\
%  X_i&\to& \sum_{j=1}^nf_j(m_i^j).
%\end{eqnarray*}
%Then $f$ is an injective comodule morphism follows from the commutative diagram
%\begin{equation*}
%\xymatrix@C=1.5cm{ X_i\ar[r]^{\delta}\ar[d]_{f}&\sum_{j=1}^n m_i^j\otimes X_j\ar[d]^{\id\otimes f}\\
%\sum_{i=1}^nf_j(m_i^j)\ar[r]^{\delta=\D}& \sum_{j,l=1}^nm_i^l\otimes f_j(m_l^j).}
%\end{equation*}
%Hence $V$ is a subcomodule of $C^{\oplus n}$.

\begin{proposition}\label{p5.8}
Suppose that $M$ is a finite-dimensional pointed coquasi-Hopf algebra. Then $M$ is generated by grouplike and skew-primitive elements if and only if $\comod(M)$ is tensor generated by objects of length $2$.
\end{proposition}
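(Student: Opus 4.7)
The plan is to introduce the subalgebra $N \subseteq M$ generated by $M_{1}$, that is, by all grouplike and skew-primitive elements, and to show that both sides of the proposition are equivalent to the single condition $N = M$. Since the hypothesis ``$M$ is generated by grouplikes and skew-primitives'' is literally $N = M$, what requires proof is that $\comod(M)$ is tensor generated by objects of length $\le 2$ if and only if $N = M$. I would establish this by comparing $\comod(M)$ with $\comod(N)$ along the inclusion $N \hookrightarrow M$.

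First I would verify that $N$ is a coquasi-Hopf subalgebra of $M$. From $\Delta(M_{1}) \subseteq M_{1} \otimes M_{0} + M_{0} \otimes M_{1} \subseteq N \otimes N$, together with the coquasi-Hopf axiom that multiplication is a coalgebra homomorphism (so $\Delta(ab) = \Delta(a)\Delta(b)$), one gets $\Delta(N) \subseteq N \otimes N$. Since the antipode preserves the coradical filtration, $S(M_{1}) \subseteq M_{1} \subseteq N$, hence $S(N) \subseteq N$. Therefore $\comod(N)$ sits in $\comod(M)$ as a full tensor subcategory.

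Next I would identify the tensor subcategory $\mathcal{L} \subseteq \comod(M)$ generated by the length-$\le 2$ objects with $\comod(N)$. For $\mathcal{L} \subseteq \comod(N)$, observe that every length-$\le 2$ comodule has its coefficient coalgebra contained in $M_{1} \subseteq N$, and $\comod(N)$ is stable under tensor products, direct sums, and subquotients. For the reverse inclusion, decompose $M_{1} = \sum_{i} V_{i}$ into length-$\le 2$ subcomodules of $M$. Because $m : M \otimes M \to M$ is a coalgebra map, its restriction $m : V_{i} \otimes V_{j} \to M$ is a morphism in $\comod(M)$; iterating, the images of the maps $V_{i_{1}} \otimes \cdots \otimes V_{i_{k}} \to M$ together span exactly $N$. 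Since $\dim N < \infty$, finitely many such maps suffice, so the regular $N$-comodule $N$ is a quotient of a finite direct sum of tensor products of length-$\le 2$ objects, and hence belongs to $\mathcal{L}$. As every object of $\comod(N)$ embeds in some $N^{\oplus k}$, this yields $\comod(N) \subseteq \mathcal{L}$.

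Finally, I would combine these: if $\comod(M) = \mathcal{L}$, then $\comod(M) = \comod(N)$, so the right regular $M$-comodule factors through $N$, i.e., $\Delta(M) \subseteq M \otimes N$; applying the counit identity $(\epsilon \otimes \id)\Delta = \id$ shows every $m \in M$ equals $\sum \epsilon(m_{(1)}) m_{(2)}$ with $m_{(2)} \in N$, whence $m \in N$ and $M = N$. Conversely, $M = N$ immediately gives $\comod(M) = \comod(N) = \mathcal{L}$. The main technical care is needed in the iteration step, since different parenthesizations of $V_{i_{1}} \otimes \cdots \otimes V_{i_{k}}$ give maps into $M$ that differ by the action of the associator $\Phi$; but $\Phi$ is convolution-invertible and acts by isomorphisms in $\comod(M)$, so this does not affect membership in the tensor-generated subcategory $\mathcal{L}$.
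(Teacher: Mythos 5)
Your argument is correct in substance and rests on the same mechanisms as the paper's proof: the decomposition of $M_1$ into subcomodules of length at most $2$, the fact that the multiplication $m\colon M\otimes M\to M$ is a comodule map (so iterated products of these subcomodules are hit by morphisms from tensor products of length-$2$ objects), and the embedding of an arbitrary finite-dimensional comodule into a cofree one. The packaging differs in two useful ways: the paper exhibits the regular comodule $M$ as a \emph{subobject} of a direct sum of tensor products of length-$2$ comodules via an explicitly defined injective map $F$, whereas you exhibit $N$ (hence $M$, in the forward direction) as a \emph{quotient} of such a direct sum, which sidesteps the well-definedness concerns attached to the paper's explicit formula for $F$; and your reduction of both implications to the single condition $N=M$, via the identification of the tensor subcategory generated by length-$2$ objects with $\comod(N)$, spells out the converse direction that the paper treats rather tersely (its subquotient argument implicitly uses exactly your coefficient-coalgebra reasoning). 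One step is not justified as written: from $S(M_1)\subseteq M_1$ you cannot conclude $S(N)\subseteq N$, because the antipode of a coquasi-Hopf algebra is a coalgebra antimorphism but \emph{not} an algebra antihomomorphism (it is anti-multiplicative only up to convolution with functionals built from $\Phi$, $\alpha$, $\beta$). This flaw is harmless here: the dual of a comodule of length at most $2$ again has length at most $2$, so the generating class is already closed under duality and closure of $\comod(N)$ under duals is never actually needed (and the forward direction uses no duals at all, since every finite-dimensional comodule is a subobject of a cofree one). With that step removed or repaired along these lines, your proof is sound.
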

\begin{proof}
Suppose that $M$ is generated by grouplike and skew-primitive elements. Let $G=G(M)$, and $\{X_i|1\leq i\leq n\}$ a maximal linear independent set of skew-primitive elements. It is obvious that $(gX)g^{-1}$ is a skew-primitive element if $X$ is. So each element in $M$ can be presented as a linear combination of elements of the form $g(\cdots (X_{i_1}X_{i_2})\cdots X_{i_m})$.

Let $\mathcal{A}=\{g(\cdots (X_{i_1}X_{i_2})\cdots X_{i_m})|g\in G, (i_1,i_2,\cdots ,i_m)\in \mathcal{I}\}$ be a minimal set that cogenerates the cofree comodule $M$.
Let $V_i=\k\{g_i,X_i\},$ where $g_i$ satisfies $\delta_L(X_i)=g_i \otimes X_i, \ 1\leq i\leq n$. Let $V(g)=\k\{1, g\}$ for $g\in G$. Then it is obvious that $V_i, V(g)$ are subcomodules of $M$ of length $2$.
Let
\begin{eqnarray*}
F:M\To \oplus_{g\in G,(i_1,i_2,\cdots,i_m)\in \mathcal{I}}V(g)\otimes(\cdots (V_{i_1}\otimes V_{i_2})\otimes \cdots \otimes V_{i_m})
\end{eqnarray*}
be the linear map determined by
\begin{equation}
g(\cdots (X_{i_1}X_{i_2})\cdots X_{i_m})\to g\otimes (\cdots (X_{i_1}\otimes X_{i_2})\otimes \cdots \otimes X_{i_m}).
\end{equation}
It is obvious that $F$ is an injective comodule map. So we have proved the cofree comodule $M$ is tensor generated by objects of length $2$. This implies that each cofree comodule is tensor generated by objects of length $2$. As any finite-dimensional comodule of $M$ is a subcomodule of a cofree comodule, so $\comod(M)$ is tensor generated by objects of length $2$.

Conversely, suppose $\comod(M)$ is tensor generated by objects of length $2$. For each $g \in G$, by $S_g$ we mean a simple object such that $\delta(v)=g\otimes v$ for all $v\in S_g$. Since $\comod(M)$ is pointed, each object of length $2$ is an extension of $S_g$ by $S_h$ for some $g,h\in G$. So an object of length $2$ must be of the form $S_g\oplus S_h$, or $V=\k\{h,X\}$ where $X$ is a $g$-$h$-primitive element. Let $V(g), V_i, g\in G, 1\leq i\leq n$ be objects in $\comod(M)$ defined as the first part of the proof. Then the cofree comodule $M$ is a subquotient of object of the form
 $$\oplus V(g)\otimes(\cdots (V_{i_1}\otimes V_{i_2})\otimes \cdots \otimes V_{i_m})$$
according to the hypothesis. This implies that $M$ is generated by grouplike and skew-primitive elements.
\end{proof}

\begin{theorem}\label{t5.10}
Suppose that $R=\oplus_{i\geq 0} R[i]$ is a finite-dimensional connected coradically graded braided Hopf algebra in $_{\k G}^{\k G} \mathcal{YD}^\Phi$,  where $G$ is an abelian group of odd order and $\Phi$ is a $3$-cocycle on $G.$ Then $R=B(R[1])$.
\end{theorem}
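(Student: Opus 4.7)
The plan is to reduce to Proposition \ref{p3.14} by verifying that $\Phi|_{G_{R[1]}}$ is an abelian $3$-cocycle on $G_{R[1]}$; once this is established, Proposition \ref{p3.14} applies directly to yield $R=B(R[1])$. Set $V:=R[1]$, which is a finite-dimensional object of $_{\k G}^{\k G}\mathcal{YD}^\Phi$.

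The first step is to realize the Nichols algebra $B(V)$ as a subalgebra of $R$, so that finite-dimensionality transfers. Let $A$ denote the subalgebra of $R$ generated by $V$. Since $R$ is coradically graded and connected, the graded comultiplication gives $\D(R[n])\subseteq \bigoplus_{i+j=n}R[i]\otimes R[j]$, whence $V\subseteq P(R)$; conversely $P(R)\subseteq R_1=\k\oplus R[1]$ combined with $\e(V)=0$ gives $P(R)\subseteq V$. Hence $A$ is a graded Hopf subalgebra of $R$ with $A[0]=\k$, $A[1]=V$, generated by $V$, and $P(A)\subseteq P(R)\cap A=V$, so $P(A)=V$. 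By the standard universal characterization of Nichols algebras in a braided category --- a connected graded braided Hopf algebra generated in degree $1$ whose space of primitives coincides with its degree $1$ component must be isomorphic to $B(V)$ --- we conclude $A\cong B(V)$, and in particular $\dim B(V)\leq \dim R<\infty$.

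Now suppose for contradiction that $\Phi|_{G_V}$ is not an abelian $3$-cocycle on $G_V$. Since $|G|$ is odd, Corollary \ref{c4.9} would force $B(V)$ to be infinite-dimensional, contradicting the previous paragraph. Therefore $\Phi|_{G_V}$ is an abelian $3$-cocycle on $G_V$, and Proposition \ref{p3.14} gives $R=B(R[1])$.

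The main obstacle is the identification $A\cong B(V)$. The standard way to confirm it is to show that $A$ admits no nonzero graded Hopf ideal concentrated in degrees $\geq 2$: if $L$ were such an ideal, a nonzero element $x\in L$ of minimal degree would have reduced coproduct whose tensor factors lie in strictly smaller positive degrees, and so lie outside $L$; combined with $\D(L)\subseteq L\otimes A+A\otimes L$, this forces $x$ to be primitive, contradicting $P(A)=V=A[1]$. Consequently the surjection $T_\Phi(V)\twoheadrightarrow A$ factors through the maximal graded Hopf ideal of $T_\Phi(V)$ generated in degrees $\geq 2$, identifying $A$ with $B(V)$ as required.
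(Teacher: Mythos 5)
Your proof is correct and follows the same route as the paper: embed $B(R[1])$ into $R$ to conclude it is finite-dimensional, invoke Corollary \ref{c4.9} to see that $\Phi|_{G_{R[1]}}$ must be an abelian $3$-cocycle, and then apply Proposition \ref{p3.14}. The only difference is that you spell out in detail the standard fact that the subalgebra generated by the primitives $R[1]$ is a copy of the Nichols algebra $B(R[1])$, which the paper simply records as the existence of a canonical injection $\iota\colon B(R[1])\to R$.
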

\begin{proof}
Since $R=\oplus_{i\geq 0} R[i]$ is a connected coradically graded braided Hopf algebra, $R[0]=\k 1$ and $R[1]$ is the set of primitive elements of $R$. So there exists a canonical injective linear map $\iota \colon B(R[1])\to R$. Since $R$ is finite-dimensional, $B(R[1])$ is finite-dimensional. By Corollary \ref{c4.9}, we have that $\Phi|_{G_{R[1]}}$ is an abelian $3$-cocycle on $G_{R[1]}$. Hence $R=B(R[1])$ according to Theorem \ref{p3.14}.
\end{proof}

\noindent \textbf{Proof of Theorem \ref{t4.1}.}
Suppose that $\mathcal{C}$ is a pointed finite tensor category with $G(\mathcal{C})$ an abelian group of odd order. Then there exists a finite-dimensional pointed coquasi-Hopf algebra $M$ such that $\mathcal{C}$ is tensor equivalent to $\comod(M)$. So we only need to prove that $\comod(M)$ is tensor generated by objects of length $2$. By Proposition \ref{p5.8}, this amounts to proving that $M$ is generated by grouplike and skew-primitive elements. It is obvious that $M$ is generated by grouplike and skew-primitive elements if and only if $\gr(M)$ is so. Let $G=G(M)=G(\mathcal{C})$, $R$ the coinvariant subalgebra of $\gr(M)$. So $R$ is a finite-dimensional coradically graded braided Hopf algebra in
$_{\k G}^{\k G}\mathcal{YD}^\Phi$, where $\Phi$ is the associator of $\gr(M)$, which is actually a $3$-cocycle on $G$.
By Theorem \ref{t5.10}, $R=B(R[1])$ is a Nichols algebra. Hence $\gr(M)$ is generated by grouplike and skew-primitive elements since $\gr(M)=R\# \k G(M)$
by Lemma \ref{l2.7}. \hfill $\Box$

\subsection{The classification results}
With the help of Theorem \ref{t5.10}, we achieve the classification of coradically graded pointed coquasi-Hopf algebras and that of pointed finite tensor categories over abelian groups of odd order.

We need some new notions to present the main result. Let $\Delta_{\chi,E}$ be an arithmetic root system. For each positive root $\alpha\in \Delta$, define
$q_\alpha=\chi(\alpha,\alpha)$.
Then the height of $\alpha$ is defined by
\begin{equation}
\operatorname{ht}(\alpha)=\left\{
              \begin{array}{ll}
                |q_\alpha|, & \hbox{if $q_\alpha\neq 1$ is a root of unity;} \\
                \infty, & \hbox{otherwise.}
              \end{array}
            \right.
\end{equation}
A function $\chi:G\To \k^*$ is called a {\bf quasi-character} associated to a 2-cocycle $\omega$ on $G$ if for all $f,g\in G,$
\begin{equation}\label{3.27}
\chi(f)\chi(g)=\omega(f,g)\chi(fg),\ \ \ \chi(1)=1.
\end{equation}

It is clear that there is a quasi-character associated to $\omega$ if and only if $\omega$ is symmetric. Recall that for a fixed 3-cocycle $\Phi$ on $G$,
$\{\widetilde{\Phi}_g|g\in G\}$ gives $2$-cocycles on $G$.

\begin{definition}
Let $\chi_1,\cdots,\chi_n$ be quasi-characters of $G$ associated to $\widetilde{\Phi}_{g_1},\cdots ,\widetilde{\Phi}_{g_n}$ respectively.
We say the series $(\chi_1,\cdots,\chi_n)$ is of finite type if there is an arithmetic root system $\Delta_{\chi,E}$ of rank $n$ such that:
\begin{itemize}
\item
$\chi_i(g_j)\chi_j(g_i)=q_{ij}q_{ji},\ \ \chi_i(g_i)=q_{ii}$
for all $1\leq i,j \leq n.$ Here $q_{ij}=\chi(e_i,e_j)$ for $e_i,e_j\in E$.
\item $\operatorname{ht}(\alpha)<\infty$ for all $\alpha\in \Delta$.
\end{itemize}
\end{definition}

For a series of quasi-characters $(\chi_1,\cdots,\chi_n)$ of finite type associated to $\widetilde{\Phi}_{g_1},\cdots ,\widetilde{\Phi}_{g_n}$, we can attach to it a twisted Yetter-Drinfeld module $V(\chi_1,\cdots,\chi_n)$ with a canonical basis $\{X_1,\cdots, X_n\}$ such that $g_i\triangleright X_j=\chi_j(g_i)X_j$ and $\delta_L(X_i)=g_i\otimes X_i$ for all $1\leq i,j\leq n$.

\begin{theorem}
Let $G$ be a finite abelian group of odd order, $\Phi$ a $3$-cocycle on $G$.

\begin{itemize}
\item[(1)]  If $(\chi_1,\cdots,\chi_n)$ is a series of quasi-characters of finite type associated to the 2-cocycles $\widetilde{\Phi}_{g_1},\cdots ,\widetilde{\Phi}_{g_n}$, then $B(V(\chi_1,\cdots,\chi_n))$ is a finite-dimensional Nichols algebra in $_{\k G}^{\k G} \mathcal{YD}^\Phi$.
\item[(2)] Suppose that $\mathcal{C}$ is a coradically graded pointed finite tensor category such that $G(\mathcal{C})=G$ and the associator is $\Phi$. Then there exists a series of quasi-characters $(\chi_1,\cdots,\chi_n)$ of finite type associated to $\widetilde{\Phi}_{g_1},\cdots ,\widetilde{\Phi}_{g_n}$ such that $$\mathcal{C}\cong \comod( B(V(\chi_1,\cdots,\chi_n))\#\k G).$$
\end{itemize}
\end{theorem}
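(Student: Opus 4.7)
I would treat the two parts separately; both rest on the machinery of Section 3 together with Heckenberger's classification of finite-dimensional diagonal Nichols algebras.

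\emph{Part (1): finite-dimensionality of $B(V)$ for $V:=V(\chi_1,\ldots,\chi_n)$.} Because each $\chi_j$ is a quasi-character for $\widetilde{\Phi}_{g_j}$, relation \eqref{3.27} makes the line $\k X_j$ a $1$-dimensional sub-Yetter-Drinfeld module of $V$, so $V$ is of diagonal type. Lemma \ref{l3.8} then gives that $\Psi:=\Phi|_{G_V}$ is abelian, and Corollary \ref{c3.5} lets me replace $V$ by $U \in {_{\k G_V}^{\k G_V}\mathcal{YD}^\Psi}$ with $B(V)\cong B(U)$. Applying Proposition \ref{p4.11}, there is a $2$-cochain $J$ (via Proposition \ref{p3.9}) such that $B(\widetilde U)^{J^{-1}}\cong B(\widetilde U^{J^{-1}})$ is a usual Nichols algebra of diagonal type in an ordinary Yetter-Drinfeld category (Lemma \ref{l3.6}). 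By \eqref{3.4} the twist preserves the diagonal entries $q_{ii}$ and the off-diagonal products $q_{ij}q_{ji}$, so the structure constants of this usual Nichols algebra match those of the arithmetic root system $\Delta_{\chi,E}$ supplied by the finite-type hypothesis. The requirement $\operatorname{ht}(\alpha)<\infty$ for all $\alpha\in\Delta$ is exactly what, on top of an arithmetic root system, guarantees finite-dimensionality of a usual diagonal Nichols algebra (Heckenberger \cite{H4}). Since twists and the intervening isomorphisms preserve dimension, $B(V)$ is finite-dimensional.

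\emph{Part (2): producing the quasi-character series.} By \cite{EO}, $\mathcal{C}$ is tensor equivalent to $\comod(M)$ for some finite-dimensional pointed coquasi-Hopf algebra $M$; the coradically graded hypothesis lets me take $M=\gr M$ with $M_0=(\k G,\Phi)$. Lemma \ref{l2.7} yields $M\cong R\#\k G$ with $R=\bigoplus_{i\ge 0}R[i]$ a finite-dimensional connected coradically graded Hopf algebra in $_{\k G}^{\k G}\mathcal{YD}^\Phi$. Since $|G|$ is odd, Theorem \ref{t5.10} gives $R=B(R[1])$, and Corollary \ref{c4.9} forces $\Phi|_{G_{R[1]}}$ to be abelian, so $R[1]$ is of diagonal type. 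Pick a canonical basis $\{X_1,\ldots,X_n\}$ of $R[1]$ with $\delta_L(X_j)=g_j\otimes X_j$. The $G$-action on the line $\k X_j$ is, by \eqref{eq2.6}, a $1$-dimensional projective representation with $2$-cocycle $\widetilde{\Phi}_{g_j}$, so defining $\chi_j(g)$ as the eigenvalue of $g$ on $X_j$ produces a quasi-character $\chi_j:G\to\k^*$ associated to $\widetilde{\Phi}_{g_j}$, and $R[1]\cong V(\chi_1,\ldots,\chi_n)$ by construction. Because $B(R[1])$ is finite-dimensional and diagonal, Proposition \ref{p3.13} furnishes an arithmetic root system $\Delta_{\chi,E}$ with $\chi(e_i,e_j)=\chi_j(g_i)$; finite-dimensionality additionally forces $\operatorname{ht}(\alpha)<\infty$ for every $\alpha$. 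Hence $(\chi_1,\ldots,\chi_n)$ is of finite type and $\mathcal{C}\cong\comod(B(V(\chi_1,\ldots,\chi_n))\#\k G)$.

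\emph{Main obstacle.} The one substantive technical input is the equivalence, for usual diagonal Nichols algebras, between being finite-dimensional and possessing an arithmetic root system in which every positive root has finite height; this is Heckenberger's classification and is invoked in both directions (as the sufficient condition in Part (1) and as a necessary condition in Part (2)). Everything else amounts to careful bookkeeping: the support-group reduction (Corollary \ref{c3.5}), the base-group change (Proposition \ref{p4.11}), and the $2$-cochain twist (Lemma \ref{l3.6}) must be composed coherently so that the structure-constant data $(\chi_j(g_i))$ survives intact.
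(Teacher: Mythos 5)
Your proposal is correct and follows essentially the same route as the paper: part (1) is exactly the content of Proposition \ref{p3.13} (whose proof you unpack via Corollary \ref{c3.5}, Proposition \ref{p4.11}, Lemma \ref{l3.6} and Heckenberger's classification, with the finite-height condition supplying finite-dimensionality), and part (2) is the paper's argument via Lemma \ref{l2.7} and Theorem \ref{t5.10} with the quasi-characters read off from the diagonal module $R[1]$. Your version merely supplies details the paper leaves implicit, so there is nothing substantive to add.
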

\begin{proof}
\begin{itemize}
\item[(1)] Direct consequence of Proposition \ref{p3.13}.
\item[(2)] As $\mathcal{C}$ is a coradically graded pointed finite tensor category, there is a finite-dimensional coradically graded pointed coquasi-Hopf algebra $M$ over $G$ such that $\mathcal{C}\cong \comod(M)$. Let $R$ be the coinvariant subalgebra of $M$. By Theorem \ref{t5.10},  $R=B(R[1])$ is a finite-dimensional Nichols algebra of diagonal type in $_{\k G}^{\k G}\mathcal{YD}^\Phi$. So there exists a series of quasi-characters $\chi_1,\cdots,\chi_n$ of finite type associated to $\widetilde{\Phi}_{g_1},\cdots ,\widetilde{\Phi}_{g_n}$ such that $R\cong B(V(\chi_1,\cdots,\chi_n)).$ Hence we have $\mathcal{C} \cong \comod(M)\cong \comod(B(V(\chi_1,\cdots,\chi_n))\#\k G).$
\end{itemize}
\end{proof}

\section{Further examples and problems}
%\subsection{}
So far, we have clarified the Nichols algebras of simple twisted Yetter-Drinfeld modules over an arbitrary finite abelian group. The main idea is: \emph{by considering the support group of a simple Yetter-Drinfeld module, one may transform a nondiagonal twisted Yetter-Drinfeld module to a diagonal one.}  Then the previously developed related theories, see e.g. \cite{H0, H4, HLYY, HLYY2}, can be fully applied.  Remarkably, this is already enough to help us classify the finite-dimensional coradically graded pointed coquasi-Hopf algebras over abelian groups of odd order.

%\subsection{}
To move on, we should study pointed finite tensor categories $\mathcal{C}$ over abelian groups with $2 \mid G(\mathcal{C}).$ The critical step is a thorough investigation of the Nichols algebras of nonsimple nondiagonal twisted Yetter-Drinfeld modules. Thanks to Proposition \ref{p4.19}, we may applied the same strategy to Nichols algebras of form $B(V_1 \oplus V_2),$ where $V_1$ and $V_2$ are nondiagonal simples. Thus, we will have a complete classification based on our previous work \cite{HLYY2}. As this will not provide further valuable insights than what we have done in Sections 3 and 4, we do not include a detailed discussion about this. Instead, we provide several simple examples in the following to elucidate the process.

Let $G=\Z_2\times \Z_2\times \Z_2=\langle g_1\rangle\times \langle g_2\rangle \times \langle g_3\rangle$, $\Phi$ a nonabelian $3$-cocycle on $G$ given by
$$\Phi(g_1^{i_1}g_2^{i_2}g_3^{j_3},g_1^{j_1}g_2^{j_2}g_3^{i_3},g_1^{k_1}g_2^{k_2}g_3^{k_3})=(-1)^{i_3j_2k_1}.$$
Consider three simple objects $V_1,V_2,V_3$  in $_{\k G}^{\k G}\mathcal{YD}^\Phi$ given as follows:
\begin{itemize}
\item $V_1=\k \{X_1,X_2\}$, $g_{V_1}=g_1$, $g_1\triangleright X_1=-X_1,$  $ g_1\triangleright X_2=-X_2,$ $g_2\triangleright X_1=X_1,$ $ g_2\triangleright X_2=-X_2,$ $ g_3\triangleright X_1=X_2, $ $g_3\triangleright X_2=X_1$.
\item $V_2=\k \{Y_1,Y_2\}$,  $g_{V_2}=g_2$,  $g_1\triangleright Y_1=Y_1,$  $g_1\triangleright Y_2=-Y_2,$ $g_2\triangleright Y_1=-Y_1,$ $g_2\triangleright Y_2=-Y_2,$  $g_3\triangleright Y_1=Y_2,$  $g_3\triangleright Y_2=Y_1$.
\item $V_3=\k \{Z_1,Z_2\}$,  $g_{V_3}=g_3$,  $g_1\triangleright Z_1=Z_2,$ $g_1\triangleright Z_2=Z_1,$ $g_2\triangleright Z_1=Z_1,$ $g_2\triangleright Z_2=-Z_2,$ $g_3\triangleright Z_1=-Z_1,$ $g_3\triangleright Z_2=-Z_2$.
\end{itemize}

Note that $V_1$ is isomorphic to the twisted Yetter-Drinfeld module $V$ given in Example \ref{e3.19}. Similarly, one can verify that $V_2, V_3$ are indeed simple twisted Yetter-Drinfeld modules in $_{\k G}^{\k G}\mathcal{YD}^\Phi$ with the help of Proposition \ref{p2.4}. Note that $V_1,V_2,V_3$ satisfy the condition $\mathrm{C1}$ of Proposition \ref{p4.23}. Hence $B(V_1),B(V_2)$ and $B(V_3)$ are finite-dimensional Nichols algebras. In additon, we have the following observation.

\begin{proposition}
The Nichols algebras $B(V_1\oplus V_2),B(V_1\oplus V_3)$ and $B(V_2\oplus V_3)$ are finite-dimensional.
\end{proposition}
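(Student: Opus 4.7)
The plan is to apply Proposition \ref{p4.19} to each $B(V_i\oplus V_j)$, reducing it to an ordinary Nichols algebra of diagonal type over the support group, and then to verify finite-dimensionality via Proposition \ref{p3.13} and Heckenberger's classification \cite{H4}.

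First, since $g_{V_\ell}=g_\ell$ for $\ell=1,2,3$, in each case the support group is
\[ G_{V_1\oplus V_2}=\langle g_1,g_2\rangle,\quad G_{V_1\oplus V_3}=\langle g_1,g_3\rangle,\quad G_{V_2\oplus V_3}=\langle g_2,g_3\rangle,\]
all isomorphic to $\Z_2\times \Z_2$. By Proposition \ref{p4.18} all $3$-cocycles on such a group are abelian; in fact a direct inspection of the defining formula $\Phi(a,b,c)=(-1)^{i_3j_2k_1}$ shows that $\Phi|_H$ is pointwise trivial in each case, since the exponent vanishes as soon as one of $g_1,g_2,g_3$ is absent from $H$. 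Thus Proposition \ref{p4.19} reduces each $B(V_i\oplus V_j)$ to a diagonal Nichols algebra $B(V')$ in the untwisted category $_{\k H}^{\k H}\mathcal{YD}$, and in particular finite-dimensionality is preserved under this reduction.

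Next, I would diagonalize the $H$-action to exhibit $V'$ as a direct sum of four $1$-dimensional YD-modules. The $g_\ell$-actions that are already diagonal in the given bases remain untouched, while the swap actions ($g_3$ on $V_1$, $V_2$ and $g_1$ on $V_3$) are diagonalized by passing to the eigenbasis $\{X_1\pm X_2\}$, $\{Y_1\pm Y_2\}$, $\{Z_1\pm Z_2\}$ respectively; all resulting eigenvalues lie in $\{\pm 1\}$. A direct bookkeeping shows that after relabeling, in each of the three cases the rank-$4$ diagonal braiding has diagonal entries $q_{aa}=-1$ for $1\le a\le 4$ and the off-diagonal products $\widetilde{q}_{ab}=q_{ab}q_{ba}$ equal $-1$ on exactly two disjoint pairs of indices (being $1$ otherwise). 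Consequently, the generalized Dynkin diagram of $V'$ is the disjoint union
\[ {\setlength{\unitlength}{1mm}\Dchaintwo{}{$-1$}{$-1$}{$-1$}} \quad\sqcup\quad {\setlength{\unitlength}{1mm}\Dchaintwo{}{$-1$}{$-1$}{$-1$}}.\]

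This is the disjoint union of two copies of the standard Cartan type $A_2$ arithmetic root system with $q=-1$, which is itself arithmetic (its Weyl groupoid is the product of the two component groupoids, hence full and finite) and appears in Heckenberger's classification. By Proposition \ref{p3.13}, $B(V')$, and hence $B(V_i\oplus V_j)$, is finite-dimensional. The only nontrivial step is the diagonalization of the swap actions and the verification that all three cases collapse to the same braiding matrix; both are elementary because the support groups have order $4$ and all relevant eigenvalues lie in $\{\pm 1\}$.
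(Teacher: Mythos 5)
Your proposal is correct and follows essentially the same route as the paper: restrict to the support group $\Z_2\times\Z_2$, reduce to a Nichols algebra of diagonal type, diagonalize the swap actions (as the paper does with $X_1\pm X_2$, $Z_1\pm Z_2$ for $B(V_1\oplus V_3)$), compute the generalized Dynkin diagram -- the same two disjoint components with all vertex labels $-1$ and edge label $-1$ -- and conclude via Heckenberger's classification. The only difference is cosmetic: you note that $\Phi|_H$ is identically trivial on each rank-two support group, which lets you skip the paper's lifting to the bigger group and the $2$-cochain twist $J$ used for $B(V_1\oplus V_2)$ (whose $J$-factors cancel in the products $q_{ij}q_{ji}$ anyway), and your bookkeeping of the braiding matrix in all three cases checks out.
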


\begin{proof}
Firstly we will show that $B(V_1\oplus V_2)$ is finite-dimensional. Since the support group of $V_1\oplus V_2$ is $G_1:=\langle g_1\rangle \times \langle g_2\rangle$, we have $\Phi|_{G_1}$ is an abelian $3$-cocycle on $G_1$ by proposition \ref{p2.26}. In the following we use the notations of Subsection \ref{sub3.2}. Let $\mathbbm{G}_1=\langle \mathbbm{g}_1\rangle\times \langle \mathbbm{g}_2\rangle$ and $\pi_1^*:\mathbbm{G}_1\to G_1$ be the group epimorphism given by $\pi_1^*(\mathbbm{g}_1)=g_1,\pi_1^*(\mathbbm{g}_2)=g_2$, and $V=V_1\oplus V_2$. By Lemma \ref{l4.9} and Proposition \ref{p3.9}, $B(V)\in {_{\k \mathbbm{G}_1}^{\k \mathbbm{G}_1}\mathcal{YD}^{\pi_1^*(\Phi|_{G_1})}}.$ As a consequence,  there is a $2$-cochain $J$ on $\mathbbm{G}_1$ such that $B(V)^J=B(V^J)\in {_{\k \mathbbm{G}_1}^{\k \mathbbm{G}_1}\mathcal{YD}}$. According to \eqref{3.4}, we have:
\begin{eqnarray*}
&& \mathbbm{g}_1\triangleright_JX_1=-X_1, \quad \mathbbm{g}_1\triangleright_JX_2=-X_2,\\
&& \mathbbm{g}_1\triangleright_JY_1=\frac{J(\mathbbm{g}_1,\mathbbm{g}_2)}{J(\mathbbm{g}_2,\mathbbm{g}_1)}Y_1, \quad \mathbbm{g}_1\triangleright_JY_2=-\frac{J(\mathbbm{g}_1,\mathbbm{g}_2)}{J(\mathbbm{g}_2,\mathbbm{g}_1)}Y_2,\\
&&\mathbbm{g}_2\triangleright_JX_1=\frac{J(\mathbbm{g}_2,\mathbbm{g}_1)}{J(\mathbbm{g}_1,\mathbbm{g}_2)}X_1, \quad
\mathbbm{g}_2\triangleright_JX_2=-\frac{J(\mathbbm{g}_2,\mathbbm{g}_1)}{J(\mathbbm{g}_1,\mathbbm{g}_2)}X_2,\\
&& \mathbbm{g}_2\triangleright_jY_1=-Y_1, \quad \mathbbm{g}_2\triangleright_jY_2=-Y_2.
\end{eqnarray*}
So the generalized Dynkin diagram corresponding to $B(V^J)$ is
\[ {\setlength{\unitlength}{1mm}
\Dchaintwo{}{$-1$}{$-1$}{$-1$}\ \ \ \ \ \
\Dchaintwo{}{$-1$}{$-1$}{$-1$}} \quad .\] This implies that $B(V^J)$ is finite-dimensional according to \cite[Table 1 of Section 3]{H4}. So $B(V_1\oplus V_2)$ is finite-dimensional.

 Next we will prove that $B(V_1\oplus V_3)$ is finite-dimensional. Let $R_1=X_1+X_2, \ R_2=X_1-X_2, \ S_1=Y_1+Y_2, \ S_2=Y_1-Y_2$. Then we have:
 \begin{eqnarray*}
 &&g_1\triangleright R_1=-R_1, \ g_1\triangleright R_2=-R_2, \ g_3\triangleright R_1=R_1, \ g_3\triangleright R_2=-R_2,\\
 &&g_1\triangleright S_1=S_1, \ g_1\triangleright S_2=-S_2, \ g_3\triangleright S_1=-S_1, \ g_3\triangleright S_2=-S_2.
 \end{eqnarray*}
 Let $G_2=\langle g_1\rangle \times \langle g_3\rangle$. Then $B(V_1\oplus V_3)$ is a Nichols algebra of diagonal type in $_{\k G_2}^{\k G_2}\mathcal{YD}^{\Phi|_{G_2}}$. The remaining steps of the proof are Similar to that of $B(V_1\oplus V_2)$. One can show that $B(V_1\oplus V_3)$ is twist equivalent to an ordinary Nichols algebra of diagonal type corresponding to the same generalized Dynkin diagram of $B(V_1\oplus V_2)$. Hence $B(V_1\oplus V_3)$ is finite-dimensional.

Similarly, one can show that the Nichols algebra $B(V_2\oplus V_3)$ is finite-dimensional as the previous two cases. We complete the proof of the proposition.
\end{proof}

%\subsection{}
For more general situation, the main challenge is the study of Nichols algebra $B(V)$ where $V$ has at least $3$ nondiagonal simple Yetter-Drinfeld summands over its support group $G_V.$ In our opinion, this is the truly new phenomenon for pointed finite tensor categories over finite abelian groups with nonabelian $3$-cocycles as associators. The main difficulty lies in a lack of tools for the investigation of nondiagonal Nichols algebras in twisted Yetter-Drinfeld categories. Moreover, it is also much more difficult to prove EGNO's conjecture in this situation. However, for the Hopf case there is already a nice theory for nondiagonal Nichols algebras developed by Andruskiewitsch, Heckenberger and Schneider in \cite{AHS}. We wish to extend this theory to quasi-Hopf case and pursue a complete classification of pointed finite tensor categories over abelian groups in the future works.

\vskip 5pt


\begin{thebibliography}{99}

\bibitem{AHS}
Andruskiewitsch, Nicol\'as; Heckenberger, Istv\'an; Schneider, Hans-J\"urgen: \emph{The Nichols algebra of a semisimple Yetter-Drinfeld module.} Amer. J. Math. \textbf{132} (2010), no. 6, 1493-1547.

\bibitem{AS}
Andruskiewitsch, Nicol\'as; Schneider, Hans-J\"urgen: \emph{finite quantum groups and Cartan matrices}. Adv. Math \textbf{154} (2000), 1-45.

\bibitem{AS2}
Andruskiewitsch, Nicol\'as; Schneider, Hans-J\"urgen: \emph{On the classification of finite-dimensional pointed Hopf algebras}. Ann. of Math. (2) \textbf{171} (2010), no. 1, 375-417.

%\bibitem{A}
%Angiono, Iv\'an Ezequiel: \emph{Basic quasi-Hopf algebras over cyclic groups}. Adv. Math. \textbf{225} (2010), no. 6, 3545-3575.

%\bibitem{D}
%Drinfeld, Vladimir G.: \emph{Quantum groups}. Proceedings of the International Congress of Mathematicians, Vol. \textbf{1}, (Berkeley, Calif., 1986), 798-820, Amer. Math. Soc., Providence, RI, 1987.
%
%\bibitem{Dr}
%Drinfeld, Vladimir G.: \emph{Quasi-Hopf algebras}. (Russian) Algebra i Analiz \textbf{1} (1989), no. 6, 114-148; translation in Leningrad Math. J. \textbf{1} (1990), no. 6, 1419-1457.
%
%\bibitem{EM}
%Eilenberg, Samuel; MacLane, Saunders: \emph{Cohomology theory of Abelian groups and homotopy theory}. I. Proc. Nat. Acad. Sci. U. S. A. \textbf{36}, (1950). 443-447.

%\bibitem{EG1}
%Etingof, Pavel; Gelaki, Shlomo: \emph{Finite-dimensional quasi-Hopf algebras with radical of codimension 2}. Math. Res. Lett. \textbf{11} (2004), no. 5-6, 685-696.
%
%\bibitem{EG2}
%Etingof, Pavel; Gelaki, Shlomo: \emph{On radically graded finite-dimensional quasi-Hopf algebras}. Mosc. Math. J. \textbf{5} (2005), no. 2, 371-378.
%
%\bibitem{EG3}
%Etingof, Pavel; Gelaki, Shlomo: \emph{Liftings of graded quasi-Hopf algebras with radical of prime codimension}. J. Pure Appl. Algebra \textbf{205} (2006), no. 2, 310-322.
%
%\bibitem{EG4}
%Etingof, Pavel; Gelaki, Shlomo: \emph{The small quantum groups as a quantum double}. J. Algebra \textbf{322} (2009), no. 7, 2580-2585.

\bibitem{EGNO}
Etingof, Pavel; Shlomo, Gelaki; Nikshych, Dimitri; Ostrik, Victor: \emph{Tensor categories}. Mathematical Surveys and Monographs, Volume \textbf{205}, 2015.

\bibitem{EO}
Etingof, Pavel; Ostrik, Victor: \emph{Finite tensor categories.} Mosc. Math. J. \textbf{4} (2004), no. 3, 627-654.

%\bibitem{G}
%Gelaki, Shlomo: \emph{Basic quasi-Hopf algebras of dimension $n^3$}.  J. Pure Appl. Algebra \textbf{198} (2005), no. 1-3, 165-174.

\bibitem{H0}
Heckenberger, Istv\'an: \emph{The Weyl groupoid of a Nichols algebra of diagonal type}. Invent. Math. \textbf{164} (2006), no. 1, 175-188.

%\bibitem{H1}
%Heckenberger, Istvan: \emph{Examples of finite-dimensional rank 2 nichols algebras of diagonal type}. Compositio Math. \textbf{143} (2007), 165-190.
%
%\bibitem{H2}
%Heckenberger, Istvan: \emph{Rank 2 Nichols algebras with finite arithmetic root system}. Algebr. Represent. Theory \textbf{11} (2008), no. 2, 115-132.

\bibitem{H4}
Heckenberger, Istv\'an: \emph{Classification of arithmetic root systems}. Adv. Math. \textbf{220} (2009), no. 1, 59-124.

\bibitem{HLYY}
Huang, Hua-Lin; Liu, Gongxiang; Yang, Yuping; Ye, Yu: \emph{Finite quasi-quntum groups of rank two}. arXiv:1508.04248v1.

\bibitem{HLYY2}
Huang, Hua-Lin; Liu, Gongxiang; Yang, Yuping; Ye, Yu: \emph{Finite quasi-quntum groups of diagonal type}. J. Reine. Angew. Math., to appear.

\bibitem{HY}
Huang, Hua-Lin; Yang, Yuping: \emph{Quasi-quantum linear spaces}. J. Noncommut. Goem. 9 (2015), 1227-1259

%\bibitem{L}
%George, Lusztig: \emph{Finite-dimensional Hopf algebras arising from quantized universal enveloping algebra}. J. Amer. Math. Soc. 3 (1990), 257-296
%
%\bibitem{L1}
%George, Lusztig: \emph{Quantum groups at root of $1$}. Geom. Dedicata 35 (1990), 89-113.
%
%\bibitem{js}
%Joyal, Andr\'e; Street, Ross: \emph{Braided tensor categories}. Adv. Math. \textbf{102} (1993), no. 1, 20-78.

\bibitem{majid}
Majid, Shahn: \emph{Algebras and Hopf algebras in braided categories}. Advances in Hopf algebras (Chicago, IL, 1992), 55-105, Lecture Notes in Pure and Appl. Math., \textbf{158}, Dekker, New York, 1994.

\bibitem{m3}
Majid, Shahn: \emph{Quantum double for quasi-Hopf algebras}. Lett. Math. Phys. \textbf{45} (1998), no. 1, 1-9.

%\bibitem{MN}
%Mason, Geoffrey; Ng, Siu-Hung: \emph{Group cohomology and gauge equivalence of some twisted quantum doubles}. Trans. Amer. Math. Soc. \textbf{353} (2001), no. 9, 3465-3509

%\bibitem{Mon}
%Montgomery, Susan: {\tt Hopf algebras and their actions on rings}. CBMS Lecture Notes \textbf{82}, Amer. Math. Soc., 1993.

\end{thebibliography}
\end{document}